\documentclass[a4paper,reqno]{paper}




\usepackage[british]{babel}
\usepackage{mathptmx}
\DeclareMathAlphabet{\mathcal}{OMS}{cmsy}{m}{n}
\usepackage{courier}
\usepackage{amssymb}
\usepackage{hyperref}
\usepackage{mathtools}
\usepackage{amsthm}
\usepackage{enumerate}
\usepackage{enumitem}
\usepackage{tikz}
\usepackage{nicefrac}
\usepackage{extarrows}
\newcommand{\dotabovearrow}{
   \mathrel{\ooalign{\hss\raise.95ex\hbox{\scalebox{1.35}{\normalfont .}}%
   \kern0.35ex\hss\cr$\rightarrow$}}}
\newcommand{\dotbelowarrow}{
   \mathrel{\ooalign{\hss\raise-.07ex\hbox{\scalebox{1.35}{\normalfont .}}%
   \kern0.35ex\hss\cr$\rightarrow$}}}

\newtheorem{theorem}{Theorem}
\newtheorem{proposition}[theorem]{Proposition}
\newtheorem{corollary}[theorem]{Corollary}
\newtheorem{lemma}[theorem]{Lemma}
\newtheorem{definition}{Definition}

\newtheorem*{remark*}{Remark}

\newcommand{\nats}{\mathbb{N}}
\newcommand{\natswith}{\nats_{0}}
\newcommand{\reals}{\mathbb{R}}

\newcommand{\states}{\mathcal{X}}

\newcommand{\lt}{\underline{T}}
\newcommand{\ut}{\overline{T}}

\newcommand{\gambles}{\mathcal{L}}
\newcommand{\gamblesX}{\gambles(\states)} 

\newcommand{\ind}[1]{\mathbb{I}_{#1}}

\newcommand{\rateset}{\mathcal{Q}}
\newcommand{\lrate}{\underline{Q}}
\newcommand{\urate}{\overline{Q}}

\newcommand{\asa}{\Leftrightarrow}
\newcommand{\then}{\Rightarrow}

\newcommand{\norm}[1]{\left\lVert #1 \right\rVert}
\newcommand{\abs}[1]{\left\vert #1 \right\vert}

\bibliographystyle{elsarticle-num}

\begin{document}
\title{Convergence of Imprecise Continuous-Time Markov Chains}
\author{Jasper De Bock}

\maketitle

\begin{abstract}
We study the limit behaviour of a generally non-linear ordinary differential equation whose solution is a superadditive generalisation of a stochastic matrix, and provide necessary and sufficient conditions for this solution to be ergodic, in the sense that it converges to an operator that, essentially, maps functions to constants. In the linear case, the solution of our differential equation is equal to the matrix exponential of an intensity matrix and can then be interpreted as the transition operator of a homogeneous continuous-time Markov chain. 
Similarly, in the generalised non-linear case that we consider, the solution can be interpreted as the lower transition operator of a specific set of non-homogeneous continuous-time Markov chains, called an imprecise continuous-time Markov chain.
In this context, our main result provides a necessary and sufficient condition for such an imprecise continuous-time Markov chain to converge to a unique limiting distribution.
\keywords{Markov chain, continuous-time, imprecise, convergence, limiting distribution, ergodicity, matrix exponential, lower transition operator, lower transition rate operator.}
\end{abstract}


\section{Introduction}



Consider a real-valued $n\times n$ matrix $Q$ and let $T_t$ be a real-valued time-dependent  $n\times n$ matrix such that
\begin{equation*}
\frac{d}{dt}T_t=Q T_t \text{~~for all $t\geq0$}
\end{equation*}\\[-1pt]
and $T_0=I$, with $I$ the $n$-dimensional unit matrix.
The unique solution of this differential equation is then well known to be given by the matrix exponential $e^{Qt}$ of $Q$. If $Q$ is furthermore an intensity matrix---has non-negative off-diagonal elements and rows that sum to zero---then $T_t=e^{Qt}$ will be a stochastic matrix. 
In that case, $T_t$ can be interpreted as the transition operator of a homogeneous continuous-time Markov chain. Indeed, if we identify $\{1,\dots,n\}$ with the state space $\states$ of such a Markov chain and let $Q$ be its transition rate matrix, then for any two states $x,y\in\states$, $T_t(x,y)$ is the probability $P(X_t=y\vert X_0=x)$ of ending up in state $y$ at time $t$, conditional on starting in state $x$ at time zero. 

Rather remarkably, for any transition rate matrix $Q$, the conditional probability $P(X_t=y\vert X_0=x)$ will always converge~\cite[Theorem~II.10.1]{Chung:1967}. However, in general, this limiting value may depend on the initial state $x$. If this is not the case, that is, if there is a probability mass function $P_\infty$ on $\states$ such that
\begin{equation*}
\lim_{t\to+\infty}P(X_t=y\vert X_0=x)=P_\infty(y)
\text{~~for all $y\in\states$,}
\end{equation*}
then the homogeneous continuous-time Markov chain under consideration---or, equivalently, the transition rate matrix $Q$---is said to have a unique limiting distribution~$P_\infty$. From an applied point of view, the existence of such a limiting distribution is clearly important, because it implies that for large enough values of $t$, predicting the current value of $X_t$ does not require any knowledge about its initial values. Hence, we are led the following question: what conditions does $Q$ need to satisfy in order for $P_\infty$ to exist? As it turns out, this question has an elegant answer: the required conditions are relatively easy, and are fully determined by the signs of the components of $Q$; see for example Anderson~\cite{Anderson:1991}.

Our main goal here is to answer a non-linear generalisation of this question, which includes the linear version that we have just discussed as a special case. Basically, the only difference is that the intensity matrix $Q$ is replaced by a lower transition rate operator $\lrate$, which is a non-linear---superadditive---generalisation of an intensity matrix. Much as in the original case, this lower transition rate operator gives rise to a corresponding lower transition operator $\lt_t$, which is a non-linear---superadditive---generalisation of a stochastic matrix. For every real-valued function $f$ on $\states$, $\lt_tf$ is completely determined by the non-linear differential equation
\begin{equation}\label{eq:differentialinintroduction}
\frac{d}{dt}\lt_tf=\lrate \lt_tf \text{~~for all $t\geq0$,}
\end{equation}
with boundary condition $\lt_0f=f$~\cite{Skulj:2015cq}. The aim of this paper is to study the properties of this operator $\lt_t$ and, in particular, its limit behaviour as $t$ approaches infinity. Our main contribution---see Theorem~\ref{theo:finalequivalence}---is a simple necessary and sufficient condition for $\lrate$ to be ergodic, in the sense that for all real-valued functions $f$ on $\states$, $\lim_{t\to+\infty}\lt_tf$ exists and is constant. 

Our motivation for studying this property, and the reason for this papers title, is that $\lt_tf(x)$ can be interpreted as the conditional lower expectation $\underline{\mathbb{E}}(f(X_t)\vert X_0=x)$ of an imprecise continuous-time Markov chain, which, basically, is a set of continuous-time Markov chains whose possibly time-dependent transition rate matrix $Q_t$ is partially specified, in the sense that all that we known about it is that it takes values in some given set of transition rate matrices $\mathcal{Q}$.\footnote{In fact, $\lt_tf$ can also be interpreted as the conditional lower expectation $\underline{\mathbb{E}}(f(X_t)\vert X_0=x)$ that corresponds to an even larger set of stochastic processes, which, loosely speaking, is a set of (not necessarily Markovian) stochastic processes whose (possibly time-and history-dependent) transition rate matrix is not exactly specified, but is only known to take values in $\rateset$; see Reference~\cite{Krak:2016} for more information.} Indeed, as recently shown in References~~\cite{Krak:2016,Skulj:2015cq}, for the largest such set of Markov chains, and under relatively mild conditions on $\rateset$,\footnote{It should have separately specified rows, which means that every row has a set of possible candidate rows, and that the set of rate matrices $\rateset$ is constructed by combining these candidate rows in all possible ways, by selecting one element from each candidate set.} the tightest possible lower bound on the conditional expectation $\mathbb{E}(f(X_t)\vert X_0=x)$---the conditional lower expectation $\underline{\mathbb{E}}(f(X_t)\vert X_0=x)$---is equal to the solution $\lt_tf$ of the differential Equation~\eqref{eq:differentialinintroduction}, with $\lrate$ the lower envelope of $\mathcal{Q}$.


Therefore, if $\lim_{t\to+\infty}\lt_tf$ exists and is constant---if $\lrate$ is ergodic---this can be interpreted to mean that the limit value of the conditional lower expectation $\underline{\mathbb{E}}(f(X_t)\vert X_0=x)$ does not depend on the initial state $x$, or equivalently, that the imprecise continuous-time Markov chain under study has a unique limiting lower expectation operator $\underline{\mathbb{E}}_{\infty}$, in the sense that
\begin{equation*}
\underline{\mathbb{E}}_{\infty}(f)=\lim_{t\to+\infty}\underline{\mathbb{E}}(f(X_t)\vert X_0=x)
\text{~~~for all $x\in\states$ and all real functions $f$ on $\states$.}\vspace{7pt}
\end{equation*}
This interpretation turns our main result---a necessary and sufficient condition for $\lrate$ to be ergodic---into a practical tool: it provides a simple criterion for checking whether or not a given imprecise continuous-time Markov chain has a unique limiting lower expectation operator $\underline{\mathbb{E}}_{\infty}$. In the special case where the lower transition rate operator $\lrate$ is actually a transition rate matrix $Q$, our notion of ergodicity coincides with the usual one and, in that case, our results can be used to check wether the continuous-time Markov chain that corresponds to $Q$ has a unique limiting distribution $P_\infty$, whose expectation operator $\mathbb{E}_{\infty}$ will then be equal to $\underline{\mathbb{E}}_{\infty}$.



That being said, this paper does not adopt any specific interpretation, but takes a purely mathematical point of view. Our object of study here is the solution $\lt_t$ of the differential Equation~\eqref{eq:differentialinintroduction}, and our main result is a necessary and sufficient condition for $\lrate$ to be ergodic, in the sense that $\lt_t$ converges to a limit operator that maps functions to constants. As explained above, this result is directly applicable to---and inspired by---the theory of imprecise continuous-time Markov chains; more information about this field of study can be found in References~\cite{Krak:2016,Skulj:2015cq,Troffaes:CTMCapp,2012arXiv1204.0467S}.
However, we think that our results should also be of interest to other fields whose aim it is to robustify the theory of continuous-time Markov chains, such as continuous-time Markov decision processes~\cite{Xianping:2009}, continuous-time controlled Markov chains~\cite{guo2003} and interval continuous-time Markov chains~\cite{Galdino:2013}. More generally, we believe that our ideas and results are relevant to any theory that studies---or requires---some kind of robust generalisation of the matrix exponential of an intensity matrix.

We end this introduction with a brief overview of the structure of this paper. After Section~\ref{sec:prelim}, in which we introduce some basic preliminary concepts, the rest of this paper is structured as follows.

We start in Section~\ref{sec:lowertrans} by introducing the concept of a lower transition operator $\lt$, which is a non-linear---superadditive---generalisation of a stochastic matrix; the operator $\lt_t$ that is studied in this paper is a special case. We provide a definition, explain the connection with coherent lower previsions~\cite{Troffaes:2014tl,Walley:1991vk}, and use this connection to establish a number of technical properties. 

Section~\ref{sec:lowertransergodic} then goes on to define ergodicity for lower transition operators, which is  a discrete-time version of the notion of ergodicity that we study in this paper, and recalls that a lower transition operator $\lt$ will exhibit this type of ergodicity if and only if it is regularly absorbing~\cite{Hermans:2012ie}.
We also introduce a new property, called being $1$-step absorbing, and show that it is a sufficient condition for $\lt$ to be ergodic.

Next, in Section~\ref{sec:lowertransrate}, we introduce the concept of a lower transition rate operator $\lrate$, which, as already mentioned before, is a non-linear---superadditive---generalisation of an intensity matrix. We provide a definition, prove a number of properties, and establish a connection with lower transition operators.

Having introduced all of these related concepts and their properties, the rest of this paper focusses on our main object of interest, which is the time-dependent lower transition operator $\lt_t$ that corresponds to a given lower transition rate operator $\lrate$. Section~\ref{sec:differential} defines this operator as the unique solution to Equation~\eqref{eq:differentialinintroduction}, shows that it is indeed a lower transition operator, and then proves that it also satisfies another---closely related---differential equation, which applies directly to $\lt_t$ rather than $\lt_tf$. We end this section by establishing a limit expression for $\lt_t$, which resembles---and generalises---the well-known limit expression of a matrix exponential.

With these characterisations of $\lt_t$ in hand, Section~\ref{sec:lowertransrateergodic} then moves on to study its limit behaviour, and in particular, its ergodicity. First of all, we show that $\lrate$ is ergodic---$\lim_{t\to+\infty}\lt_t f$ exists and is constant---if and only if, for any $t>0$, $\lt_t$ is ergodic in the discrete-time sense of Section~\ref{sec:lowertransergodic}. Secondly, for any $t>0$, we show that $\lt_t$ is regularly absorbing if and only if it is $1$-step absorbing. Thirdly, we establish a simple qualitative method for checking whether $\lt_t$ is $1$-step absorbing; this method does not depend on $t$, and is expressed directly in terms of the lower transition rate operator $\lrate$. Finally, we explain how these three results, when combined, lead to a simple necessary and sufficient condition for $\lrate$ to be ergodic. 
All that is needed in order to check this condition, is the sign of a limited number of evaluations of $\lrate$. 

Section~\ref{sec:conclusions} concludes this paper. It briefly discusses our main result and then goes on to suggest some ideas for future research, including a number of specific conjectures and open questions that we consider to be important. The proofs of all our results are gathered in Appendix~\ref{app:proofs}; they are organised per section and in order of appearence. The appendix also contains some additional technical lemmas.

\section{Preliminaries}\label{sec:prelim}

Consider some finite \emph{state space} $\states$. 
Let $\gamblesX$ be the set of all real-valued functions on $\states$. For any $S\in\states$, let $\ind{S}\in\gamblesX$ be the indicator of $S$, defined by $\ind{S}(x)\coloneqq1$ if $x\in S$ and $\ind{S}(x)\coloneqq0$ otherwise. If $S$ is a singleton $\{x\}$, we also write $\ind{x}$ instead of $\ind{\{x\}}$. We use $I$ to denote the indentity map that maps any $f\in\gamblesX$ to itself. $\nats$ is the set of natural numbers without zero and $\natswith\coloneqq\nats\cup\{0\}$.

For any $f\in\gamblesX$, we let $\norm{f}\coloneqq\norm{f}_{\infty}\coloneqq\max\{\abs {f(x)}\colon x\in\states\}$ be the maximum norm. For any operator $A$ from $\gamblesX$ to $\gamblesX$ that is non-negatively homogeneous, meaning that
\begin{equation*}
A(\lambda f)=\lambda A(f)\text{ for all $f\in\gamblesX$ and all $\lambda\geq0$,}
\end{equation*}
we consider the induced operator norm
\begin{equation}\label{eq:operatornorm}
\norm{A}\coloneqq\sup\{\norm{Af}\colon f\in\gamblesX,\norm{f}=1\}.
\end{equation}

\noindent
Not only do these norms satisfy the usual defining properties of a norm, they also satisfy the following additional properties; see Appendix~\ref{app:prelimproofs} for a proof. For all $f\in\gamblesX$ and all operators $A,B$ from $\gamblesX$ to $\gamblesX$ that are non-negatively homogeneous:
\vspace{2pt}

\begin{enumerate}[label=N\arabic*:,ref=N\arabic*]
\item
$\norm{Af}\leq\norm{A}\norm{f}$
\label{N:gambleproduct}
\item
$\norm{AB}\leq\norm{A}\norm{B}$\label{N:operatorproduct}
\end{enumerate}




\section{Lower transition operators}\label{sec:lowertrans}

The first type of non-negatively homogeneous operator that we will consider in this paper is a lower transition operator~$\lt$. As we will show in Section~\ref{sec:differential}, the solution $\lt_t$ of the differential equation that we study in this paper is of this type.

\begin{definition}[Lower transition operator] A lower transition operator $\lt$ is a map from $\gamblesX$ to $\gamblesX$ such that for all $f,g\in\gamblesX$ and $\lambda\geq0$:
\vspace{4pt}
\begin{enumerate}[label=\emph{L\arabic*:},ref=L\arabic*]
\item
$\lt f\geq\min f$;\label{L:bound}
\item
$\lt(f+g)\geq\lt(f)+\lt(g)$\label{L:subadditivity}; \hfill\emph{[superadditivity]}
\item
$\lt(\lambda f)=\lambda\lt(f)$\label{L:homo}. \hfill\emph{[non-negative homogeneity]}
\end{enumerate}
\vspace{1pt}
The corresponding upper transition operator $\ut$ is defined by
\begin{equation}
\ut f\coloneqq-\lt(-f)\text{~~for all $f\in\gamblesX$.}\label{eq:Tconjugacy}
\end{equation}
\end{definition}

Basically, a lower transition operator is just a superadditive generalisation of a stochastic matrix. If the superadditivity axiom is replaced by an additivity axiom, a lower transition operator will coincide with its upper transition operator, and can then be identified with a stochastic matrix $T$.


For every lower transition operator $\lt$ and any $x\in\states$, the operator $\lt(\cdot)(x)$ is a coherent lower prevision~\cite{Troffaes:2014tl,Walley:1991vk}: a superadditive, non-negatively homogeneous map from $\gamblesX$ to $\reals$ that dominates the $\min$-operator. Therefore, lower transition operators are basically just finite vectors of coherent lower previsions. As a direct consequence, the following properties are implied by the corresponding versions for coherent lower previsions; see Reference~\cite[2.6.1]{Walley:1991vk}. For any $f,g\in\gamblesX$ and $\mu\in\reals$ and all sequences $\{f_n\}_{n\in\nats}\subseteq\gamblesX$:
\vspace{1pt}
\begin{enumerate}[label=L\arabic*:,ref=L\arabic*]
\setcounter{enumi}{3}
\item
$\min f\leq\lt f\leq\ut f\leq\max f$;\label{L:bounds}
\item
$\lt(f+\mu)=\lt(f)+\mu$;\label{L:constantadditivity}
\item
$f\geq g~\then~\lt(f)\geq\lt(g)$ and $\ut(f)\geq\ut(g)$;\label{L:monotone}
\item
$\abs{\lt f-\lt g}\leq\ut(\abs{f-g})$;\label{L:absinequality}
\item
$f_n\to f~\then~\lt f_n\to \lt f$.
\label{L:continuous}
\end{enumerate}
\vspace{2pt}
As a rather straightforward consequence of \ref{L:bounds} and \ref{L:absinequality}, we also find that
\vspace{4pt}
\begin{enumerate}[label=L\arabic*:,ref=L\arabic*]
\setcounter{enumi}{8}
\item
$\norm{\lt}\leq1$;\label{L:lowerthanone}
\item
$\norm{\lt f-\lt g}\leq\norm{f-g}$;\label{L:gambleinequality}
\item
$\norm{\lt A-\lt B}\leq\norm{A-B}$,\label{L:operatorinequality}
\end{enumerate}
\vspace{2pt}
where $A$ and $B$ are non-negatively homogeneous operators from $\gamblesX$ to $\gamblesX$; see Appendix~\ref{app:lowertransproofs} for a proof. Finally, as this next result establishes, a sequence of lower transition operators convergences pointwise if and only if it converges with respect to the operator norm.

\begin{proposition}\label{prop:Luniform}
For any lower transition operator $\lt$ and any sequence $\{\lt_n\}_{n\in\nats}$ of lower transition operators:
\begin{equation*}
\lt_n\to\lt~\asa~\forall f\in\gamblesX\colon\lt_nf\to\lt f.
\end{equation*}
\end{proposition}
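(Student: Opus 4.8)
The plan is to treat the two implications separately, since the forward one is essentially immediate and all the genuine work lies in the converse.

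For the direction $\lt_n\to\lt\then\lt_nf\to\lt f$, I would first note that both $\lt_n$ and $\lt$ are non-negatively homogeneous by \ref{L:homo}, so their difference $\lt_n-\lt$ is non-negatively homogeneous as well and its operator norm \eqref{eq:operatornorm} is defined. Property \ref{N:gambleproduct} then applies to $A=\lt_n-\lt$ and gives $\norm{\lt_nf-\lt f}=\norm{(\lt_n-\lt)f}\leq\norm{\lt_n-\lt}\,\norm{f}$ for every $f\in\gamblesX$. Since $\norm{\lt_n-\lt}\to0$ by assumption, the right-hand side vanishes for each fixed $f$, whence $\lt_nf\to\lt f$. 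This settles the easy direction.

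For the converse I would exploit two features: the finiteness of $\states$ and the nonexpansiveness of lower transition operators. Because $\states$ is finite, $\gamblesX$ is finite-dimensional, so the unit sphere $K\coloneqq\{f\in\gamblesX\colon\norm{f}=1\}$ is closed and bounded, hence compact. Define $g_n\colon K\to\gamblesX$ by $g_n(f)\coloneqq\lt_nf-\lt f$, so that, directly from \eqref{eq:operatornorm}, $\norm{\lt_n-\lt}=\sup_{f\in K}\norm{g_n(f)}$. The pointwise hypothesis says precisely that $g_n(f)\to0$ for each $f\in K$, and the goal is to upgrade this to uniform convergence on $K$. The crucial ingredient is that each $\lt_n$ and $\lt$ is nonexpansive by \ref{L:gambleinequality}, which makes the family $\{g_n\}_{n\in\nats}$ uniformly equicontinuous: for all $f,f'\in K$ and all $n$, $\norm{g_n(f)-g_n(f')}\leq\norm{\lt_nf-\lt_nf'}+\norm{\lt f-\lt f'}\leq2\norm{f-f'}$.

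The main step is then the standard compactness argument that a uniformly equicontinuous sequence converging pointwise on a compact set converges uniformly. Fixing $\epsilon>0$, I would set $\delta\coloneqq\epsilon/4$, so the equicontinuity bound yields $\norm{g_n(f)-g_n(f')}<\epsilon/2$ whenever $\norm{f-f'}<\delta$, uniformly in $n$; cover $K$ by finitely many $\delta$-balls centred at $f_1,\dots,f_m\in K$, and use pointwise convergence to pick $N$ with $\norm{g_n(f_i)}<\epsilon/2$ for all $i$ and all $n\geq N$. For arbitrary $f\in K$, choosing $f_i$ with $\norm{f-f_i}<\delta$ gives $\norm{g_n(f)}\leq\norm{g_n(f)-g_n(f_i)}+\norm{g_n(f_i)}<\epsilon$ for $n\geq N$, hence $\norm{\lt_n-\lt}=\sup_{f\in K}\norm{g_n(f)}\leq\epsilon$ and so $\lt_n\to\lt$. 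The only real obstacle is recognising that equicontinuity (from \ref{L:gambleinequality}) together with compactness of $K$ (from the finiteness of $\states$) is exactly what bridges pointwise and norm convergence; once these are in place the estimate is routine.
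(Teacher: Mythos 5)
Your proof is correct, and it reaches the conclusion by a route that differs in mechanics, though not in its essential ingredients, from the paper's. Both arguments hinge on exactly two facts: the unit sphere $K=\{f\in\gamblesX\colon\norm{f}=1\}$ is compact because $\states$ is finite, and the operators are uniformly nonexpansive by \ref{L:gambleinequality}, which is what lets pointwise convergence be upgraded to convergence in operator norm. The difference is in how compactness is deployed. The paper argues by contradiction: assuming $\lt_n\not\to\lt$, it extracts witnesses $f_k\in K$ with $\norm{\lt_{n_k}f_k-\lt f_k}>\epsilon$, applies Bolzano--Weierstrass to get a convergent subsequence $f_{k_i}\to f$, and then uses \ref{L:gambleinequality} in a three-term triangle-inequality estimate to force $\norm{\lt_{n_{k_i}}f_{k_i}-\lt f_{k_i}}\to0$, a contradiction. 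You instead give a direct proof: the uniform $2$-Lipschitz bound makes $\{g_n\}$ equicontinuous, a finite $\delta$-net of $K$ reduces the supremum over $K$ to finitely many pointwise limits, and the standard Arzel\`a--Ascoli-style estimate closes the argument. These are the two classical proofs of the same underlying lemma (pointwise convergence plus equicontinuity on a compact set implies uniform convergence), one via sequential compactness and one via total boundedness. Your version has the advantage of being constructive in the sense that it produces an explicit $N$ from the finitely many pointwise rates and makes the role of equicontinuity fully explicit; the paper's version is slightly shorter because the contradiction lets it avoid setting up the covering. Both are complete and correct.
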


\section{Ergodicity for lower transition operators}\label{sec:lowertransergodic}

In the linear case, that is, if the lower transition operator $\lt$ is actually a stochastic matrix $T$, then under rather weak assumptions, $T^n$ converges to a limit matrix that has identical rows, or equivalently, for all $f\in\gamblesX$, $\lim_{n\to+\infty}T^nf$ exists and is a constant function. This property of $T$ is called ergodicity,\footnote{This terminology is not universally adopted; we follow Senata~\cite[p.128]{seneta06}. Some authors use ergodicity to refer to a stronger property, which additionally requires that the identical rows of $\lim_{n\to+\infty}T^n$ consist of strictly positive elements, and which can be shown to be equivalent to the existence of some $n\in\nats$ such that $T^n$ consists of strictly positive elements only.} and the conditions under which it happens are well-studied; see for example Reference~\cite[Section 4.2]{seneta06}.

For our present purposes, we are interested in a generalised version of this concept of ergodicity, which applies to lower transition operators.

\begin{definition}[Ergodic lower transition operator]\label{def:lowertransergodic}
A lower transition operator $\lt$ is ergodic if, for all $f\in\gamblesX$, $\lim_{n\to\infty}\lt^n f$ exists and is a constant function.
\end{definition}

\noindent
Similarly, the corresponding upper transition operator $\ut$ is said to be ergodic if, for all $f\in\gamblesX$, $\lim_{n\to\infty}\ut^n f$ exists and is a constant function. It follows from Equation~\eqref{eq:Tconjugacy} that both notions are equivalent: $\lt$ is ergodic if and only if $\ut$ is. 

Hermans and De Cooman characterised this notion of ergodicity in Reference~\cite{Hermans:2012ie}, showing that a lower transition operator is ergodic if and only if it is regularly absorbing; see Proposition~\ref{prop:filip} further on. The following definition of a regularly absorbing lower transition operator is an equivalent but slightly simplified version of theirs; Lemma~\ref{lemma:simplifyfilip} in Appendix~\ref{app:lowertransergodicproofs} establishes the equivalence.

\begin{definition}[Regularly absorbing lower transition operator]\label{def:regularlyabsorbing}
A lower transition operator $\lt$ is regularly absorbing if it satisfies the following two conditions:
\begin{equation*}
\mathcal{X}_{\mathrm{RA}}\coloneqq
\{
x\in\states\colon
(\exists n\in\nats)~
\min\ut^n\ind{x}>0
\}\neq\emptyset\vspace{-3pt}
\end{equation*}
and
\begin{equation*}
(\forall x\in\states\setminus\mathcal{X}_{\mathrm{RA}})
(\exists n\in\nats)~
\lt^n\ind{\mathcal{X}_{\mathrm{RA}}}(x)>0.\vspace{5pt}
\end{equation*}
The first condition is called top class regularity and the second condition is called top class absorption.
\end{definition}

\begin{proposition}\label{prop:filip}
A lower transition operator $\lt$ is ergodic if and only if it is regularly absorbing.
\end{proposition}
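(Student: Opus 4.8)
The plan is to obtain Proposition~\ref{prop:filip} as a direct consequence of the ergodicity characterisation of Hermans and De Cooman~\cite{Hermans:2012ie}, rather than to reprove the convergence from scratch. Their result already asserts that a lower transition operator is ergodic if and only if it is regularly absorbing, but with respect to their original formulation of top class regularity and top class absorption. The entire content of the present statement therefore reduces to reconciling Definition~\ref{def:regularlyabsorbing} with theirs.

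First I would recall the Hermans--De Cooman characterisation, so that ``regularly absorbing'' in their sense is available as a known equivalent of ergodicity. Next I would invoke Lemma~\ref{lemma:simplifyfilip}, which asserts that a lower transition operator satisfies top class regularity and top class absorption in the sense of Definition~\ref{def:regularlyabsorbing} if and only if it satisfies the corresponding conditions of Hermans and De Cooman. Chaining the two equivalences---ergodic $\asa$ regularly absorbing (their sense) $\asa$ regularly absorbing (our sense)---then yields the claim immediately.

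Consequently, the real work is concentrated entirely in Lemma~\ref{lemma:simplifyfilip}, and this is where I expect the main obstacle to lie. Establishing the definitional equivalence requires manipulating the iterates $\lt^n$ and $\ut^n$ directly: I would use monotonicity~\ref{L:monotone}, constant additivity~\ref{L:constantadditivity} and the conjugacy~\eqref{eq:Tconjugacy} between $\lt$ and $\ut$ to show that the set $\mathcal{X}_{\mathrm{RA}}$ defined through the condition $\min\ut^n\ind{x}>0$ coincides with the top class of Hermans and De Cooman, and that the single reachability requirement $\lt^n\ind{\mathcal{X}_{\mathrm{RA}}}(x)>0$ captures their absorption condition. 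The delicate point is that both conditions quantify existentially over $n$, so care is needed to ensure that accessibility established at different time horizons can be combined consistently; here superadditivity~\ref{L:subadditivity} together with monotonicity are the tools that let reachability be propagated across iterates.
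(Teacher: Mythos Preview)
Your proposal is correct and matches the paper's proof exactly: the paper likewise reduces Proposition~\ref{prop:filip} to \cite[Proposition~3]{Hermans:2012ie} via Lemma~\ref{lemma:simplifyfilip}, which bridges Definition~\ref{def:regularlyabsorbing} and the original Hermans--De Cooman formulation. Your closing remarks about the ingredients needed for Lemma~\ref{lemma:simplifyfilip} are also broadly on target, though in the paper the equality $\mathcal{X}_{\mathrm{RA}}=\mathcal{X}'_{\mathrm{RA}}$ follows simply from~\ref{L:bounds} (once $\min\ut^n\ind{x}>0$, the same bound propagates to all $k\geq n$), and the absorption conditions are matched via~\ref{L:constantadditivity} and~\eqref{eq:Tconjugacy} without any appeal to superadditivity.
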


If a lower transition operator satisfies Definition~\ref{def:regularlyabsorbing} with $n\coloneqq1$, we call this lower transition operator 1-step absorbing.

\begin{definition}[1-step absorbing lower transition operator]\label{def:onestepabsorbing}
A lower transition operator $\lt$ is 1-step absorbing if it satisfies the following two conditions:
\begin{equation*}
\mathcal{X}_{\mathrm{1A}}\coloneqq
\{
x\in\states\colon
\min\ut\ind{x}>0
\}\neq\emptyset\vspace{-3pt}
\end{equation*}
and
\begin{equation*}
(\forall x\in\states\setminus\mathcal{X}_{\mathrm{1A}})~
\lt\ind{\mathcal{X}_{\mathrm{1A}}}(x)>0.\vspace{5pt}
\end{equation*}
\end{definition}

Since $\mathcal{X}_{\mathrm{1A}}$ is clearly subset of $\mathcal{X}_{\mathrm{RA}}$, it follows from~\ref{L:monotone} that $\lt\ind{\mathcal{X}_{\mathrm{RA}}}\geq\lt\ind{\mathcal{X}_{\mathrm{1A}}}$, and therefore, every 1-step absorbing lower transition operator is guaranteed to be regularly absorbing as well. By combining this observation with Proposition~\ref{prop:filip}, it follows that being $1$-step absorbing is a sufficient condition for ergodicity.
However, in general, this stronger condition of being $1$-step absorbing is not necessary for ergodicity. The reason why we are nevertheless interested in this stronger property is because, as we will show further on in Section~\ref{sec:lowertransrateergodic}, for the particular lower transition operators $\lt_t$ that are the focus of this paper, both of these properties---Definitions~\ref{def:regularlyabsorbing} and~\ref{def:onestepabsorbing}---are equivalent; see Proposition~\ref{prop:regulariffonestep}.

\section{Lower transition rate operators}\label{sec:lowertransrate}

Having introduced a non-linear generalisation of a stochastic matrix, we now move on to introduce a similar generalisation of an intensity matrix---a matrix that has non-negative off-diagonal elements and rows that sum to zero. Again, the only difference is the additivity axiom, which we relax by replacing it with a superadditivity axiom. 

\begin{definition}[Lower transition rate operator] A lower transition rate operator $\lrate$ is a map from $\gamblesX$ to $\gamblesX$ such that for all $f,g\in\gamblesX$, $\lambda\geq0$, $\mu\in\reals$ and $x,y\in\states$:
\vspace{5pt}
\begin{enumerate}[label=\emph{R\arabic*:},ref=R\arabic*]
\item
$\lrate(\mu)=0$;\label{LR:constant}
\item
$\lrate(f+g)\geq\lrate(f)+\lrate(g)$;\label{LR:subadditive}\hfill\emph{[superadditivity]}
\item
$\lrate(\lambda f)=\lambda\lrate(f)$;\label{LR:homo}\hfill\emph{[non-negative homogeneity]}
\item
$x\neq y~\then~\lrate(\ind{y})(x)\geq0$.\label{LR:nondiagpositive}
\end{enumerate}
\vspace{1pt}
The corresponding upper transition operator $\urate$ is defined by
\begin{equation}
\urate f\coloneqq-\lrate(-f)\text{~for all $f\in\gamblesX$.}\label{eq:Qconjugacy}
\end{equation}
\end{definition}

As a rather straightforward consequence of this definition, a lower transition rate operator also satisfies the following properties; see Appendix~\ref{app:lowertransrateproofs} for a proof. For all $f\in\gamblesX$, $\mu\in\reals$ and $x\in\states$:
\vspace{4pt}
\begin{enumerate}[label=R\arabic*:,ref=R\arabic*]
\setcounter{enumi}{4}
\item $\lrate(f)\leq\urate(f)$;\label{LR:lowerbelowupper}
\item $\lrate(f+\mu)=\lrate(f)$;\label{LR:removeconstant}
\item $\urate(\ind{x})(x)\leq0$;\label{LR:diagonalnegative}
\item $2\norm{f}\lrate(\ind{x})(x)
\leq(f(x)-\min f)\lrate(\ind{x})(x)\leq\lrate(f)(x)$;\label{LR:gamblebound}
\item $\norm{\lrate}\leq2\max_{x\in\states}\abs{\lrate(\ind{x})(x)}$.\label{LR:operatorbound}
\end{enumerate}
\vspace{2pt}

Lower transition rate operators are very closely related to lower transition operators: they can be derived from each other. The following two results make this explicit.

\begin{proposition}\label{prop:fromQtoL}
Let $\lrate$ be a lower transition rate operator. Then for all $\Delta\geq0$ such that $\Delta\norm{\lrate}\leq1$, $I+\Delta\lrate$ is a lower transition operator.
\end{proposition}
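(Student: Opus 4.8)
The plan is to set $\lt \coloneqq I + \Delta\lrate$, so that $\lt f = f + \Delta\lrate f$ for every $f\in\gamblesX$, and to verify the three defining axioms \ref{L:bound}--\ref{L:homo} of a lower transition operator. Two of these will be immediate. Non-negative homogeneity \ref{L:homo} follows directly from \ref{LR:homo}: for $\lambda\geq0$, $\lt(\lambda f)=\lambda f+\Delta\lrate(\lambda f)=\lambda f+\lambda\Delta\lrate(f)=\lambda\lt(f)$. Superadditivity \ref{L:subadditivity} follows from \ref{LR:subadditive} together with $\Delta\geq0$: $\lt(f+g)=(f+g)+\Delta\lrate(f+g)\geq f+\Delta\lrate(f)+g+\Delta\lrate(g)=\lt f+\lt g$. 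Neither of these two steps needs the hypothesis $\Delta\norm{\lrate}\leq1$.

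The real content lies in the lower bound \ref{L:bound}, i.e. $\lt f\geq\min f$, and this is where the bound on $\Delta$ enters. Fixing an arbitrary $x\in\states$, I would show $\lt f(x)=f(x)+\Delta\lrate(f)(x)\geq\min f$. The key input is the sharper middle inequality of \ref{LR:gamblebound}, namely $\lrate(f)(x)\geq(f(x)-\min f)\lrate(\ind{x})(x)$; multiplying it by $\Delta\geq0$ and adding $f(x)$ (and writing $f(x)=\min f+(f(x)-\min f)$) gives
\begin{equation*}
\lt f(x)\geq\min f+(f(x)-\min f)\bigl(1+\Delta\lrate(\ind{x})(x)\bigr).
\end{equation*}
It therefore suffices to argue that the product on the right is non-negative. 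Its first factor $f(x)-\min f$ is clearly non-negative, so the whole argument reduces to controlling the second factor.

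For the second factor I would first record that $\lrate(\ind{x})(x)\leq0$, which follows by chaining \ref{LR:lowerbelowupper} and \ref{LR:diagonalnegative} into $\lrate(\ind{x})(x)\leq\urate(\ind{x})(x)\leq0$. Hence $1+\Delta\lrate(\ind{x})(x)=1-\Delta\abs{\lrate(\ind{x})(x)}$. Finally, \ref{N:gambleproduct} with $\norm{\ind{x}}=1$ yields $\abs{\lrate(\ind{x})(x)}\leq\norm{\lrate(\ind{x})}\leq\norm{\lrate}$, so that $\Delta\abs{\lrate(\ind{x})(x)}\leq\Delta\norm{\lrate}\leq1$ by hypothesis, making the second factor non-negative as well. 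This establishes $\lt f(x)\geq\min f$, and since $x$ was arbitrary, \ref{L:bound} holds.

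The main obstacle is precisely the verification of \ref{L:bound}; the one subtlety there is to select the right inequality from \ref{LR:gamblebound}—the sharper middle one in terms of $(f(x)-\min f)$ rather than the coarser outer one involving $2\norm{f}$—so that the leftover factor is exactly the quantity $1-\Delta\abs{\lrate(\ind{x})(x)}$ that the condition $\Delta\norm{\lrate}\leq1$ forces to be non-negative. Everything else is routine.
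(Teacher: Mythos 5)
Your proof is correct and follows essentially the same route as the paper's: \ref{L:subadditivity} and \ref{L:homo} are dispatched trivially from \ref{LR:subadditive} and \ref{LR:homo}, and \ref{L:bound} is obtained from the middle inequality of \ref{LR:gamblebound} together with the bound $\abs{\lrate(\ind{x})(x)}\leq\norm{\lrate(\ind{x})}\leq\norm{\lrate}$, leaving the non-negative factor $1-\Delta\norm{\lrate}$ (the paper writes this as a single chain of function inequalities rather than pointwise, and does not bother to record the sign of $\lrate(\ind{x})(x)$, but the argument is identical in substance).
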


\begin{proposition}\label{prop:fromLtoQ}
Let $\lt$ be a lower transition operator. Then for all $\Delta>0$, $\lrate\coloneqq\nicefrac{1}{\Delta}(\lt-I)$ is a lower transition rate operator.
\end{proposition}

Because of this connection, we can use results for lower transition operators to obtain similar results for lower transition rate operators. The following properties can for example be derived from~\ref{L:continuous}, \ref{L:gambleinequality} and~\ref{L:operatorinequality} respectively; see Appendix~\ref{app:lowertransrateproofs} for a proof. For any sequence $\{f_n\}_{n\in\nats}\subseteq\gamblesX$ and all $f,g\in\gamblesX$:
\vspace{4pt}
\begin{enumerate}[label=R\arabic*:,ref=R\arabic*]
\setcounter{enumi}{9}
\item
$f_n\to f~\then~\lrate f_n\to \lrate f$;\label{LR:continuous}
\item
$\norm{\lrate f-\lrate g}\leq 2\norm{\lrate}\norm{f-g}$;\label{LR:gambleinequality}
\item
$\norm{\lrate A-\lrate B}\leq 2\norm{\lrate}\norm{A-B}$,\label{LR:operatorinequality}
\end{enumerate}
\vspace{4pt}
where $A$ and $B$ are non-negatively homogeneous operators from $\gamblesX$ to $\gamblesX$. Similarly, the following result can be derived from Proposition~\ref{prop:Luniform}.

\begin{proposition}\label{prop:Quniform}
For any lower transition rate operator $\lrate$ and any sequence $\{\lrate_n\}_{n\in\nats}$ of lower transition rate operators:
\begin{equation*}
\lrate_n\to\lrate~\asa~\forall f\in\gamblesX\colon\lrate_nf\to\lrate f.
\end{equation*}
\end{proposition}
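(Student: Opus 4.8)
The plan is to reduce the statement to Proposition~\ref{prop:Luniform} by means of the affine correspondence $\lrate\mapsto I+\Delta\lrate$ supplied by Proposition~\ref{prop:fromQtoL}. The forward implication is the easy one and does not really need Proposition~\ref{prop:Luniform} at all: since $\lrate_n-\lrate$ is non-negatively homogeneous (being a difference of two non-negatively homogeneous maps, by~\ref{LR:homo}), property~\ref{N:gambleproduct} gives $\norm{\lrate_n f-\lrate f}=\norm{(\lrate_n-\lrate)f}\leq\norm{\lrate_n-\lrate}\,\norm{f}$ for every $f\in\gamblesX$. Hence $\lrate_n\to\lrate$ (convergence in operator norm) immediately forces $\lrate_n f\to\lrate f$ for all $f$.

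For the converse, I would first secure a uniform bound on the norms $\norm{\lrate_n}$, which is the crux of the argument. Assuming $\lrate_n f\to\lrate f$ for all $f$, evaluating $\lrate_n\ind{x}\to\lrate\ind{x}$ at the point $x$ shows that, for each of the finitely many $x\in\states$, the real sequence $\{\lrate_n(\ind{x})(x)\}_{n\in\nats}$ converges and is therefore bounded. Since $\states$ is finite, $\max_{x\in\states}\abs{\lrate_n(\ind{x})(x)}$ is bounded uniformly in $n$, so property~\ref{LR:operatorbound} yields a single constant $M$ with $\norm{\lrate_n}\leq M$ for all $n$ (and $\norm{\lrate}\leq M$). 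This uniform bound is exactly what allows a single step size to serve the whole sequence, and it is the one place where a little care is required; everything else is a routine transfer along the correspondence.

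Next I would fix some $\Delta>0$ with $\Delta M\leq 1$ and set $\lt_n\coloneqq I+\Delta\lrate_n$ and $\lt\coloneqq I+\Delta\lrate$; by the uniform bound and Proposition~\ref{prop:fromQtoL}, these are all lower transition operators. The pointwise hypothesis transfers directly, since $\lt_n f=f+\Delta\,\lrate_n f\to f+\Delta\,\lrate f=\lt f$ for every $f$, so Proposition~\ref{prop:Luniform} applies and gives $\lt_n\to\lt$ in operator norm. Finally I would unwind the transformation: because $\lt_n-\lt=\Delta(\lrate_n-\lrate)$ and the operator norm is homogeneous, $\norm{\lrate_n-\lrate}=\Delta^{-1}\norm{\lt_n-\lt}\to 0$, that is $\lrate_n\to\lrate$, completing the proof. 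As noted, the only genuine obstacle is the uniform norm bound in the second step.
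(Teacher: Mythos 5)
Your proposal is correct and follows essentially the same route as the paper: the forward direction via \ref{N:gambleproduct}, and for the converse a uniform bound on $\norm{\lrate_n}$ obtained from the convergence of $\lrate_n(\ind{x})(x)$ together with \ref{LR:operatorbound}, followed by the transfer to lower transition operators $I+\Delta\lrate_n$ with a single step size $\Delta$, an appeal to Proposition~\ref{prop:Luniform}, and unwinding by homogeneity of the operator norm. You have also correctly identified the uniform norm bound as the one non-routine step.
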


\section{The differential equation of interest}\label{sec:differential}

With all of the above material in place, we are now ready to introduce our main object of study: the time-dependent operator $\lt_t$ that corresponds to a given lower transition rate operator.

Let $\lrate$ be an arbitrary lower transition rate operator. Then for any $t\geq0$, we let $\lt_t$ be a map from $\gamblesX$ to $\gamblesX$, defined for all $f\in\gamblesX$ by the differential equation
\begin{equation}\label{eq:differential}
\frac{d}{dt}\lt_t f=\lrate \lt_t f\text{~~for all $t\geq0$}
\end{equation}
and the boundary condition $\lt_0 f\coloneqq f$. This definition is justified by a recent result of {\v{S}}kulj~\cite{Skulj:2015cq}, who showed that the above differential equation has a unique solution for all $t\geq0$. 

If $\lrate$ is additive, or equivalently, if $\lrate$ can be identified with an intensity matrix $Q$, then $\lt_t$ is equal to its matrix exponential $\smash{e^{Qt}}$. In the general case, the operator $\lt_t$ can be regarded as a superadditive generalisation of the matrix exponential. The rest of this section presents a number of basic properties of this operator and establishes some alternative characterisations for it.

First of all, as a direct consequence of its definition, we find that $\lt_t$ satisfies the following semigroup property:
\begin{equation}\label{eq:decomposition}
\lt_{t_1+t_2}=\lt_{t_1}\lt_{t_2}\text{ for all $t_1,t_2\geq0$}.
\end{equation}
Secondly, as already suggested by our notation, $\lt_t$ is a lower transition operator.

\begin{proposition}\label{prop:islowertransoperator}
Let $\lrate$ be a lower transition rate operator. Then for all $t\geq0$, $\lt_t$ is a lower transition operator.
\end{proposition}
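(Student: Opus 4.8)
The plan is to realise $\lt_t$ as a limit of Euler products built from lower transition operators, and then to argue that the three defining properties \ref{L:bound}--\ref{L:homo} survive both composition and pointwise limits. Fix $t\geq0$; the case $t=0$ is immediate, since $\lt_0=I$ is trivially a lower transition operator. For $n\in\nats$ large enough that $\nicefrac{t}{n}\,\norm{\lrate}\leq1$, Proposition~\ref{prop:fromQtoL} guarantees that $S_n\coloneqq I+\nicefrac{t}{n}\lrate$ is a lower transition operator, and hence so is its $n$-fold composition $S_n^{n}$. The whole argument then reduces to three claims: (i) a composition of lower transition operators is again one; (ii) a pointwise limit of lower transition operators is again one; and (iii) $S_n^{n}f\to\lt_tf$ for every $f\in\gamblesX$.

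Claims (i) and (ii) are routine. For (i), if $S,T$ are lower transition operators then $ST$ inherits \ref{L:homo} directly, inherits \ref{L:subadditivity} by combining monotonicity~\ref{L:monotone} of $S$ with the superadditivity of $T$ and then the superadditivity of $S$, and inherits \ref{L:bound} because $STf\geq\min(Tf)\geq\min f$ by two applications of \ref{L:bound}. For (ii), each of \ref{L:bound}, \ref{L:subadditivity} and \ref{L:homo} is an (in)equality between finitely many pointwise values and is therefore preserved under pointwise limits. Neither claim requires anything beyond the already-established properties of lower transition operators.

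The substance lies in claim (iii), the convergence of the Euler scheme to the solution of Equation~\eqref{eq:differential}; this is where I expect the real work to be. Writing $g_k\coloneqq\lt_{kt/n}f$, so that $g_0=f$, $g_n=\lt_tf$ and $g_{k+1}=\lt_{t/n}g_k$ by the semigroup property~\eqref{eq:decomposition}, I would telescope
\begin{equation*}
S_n^{n}f-\lt_tf=\sum_{k=0}^{n-1}\big(S_n^{\,n-k-1}(S_ng_k)-S_n^{\,n-k-1}(\lt_{t/n}g_k)\big)
\end{equation*}
and bound each summand in norm. Since every power $S_n^{\,n-k-1}$ is a lower transition operator, it is non-expansive by~\ref{L:gambleinequality}, so each summand is at most $\norm{S_ng_k-\lt_{t/n}g_k}$. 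Rewriting $\lt_{t/n}g_k$ through the integral form $\lt_{t/n}g_k=g_k+\int_0^{t/n}\lrate\lt_sg_k\,ds$ of the differential equation gives
\begin{equation*}
S_ng_k-\lt_{t/n}g_k=\int_0^{t/n}\big(\lrate g_k-\lrate\lt_sg_k\big)\,ds,
\end{equation*}
whose norm is at most $\int_0^{t/n}2\norm{\lrate}\,\norm{g_k-\lt_sg_k}\,ds$ by the Lipschitz property~\ref{LR:gambleinequality}. A further use of the integral form, together with an a priori Gr\"onwall bound $\norm{\lt_sg_k}\leq e^{2\norm{\lrate}s}\norm{g_k}$ read off directly from the differential equation, shows $\norm{g_k-\lt_sg_k}=O(s)$ uniformly in $k$, so that each summand is $O\big((\nicefrac{t}{n})^2\big)$. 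Summing the $n$ terms yields $\norm{S_n^{n}f-\lt_tf}=O(\nicefrac{1}{n})\to0$, which establishes (iii).

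Combining the three claims exhibits $\lt_t=\lim_{n\to\infty}S_n^{n}$ as a pointwise limit of lower transition operators, whence $\lt_t$ is itself a lower transition operator. The only delicate point is the quadratic-in-$\nicefrac{t}{n}$ error estimate in claim (iii); everything else follows mechanically from the non-expansiveness~\ref{L:gambleinequality}, the Lipschitz property~\ref{LR:gambleinequality}, and the semigroup identity~\eqref{eq:decomposition}.
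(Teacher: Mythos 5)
Your proof is correct, but it takes a genuinely different route from the paper's. The paper verifies \ref{L:bound}--\ref{L:homo} directly from the differential equation: it proves a minimum principle (Lemma~\ref{lemma:minimumincreases}) stating that any continuously differentiable $\Gamma$ with $\frac{d}{ds}\Gamma(s)\geq\lrate\Gamma(s)$ has non-decreasing minimum, applies it to $\Gamma(s)=\lt_sf$ for \ref{L:bound} and to $\Gamma(s)=\lt_s(f+g)-\lt_sf-\lt_sg$ for \ref{L:subadditivity}, and gets \ref{L:homo} from uniqueness of solutions. You instead realise $\lt_t$ as the pointwise limit of the Euler products $(I+\nicefrac{t}{n}\lrate)^n$ and transport the axioms through composition and limits. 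The one place where you had to be careful, you were: you cannot simply invoke Proposition~\ref{prop:approximation}, because the paper proves that result \emph{using} Proposition~\ref{prop:islowertransoperator} (its proof pushes errors through powers of $\lt_\Delta$ via \ref{L:operatorinequality}), whereas your telescoping pushes the errors through powers of $S_n=I+\nicefrac{t}{n}\lrate$, whose lower-transition-operator status comes from Proposition~\ref{prop:fromQtoL} alone, so there is no circularity. The trade-off is clear: your route costs a quantitative Gr\"onwall-type error analysis (the $O((\nicefrac{t}{n})^2)$ local estimate, which does hold uniformly in $k$ since $\norm{g_k}\leq e^{\norm{\lrate}t}\norm{f}$), but it delivers Proposition~\ref{prop:approximation} with an explicit $O(\nicefrac{1}{n})$ rate as a by-product; the paper's route is shorter and purely qualitative, but then has to prove the limit expression separately afterwards. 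A minor gloss: writing the integral form of Equation~\eqref{eq:differential} requires continuity of $s\mapsto\lrate\lt_sg_k$, which is exactly the content of Lemma~\ref{lemma:continouslydifferentiable} and follows from \ref{LR:continuous}; you should cite this rather than treat the integral form as automatic.
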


Thirdly, as our next result establishes, we do not need to consider the above differential equation for every $f\in\gamblesX$ separately. Instead, we can apply a similar differential equation to the operator $\lt_t$ itself.

\begin{proposition}\label{prop:operatordifferential}
Let $\lrate$ be a lower transition rate operator. 
Then $\lt_0=I$ 
 and
\begin{equation}\label{eq:operatordifferential}
\frac{d}{dt}\lt_t =\lrate \lt_t\text{~~for all $t\geq0$},\vspace{3pt} 
\end{equation}
where the derivative is taken with respect to the operator norm.
\end{proposition}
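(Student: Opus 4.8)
The boundary condition $\lt_0=I$ is immediate: by definition $\lt_0 f=f$ for every $f\in\gamblesX$, which is exactly the statement that $\lt_0$ is the identity operator. The substance of the proposition is the operator-norm differential equation~\eqref{eq:operatordifferential}, and the plan is to upgrade the pointwise differential Equation~\eqref{eq:differential}---which holds for each fixed $f$---to a statement about the operator $\lt_t$ in the induced operator norm. The key point to verify is that
\begin{equation*}
\lim_{h\to0}\norm{\frac{\lt_{t+h}-\lt_t}{h}-\lrate\lt_t}=0,
\end{equation*}
where the difference quotient is understood as a non-negatively homogeneous operator on $\gamblesX$ and the limit is taken in the operator norm~\eqref{eq:operatornorm}. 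The first thing I would do is recast the difference quotient using the semigroup property~\eqref{eq:decomposition}: for $h>0$ we have $\lt_{t+h}=\lt_h\lt_t$, so $\lt_{t+h}-\lt_t=(\lt_h-I)\lt_t$, and the problem reduces to controlling $(\lt_h-I)/h$ acting after $\lt_t$.

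Next I would establish the statement at $t=0$ first, i.e.\ that $\tfrac{1}{h}(\lt_h-I)\to\lrate$ in operator norm as $h\downarrow0$. By definition of the operator norm this means showing $\sup_{\norm{f}=1}\norm{\tfrac{1}{h}(\lt_hf-f)-\lrate f}\to0$, and the danger is that the pointwise convergence from~\eqref{eq:differential} need not be uniform over the unit sphere of $\gamblesX$ a priori. To get uniformity I would exploit the finite-dimensional structure and the Lipschitz-type bounds available: on the finite state space $\states$, $\gamblesX\cong\reals^{\abs{\states}}$, and the relevant operators are Lipschitz with constants controlled by $\norm{\lrate}$ via~\ref{LR:gambleinequality} and by~\ref{L:gambleinequality} for the $\lt_s$. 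A clean route is to integrate~\eqref{eq:differential}: for each $f$, $\lt_hf-f=\int_0^h \lrate\lt_sf\,ds$, so
\begin{equation*}
\norm{\frac{1}{h}(\lt_hf-f)-\lrate f}=\norm{\frac{1}{h}\int_0^h\bigl(\lrate\lt_sf-\lrate f\bigr)\,ds}\leq\frac{1}{h}\int_0^h\norm{\lrate\lt_sf-\lrate f}\,ds,
\end{equation*}
and then bound the integrand uniformly in $f$ with $\norm{f}=1$ using~\ref{LR:gambleinequality} followed by~\ref{L:gambleinequality}, giving $\norm{\lrate\lt_sf-\lrate f}\leq 2\norm{\lrate}\norm{\lt_sf-f}\leq 2\norm{\lrate}\norm{\lt_sf-\lt_0f}$. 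It then remains to show $\norm{\lt_sf-f}\to0$ uniformly in $f$ on the unit sphere as $s\downarrow0$; this continuity of $s\mapsto\lt_s$ at $0$ in operator norm should follow from the same integral representation together with the boundedness $\norm{\lt_s}\leq1$ from~\ref{L:lowerthanone}, yielding an $O(s)$ estimate independent of $f$.

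Once the $t=0$ case is in hand, I would deduce the general $t$ case by composing with $\lt_t$: writing $\tfrac{1}{h}(\lt_{t+h}-\lt_t)-\lrate\lt_t=\bigl(\tfrac{1}{h}(\lt_h-I)-\lrate\bigr)\lt_t$ and applying~\ref{N:operatorproduct} together with $\norm{\lt_t}\leq1$ gives the bound $\norm{(\tfrac{1}{h}(\lt_h-I)-\lrate)\lt_t}\leq\norm{\tfrac{1}{h}(\lt_h-I)-\lrate}$, which tends to $0$ by the previous step; here the left derivative for $t>0$ is handled analogously using $\lt_t=\lt_{t-h}\lt_h$ for small $h$ together with joint continuity. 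I expect the main obstacle to be precisely the passage from pointwise to operator-norm (uniform) convergence, since the operator $\lrate\lt_s$ is only Lipschitz rather than linear; the integral-representation argument above, combined with the norm inequalities~\ref{LR:gambleinequality}, \ref{L:gambleinequality}, \ref{L:lowerthanone} and~\ref{N:operatorproduct}, is designed specifically to make this uniformity fall out with explicit constants depending only on $\norm{\lrate}$.
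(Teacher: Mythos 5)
Your proposal is correct, and it reaches \eqref{eq:operatordifferential} by a genuinely different route in the key step. The paper also reduces everything to the behaviour of $\nicefrac{1}{\Delta}(\lt_\Delta-I)$ near $\Delta=0$ and then composes with $\lt_u$ using \ref{N:operatorproduct} and $\norm{\lt_u}\leq1$, but it obtains the operator-norm limit $\nicefrac{1}{\Delta}(\lt_\Delta-I)\to\lrate$ (Lemma~\ref{lemma:boundaryderivative}) indirectly: pointwise convergence from Equation~\eqref{eq:differential} is upgraded to norm convergence by invoking Proposition~\ref{prop:Quniform}, whose proof rests on the Bolzano--Weierstrass compactness argument of Proposition~\ref{prop:Luniform}. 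You instead propose a direct quantitative estimate via the integral representation $\lt_hf-f=\int_0^h\lrate\lt_sf\,ds$ (legitimate by Lemma~\ref{lemma:continouslydifferentiable} and the fundamental theorem of calculus applied componentwise), which combined with \ref{LR:gambleinequality}, \ref{N:gambleproduct} and \ref{L:lowerthanone} yields the explicit bound $\norm{\nicefrac{1}{h}(\lt_h-I)-\lrate}\leq h\norm{\lrate}^2$, uniform over the unit sphere with no compactness needed. This is arguably cleaner and gives a rate; the paper's version buys reuse of general machinery (Propositions~\ref{prop:Luniform} and~\ref{prop:Quniform}) that it needs elsewhere anyway. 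The only place you are sketchier than the paper is the left derivative: writing $\lt_t-\lt_{t-h}=(\lt_h-I)\lt_{t-h}$ and factoring leaves a residual term $\lrate\lt_{t-h}-\lrate\lt_t$, which must be killed using \ref{LR:operatorinequality} together with $\norm{\lt_{t-h}-\lt_t}\leq\norm{\lt_h-I}\to0$ (from \ref{L:operatorinequality} and the semigroup property); the paper handles both one-sided derivatives at once with its $u=\min\{s,t\}$, $q=t-u$ decomposition and the analogous correction term $\norm{\lrate\lt_q-\lrate}$. Your ``joint continuity'' remark points at exactly this, so the gap is routine to close.
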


Finally, $\lt_t$ can also be defined directly, without any reference to a differential equation. The following simple limit expression resembles---and generalises---the well-known limit definition of a matrix exponential.

\begin{proposition}\label{prop:approximation}
Let $\lrate$ be a lower transition rate operator. Then
\begin{equation*}
\lt_t=\lim_{n\to\infty}(I+\frac{t}{n}\lrate)^n
\end{equation*}
for all $t\geq0$.
\end{proposition}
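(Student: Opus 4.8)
The plan is to mimic the classical proof that $e^{Qt}=\lim_n(I+\tfrac{t}{n}Q)^n$, but to replace the linear matrix estimates by the order-preserving norm inequalities available for lower transition operators. Fix $t>0$ (the case $t=0$ is immediate, since then every term equals $I=\lt_0$) and restrict attention to those $n\in\nats$ with $n\geq t\norm{\lrate}$, so that $\Delta\coloneqq t/n$ satisfies $\Delta\norm{\lrate}\leq1$. For such $n$, Proposition~\ref{prop:fromQtoL} guarantees that $B_n\coloneqq I+\tfrac{t}{n}\lrate$ is a lower transition operator, and the semigroup property~\eqref{eq:decomposition} together with Proposition~\ref{prop:islowertransoperator} gives $\lt_t=\lt_{n\Delta}=(\lt_\Delta)^n$, where $A_n\coloneqq\lt_\Delta$ is a lower transition operator. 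The goal is therefore to show that $\norm{A_n^n-B_n^n}\to0$.

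First I would establish the telescoping estimate $\norm{A_n^n-B_n^n}\leq n\,\norm{A_n-B_n}$. Writing $A_n^n-B_n^n=\sum_{k=0}^{n-1}\bigl(A_n^{\,n-k-1}(A_n B_n^{\,k})-A_n^{\,n-k-1}(B_n B_n^{\,k})\bigr)$, each power $A_n^{\,n-k-1}=\lt_{(n-k-1)\Delta}$ is a lower transition operator, so~\ref{L:operatorinequality} bounds the $k$-th summand by $\norm{A_n B_n^{\,k}-B_n B_n^{\,k}}$. Since $A_n-B_n$ is non-negatively homogeneous, \ref{N:gambleproduct} together with the fact that $\norm{B_n^{\,k}f}\leq\norm{B_n}^k\norm{f}\leq1$ for $\norm{f}=1$ (using~\ref{N:operatorproduct} and $\norm{B_n}\leq1$ from~\ref{L:lowerthanone}) gives $\norm{A_n B_n^{\,k}-B_n B_n^{\,k}}\leq\norm{A_n-B_n}$. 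Summing the $n$ terms yields the claimed bound.

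It then remains to show that $n\,\norm{A_n-B_n}=n\,\norm{\lt_\Delta-(I+\Delta\lrate)}\to0$. This is exactly where the differential equation enters: by Proposition~\ref{prop:operatordifferential}, $\lt_t$ is differentiable at $0$ with respect to the operator norm, with derivative $\lrate\lt_0=\lrate$, so $\norm{\lt_\Delta-I-\Delta\lrate}=\Delta\,\varepsilon(\Delta)$ for some function $\varepsilon$ with $\varepsilon(\Delta)\to0$ as $\Delta\downarrow0$. Substituting $\Delta=t/n$ gives $n\,\norm{A_n-B_n}=t\,\varepsilon(t/n)\to0$, and hence $\norm{\lt_t-(I+\tfrac{t}{n}\lrate)^n}\to0$, which is the desired convergence (equivalently pointwise, by Proposition~\ref{prop:Luniform}).

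The main obstacle is the telescoping step: because $A_n$ and $B_n$ are genuinely non-linear, the usual matrix identity $A^n-B^n=\sum A^{n-k-1}(A-B)B^k$ cannot be read off from distributivity, and I must instead invoke the non-expansiveness~\ref{L:operatorinequality} of composition by a lower transition operator together with the sub-multiplicativity~\ref{N:operatorproduct} of the operator norm to collapse each telescoped difference. Verifying that every intermediate operator appearing in the telescope is indeed a lower transition operator—so that~\ref{L:operatorinequality} applies—is the point that requires the most care.
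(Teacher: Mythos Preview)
Your proposal is correct and follows essentially the same route as the paper: both arguments establish the telescoping bound $\norm{(\lt_\Delta)^n-(I+\Delta\lrate)^n}\leq n\,\norm{\lt_\Delta-(I+\Delta\lrate)}$ via the non-expansiveness~\ref{L:operatorinequality} of left-composition by a lower transition operator together with $\norm{I+\Delta\lrate}\leq1$, and then close the argument using the operator-norm differentiability of $\lt_t$ at $t=0$ (the paper cites Lemma~\ref{lemma:boundaryderivative}, you cite the equivalent Proposition~\ref{prop:operatordifferential}). The only cosmetic difference is that the paper writes the telescope recursively while you write it as an explicit sum.
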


The operator $\lt_t$ also satisfies some additional properties, some of which are stated and proved in Appendices~\ref{app:differentialproofs} and~\ref{app:lowertransrateergodicproofs}. However, since these properties are rather technical, and because we only need them in our proofs, we have chosen not to include them in the main text. Nevertheless, some of these properties---especially those that are stated in Proposition~\ref{prop:minmaxproperty} and Corollary~\ref{corol:constantsignprob}---may be of independent interest to the reader.

\section{Ergodicity for lower transition rate operators}\label{sec:lowertransrateergodic}



 Having introduced our main object of study in the previous section, we now move on to study its limit behaviour and, in particular, the conditions under which $\lrate$ is ergodic. In the linear case, that is, if $\smash{\lrate}$ can be identified with an intensity matrix $Q$, then $Q$ is said to be ergodic if $\smash{e^{Qt}}$ converges to a matrix that has identical rows,\footnote{Again, as was the case for the discrete-time version that we discussed in Section~\ref{sec:lowertransergodic}, our use of this terminology is not universally adopted; our definition is equivalent to that of Tornamb\`e~\cite[Definition~4.17]{tornambè1995discrete}. There are also authors who use ergodicity to refer to a stronger property, which additionally requires that the identical rows of $\lim_{t\to+\infty}\smash{e^{Qt}}$ consist of strictly positive elements.} or equivalently, if for all $f\in\gamblesX$, $\lim_{t\to+\infty}\smash{e^{Qt}}f$ exists and is a constant function. We generalise this property to the non-linear case in the following way.

\begin{definition}[Ergodic lower transition rate operator]\label{def:lowertransrateergodic}
A lower transition rate operator $\lrate$ is ergodic if, for all $f\in\gamblesX$, $\lim_{t\to\infty}\lt_tf$ exists and is a constant function.
\end{definition}

As we explained in the introduction, this property is particularly important in the context of imprecise continuous-time Markov chains, as it can then be interpreted to mean that such an imprecise continuous-time Markov chain converges to a unique limiting distribution that does not depend on the initial state. However, for the purposes of this paper, it is not necessary to understand the details of this interpretation. Instead, we will regard ergodicity as a purely mathematical property. The main contribution of this section---and, more generally, this paper---is a simple necessary and sufficient condition for a lower transiton operator $\lrate$ to be ergodic.

Our first step towards finding this condition is to link the continuous-time type of ergodicity that is considered in Definition~\ref{def:lowertransrateergodic} to the discrete-time version that we discussed in Section~\ref{sec:lowertransergodic}. Our next result establishes that $\lrate$ is ergodic in the sense of Definition~\ref{def:lowertransrateergodic} if and only if, for some arbitrary but fixed time $t>0$, the operator $\lt_t$ is ergodic in the sense of Definition~\ref{def:lowertransergodic}.

\begin{proposition}\label{prop:firstequivalences}
Let $\lrate$ be a lower transition rate operator. Then for any $t>0$, $\lrate$ is ergodic if and only if $\lt_t$ is ergodic.
\end{proposition}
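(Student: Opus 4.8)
The plan is to exploit the semigroup property~\eqref{eq:decomposition}, which by a trivial induction gives $\lt_t^n=\lt_{nt}$ for all $n\in\nats$, together with the facts that each $\lt_s$ is a lower transition operator (Proposition~\ref{prop:islowertransoperator}) and is therefore nonexpansive in the sense of~\ref{L:gambleinequality} and fixes constant functions by~\ref{L:bounds}. These three ingredients reduce the whole statement to comparing a continuous limit with a subsequential one along the arithmetic progression $\{nt\}_{n\in\nats}$.

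For the \emph{only if} direction, I would assume $\lrate$ is ergodic, so that for every $f\in\gamblesX$ the limit $\lim_{s\to+\infty}\lt_sf$ exists and equals some constant function $c_f\ind{\states}$. Restricting this convergent family to the subsequence $s=nt$, $n\in\nats$, and rewriting $\lt_{nt}f=\lt_t^nf$, one immediately obtains $\lim_{n\to+\infty}\lt_t^nf=c_f\ind{\states}$, which is precisely the statement that $\lt_t$ is ergodic. This half is essentially free.

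For the \emph{if} direction, fix $t>0$ and suppose $\lt_t$ is ergodic, so that for each $f$ there is a constant $c_f$ with $\lt_{nt}f=\lt_t^nf\to c_f\ind{\states}$. To upgrade this subsequential limit to the full limit as $s\to+\infty$, I would write an arbitrary $s\geq0$ as $s=nt+r$ with $n=\lfloor s/t\rfloor$ and $r\in[0,t)$, and factor $\lt_s=\lt_r\lt_{nt}$ using~\eqref{eq:decomposition}. Since $\lt_r$ fixes constants by~\ref{L:bounds}, we have $\lt_r(c_f\ind{\states})=c_f\ind{\states}$, and then nonexpansiveness~\ref{L:gambleinequality} gives
\[
\norm{\lt_sf-c_f\ind{\states}}=\norm{\lt_r(\lt_{nt}f)-\lt_r(c_f\ind{\states})}\leq\norm{\lt_{nt}f-c_f\ind{\states}}.
\]
As $s\to+\infty$ we have $n\to+\infty$ (because $r$ remains bounded in $[0,t)$), so the right-hand side tends to $0$; hence $\lt_sf\to c_f\ind{\states}$ for every $f$, and $\lrate$ is ergodic.

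The only genuinely nontrivial point is this converse: ergodicity of $\lt_t$ controls $\lt_s$ only along the discrete times $\{nt\}$, and one must fill the gap for the continuum of intermediate times. I expect this to be the crux, but it is tamed by the key observation that the ``remainder'' factor $\lt_r$ ranges over lower transition operators that are \emph{uniformly} nonexpansive and fix constants, so the single bound~\ref{L:gambleinequality} suffices and no control uniform in $r$ is needed. The two things worth verifying explicitly are that constants are fixed by every $\lt_r$ (immediate from~\ref{L:bounds}) and that the factorisation $\lt_s=\lt_r\lt_{nt}$ is legitimate (just the semigroup property~\eqref{eq:decomposition}).
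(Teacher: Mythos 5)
Your proof is correct and follows essentially the same route as the paper's: both directions rest on the semigroup identity $\lt_t^n=\lt_{nt}$, and the converse is handled by factoring $\lt_s$ into a power of $\lt_t$ times a remainder operator that is nonexpansive and fixes constants. The only (cosmetic) difference is bookkeeping: you let the power $n=\lfloor s/t\rfloor$ grow and keep the remainder $r\in[0,t)$ bounded, whereas the paper fixes the power at $n_\epsilon$ and lets the remainder $\lt_{s-s_\epsilon}$ absorb the growth, bounding it via \ref{L:lowerthanone} instead of \ref{L:gambleinequality}.
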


At first sight---at least to us---this result is rather surprising. Since the ergodicity of $\lrate$ is a property that depends on the evolution of $\lt_t$ as $t$ approaches infinity, one would not suspect such a property to be completely determined by the features of a single operator $\lt_t$, on an arbitrary time point $t>0$. Nevertheless, as the above result shows, this is indeed the case.

By combining this result with Proposition~\ref{prop:filip}, we immediately obtain the following alternative characterisation of ergodicity.

\begin{corollary}\label{corol:ergodiciffdiscreteregularlyabsorbing}
Let $\lrate$ be a lower transition rate operator. Then for any $t>0$, $\lrate$ is ergodic if and only if $\lt_t$ is regularly absorbing.
\end{corollary}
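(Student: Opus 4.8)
The plan is to derive Corollary~\ref{corol:ergodiciffdiscreteregularlyabsorbing} as an immediate consequence of two results already at hand, namely Proposition~\ref{prop:firstequivalences} and Proposition~\ref{prop:filip}. The corollary is stated as a biconditional holding for \emph{any} fixed $t>0$, so I would fix an arbitrary such $t$ at the outset and argue the equivalence for that $t$. The key observation is that both ingredients are about the \emph{same} operator $\lt_t$: Proposition~\ref{prop:firstequivalences} relates ergodicity of $\lrate$ to ergodicity of $\lt_t$ in the discrete-time sense of Definition~\ref{def:lowertransergodic}, while Proposition~\ref{prop:filip} relates ergodicity of a lower transition operator to its being regularly absorbing.

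First I would invoke Proposition~\ref{prop:islowertransoperator} to note that for the fixed $t>0$ the operator $\lt_t$ is itself a lower transition operator; this is what licenses us to apply the discrete-time machinery of Sections~\ref{sec:lowertransergodic}--\ref{sec:lowertransrateergodic} to $\lt_t$. Next I would apply Proposition~\ref{prop:firstequivalences} to obtain the equivalence ``$\lrate$ is ergodic $\asa$ $\lt_t$ is ergodic,'' where ergodicity of $\lt_t$ is understood in the sense of Definition~\ref{def:lowertransergodic}, i.e.\ $\lim_{n\to\infty}\lt_t^n f$ exists and is constant for every $f\in\gamblesX$. Then, since $\lt_t$ is a lower transition operator, Proposition~\ref{prop:filip} gives the further equivalence ``$\lt_t$ is ergodic $\asa$ $\lt_t$ is regularly absorbing.'' Chaining these two biconditionals yields ``$\lrate$ is ergodic $\asa$ $\lt_t$ is regularly absorbing,'' which is exactly the claim for the fixed $t$; since $t>0$ was arbitrary, the corollary follows for all $t>0$.

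I do not anticipate any genuine obstacle here, because the real mathematical content is already packaged into the two propositions being cited; the corollary is a pure composition of equivalences. The only point requiring a modicum of care is to ensure the two notions of ``ergodic'' are applied to the correct object: in Proposition~\ref{prop:firstequivalences} the word ``ergodic'' applied to $\lt_t$ must be read as the discrete-time Definition~\ref{def:lowertransergodic} (iterating $\lt_t$ as $\lt_t^n$), and this is precisely the same notion that Proposition~\ref{prop:filip} characterises via regular absorption. Once this alignment is made explicit, the proof is a one-line chaining of the two biconditionals and needs no further computation.
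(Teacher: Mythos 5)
Your proposal is correct and matches the paper's own proof exactly: the paper derives the corollary as an immediate consequence of Propositions~\ref{prop:filip}, \ref{prop:islowertransoperator} and~\ref{prop:firstequivalences}, which is precisely the chain of equivalences you describe. No gaps or differences to note.
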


This result is clearly a good first step in obtaining a simple charaterisation of ergodicity. Indeed, due to this result, instead of having to compute---or approximate---the limit behaviour of $\lt_t$ as $t$ approaches infinity, it now suffices to restrict attention to a single time point $t>0$, which we can even choose ourselves, and to check whether for this time point $t$, the operator $\lt_t$ is regularly absorbing. Furthermore, as our next result establishes, checking whether this particular type of lower transition operator is regularly absorbing is easier than it is for general lower transition operators: in this special case, being regularly absorbing is equivalent to being $1$-step absorbing.

\begin{proposition}\label{prop:regulariffonestep}
Let $\lrate$ be a lower transition rate operator. Then for any $t\geq0$, $\lt_t$ is regularly absorbing if and only if it is 1-step absorbing.
\end{proposition}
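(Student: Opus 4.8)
The plan is to prove only the nontrivial implication, that $\lt_t$ regularly absorbing implies $\lt_t$ 1-step absorbing; the converse is immediate, since taking $n\coloneqq1$ in Definition~\ref{def:regularlyabsorbing} recovers Definition~\ref{def:onestepabsorbing} (as already observed in the text). For $t=0$ both notions are trivial because $\lt_0=I$, so fix $t>0$ and write $\lt\coloneqq\lt_t$, $\ut\coloneqq\ut_t$; by the semigroup property~\eqref{eq:decomposition} and conjugacy~\eqref{eq:Tconjugacy} we have $\lt^{n}=\lt_{nt}$ and $\ut^{n}=\ut_{nt}$. The governing idea is that, for these particular operators, whether a given strict positivity holds at time $nt$ does not depend on $n$; once this \emph{time-independence of reachability} is established, the existential quantifiers ``$\exists n$'' in Definition~\ref{def:regularlyabsorbing} collapse to $n=1$, which is exactly Definition~\ref{def:onestepabsorbing}.

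The first ingredient is \textbf{diagonal positivity}: $\lt_s\ind{x}(x)>0$ for every $s\ge0$ and $x\in\states$. I would prove this by a Gr\"onwall argument. Writing $g_s\coloneqq\lt_s\ind{x}\ge0$, Proposition~\ref{prop:operatordifferential} together with continuity of pointwise evaluation gives $\tfrac{d}{ds}g_s(x)=\lrate(g_s)(x)$, and since $\lrate(\ind{x})(x)\le0$ (by~\ref{LR:diagonalnegative} and~\ref{LR:lowerbelowupper}) property~\ref{LR:gamblebound} yields $\tfrac{d}{ds}g_s(x)\ge\lrate(\ind{x})(x)\,g_s(x)$, whence $g_s(x)\ge e^{\lrate(\ind{x})(x)s}>0$; by~\ref{L:bounds} this also gives $\ut_s\ind{x}(x)>0$. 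A short consequence—\textbf{forward preservation}—is that if $\lt_s\ind{S}(x)>0$ then $\lt_{s'}\ind{S}(x)>0$ for all $s'\ge s$: using the semigroup property, $\lt_{s'}\ind{S}(x)=\lt_{s'-s}\bigl(\lt_s\ind{S}\bigr)(x)\ge\lt_s\ind{S}(x)\cdot\lt_{s'-s}\ind{x}(x)>0$ by monotonicity~\ref{L:monotone}, homogeneity~\ref{L:homo} and diagonal positivity; the analogue holds for $\ut$.

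The heart of the proof is to characterise reachability purely in terms of $\lrate$, independently of $t$. For the top class I would show that $\ut_s\ind{y}(x)>0$ for some $s>0$ if and only if there is a directed path from $x$ to $y$ in the graph whose edges $u\to v$ are the pairs with $\urate(\ind{v})(u)>0$, if and only if $\ut_s\ind{y}(x)>0$ for all $s>0$. Sufficiency of a path follows from the first-order expansion $\ut_\delta=I+\delta\urate+o(\delta)$ (so $\ut_\delta\ind{v}(u)>0$ for small $\delta$ whenever $\urate(\ind{v})(u)>0$), the subdivision $\ut_t=(\ut_{t/m})^{m}$ with $m$ large, and a backward induction along the path padded with diagonal steps, all via~\ref{L:monotone} and homogeneity. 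Necessity is where the \emph{main obstacle} lies: because the rate operators $\lrate,\urate$ are \emph{not} monotone, a direct differential-inequality comparison fails. I would instead argue at the level of the single-step \emph{lower transition operator} $A\coloneqq I+\tfrac{t}{n}\lrate$, which \emph{is} monotone. Letting $R$ be the set of vertices reachable from $x$, forward-closedness gives $\urate(\ind{v})(u)=0$ for $u\in R$, $v\notin R$, and hence, using sub/superadditivity and~\ref{LR:nondiagpositive}, $\lrate(\ind{R})(u)=-\urate(\ind{R^{c}})(u)=0$ on $R$; thus $A\ind{R}=1$ on $R$, and since $A$ is a monotone lower transition operator fixing the constant $1$ one gets $A^{n}\ind{R}=1$ on $R$ for all $n$, which by the limit formula of Proposition~\ref{prop:approximation} passes to $\lt_t\ind{R}(x)=1$. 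Consequently $\ut_t\ind{y}(x)\le\ut_t\ind{R^{c}}(x)=1-\lt_t\ind{R}(x)=0$ for $y\notin R$ (using~\ref{L:constantadditivity} and~\ref{L:monotone}). Since path-existence does not depend on the time $nt$, this yields $\mathcal{X}_{\mathrm{RA}}=\mathcal{X}_{\mathrm{1A}}$.

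Finally, I would establish the analogous time-independence for \textbf{lower reachability to a set}. Define the closure $S^{\ast}$ of a set $S$ by iterating $S\mapsto S\cup\{u:\lrate(\ind{S})(u)>0\}$; then $\lt_s\ind{S}(x)>0$ for some $s>0$ iff $x\in S^{\ast}$ iff it holds for all $s>0$. Sufficiency again follows by a path/first-order induction seeded by diagonal positivity; for necessity, for $u\notin S^{\ast}$ one has $\lrate(\ind{S^{\ast}})(u)=0$, so $A\ind{S^{\ast}}=0$ on $(S^{\ast})^{c}$ with $A\ind{S^{\ast}}\le\ind{S^{\ast}}$, and the same monotone iteration together with Proposition~\ref{prop:approximation} give $\lt_t\ind{S^{\ast}}=0$, hence $\lt_t\ind{S}\le\lt_t\ind{S^{\ast}}=0$, on $(S^{\ast})^{c}$. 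Applying this with $S=\mathcal{X}_{\mathrm{RA}}=\mathcal{X}_{\mathrm{1A}}$ turns top-class absorption, ``$\exists n:\lt_{nt}\ind{\mathcal{X}_{\mathrm{RA}}}(x)>0$'', into ``$x\in\mathcal{X}_{\mathrm{RA}}^{\ast}$'', which by time-independence is exactly ``$\lt_t\ind{\mathcal{X}_{\mathrm{1A}}}(x)>0$'', i.e.\ 1-step absorption. Combining the two collapses completes the nontrivial direction.
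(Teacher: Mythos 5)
Your proposal is correct, but it reaches the conclusion by a genuinely different route than the paper. The paper's proof of Proposition~\ref{prop:regulariffonestep} is a two-line reduction to Corollary~\ref{corol:constantsignprob}, whose substance lies in Lemmas~\ref{lemma:constantsignlower} and~\ref{lemma:constantsignupper}: there, the time-independence of the sign of $\lt_\tau f(x)-\min f$ is proved for \emph{arbitrary} $f\in\gamblesX$, by first showing that the sets $\states_\tau=\{y\colon\lt_\tau f(y)>\min f\}$ increase with $\tau$ and hence stabilise on some initial interval $(0,\tau^*]$, and then propagating this to all $\tau>\tau^*$ via the domination $\lt_\tau f-\min f\leq\lambda^{n-1}(\lt_{\nicefrac{\tau}{n}}f-\min f)$. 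You instead prove time-independence only for indicator functions, by characterising the strict positivity of $\ut_t\ind{x}(y)$ and $\lt_t\ind{A}(x)$ combinatorially in terms of $\lrate$ (graph paths, respectively the closure $S^*$), which manifestly does not depend on $t$; in effect you front-load special cases of Propositions~\ref{prop:iffuperreachable} and~\ref{prop:ifflowerreachable} and derive Proposition~\ref{prop:regulariffonestep} from them, whereas the paper proves those two propositions \emph{after} this one and uses Corollary~\ref{corol:constantsignprob} inside their proofs. Your key technical moves all check out: the Gr\"onwall bound $\lt_s\ind{x}(x)\geq e^{\lrate(\ind{x})(x)s}>0$ is exactly the paper's Lemma~\ref{lemma:stayingabovetheminimum} specialised; the identity $\lrate(\ind{R})(u)=-\urate(\ind{\states\setminus R})(u)=0$ on a forward-closed set $R$, the passage to the monotone operator $I+\nicefrac{t}{n}\lrate$ to sidestep the non-monotonicity of $\lrate$, and the limit via Proposition~\ref{prop:approximation} correctly give $\lt_t\ind{R}=1$ on $R$ (and dually $\lt_t\ind{S^*}=0$ off $S^*$); and your equal-step subdivision with diagonal padding replaces the paper's appeal to Corollary~\ref{corol:constantsignprob} at the end of the sufficiency arguments. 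What each approach buys: the paper's yields the stronger and independently interesting Proposition~\ref{prop:minmaxproperty} for general $f$, while yours is more elementary, avoids the stabilisation-plus-scaling lemma entirely, and delivers as a by-product the reachability characterisations that the paper needs anyway for Theorem~\ref{theo:finalequivalence}. The only caveat is that your text is a detailed sketch; the backward inductions along paths and the uniformity of the $o(\delta)$ term in $\ut_\delta=I+\delta\urate+o(\delta)$ (which follows from Lemma~\ref{lemma:boundaryderivative}) would need to be written out, but there is no gap in the underlying argument.
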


By combining this result with Corollary~\ref{corol:ergodiciffdiscreteregularlyabsorbing}, we immediately obtain yet another necessary and sufficient condition for $\lrate$ to be ergodic.

\begin{corollary}\label{corol:ergodiciffdiscrete1stepabsorbing}
Let $\lrate$ be a lower transition rate operator. Then for any $t>0$, $\lrate$ is ergodic if and only if $\lt_t$ is $1$-step absorbing.
\end{corollary}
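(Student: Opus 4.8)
The plan is to obtain this corollary purely by transitivity, chaining together the two equivalences that have already been established just before it. Fix an arbitrary $t>0$. By Corollary~\ref{corol:ergodiciffdiscreteregularlyabsorbing}, $\lrate$ is ergodic if and only if $\lt_t$ is regularly absorbing. By Proposition~\ref{prop:regulariffonestep}, which holds for all $t\geq0$ and in particular for our fixed $t>0$, $\lt_t$ is regularly absorbing if and only if $\lt_t$ is $1$-step absorbing. Concatenating these two biconditionals yields exactly the claimed equivalence: $\lrate$ is ergodic if and only if $\lt_t$ is $1$-step absorbing.

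The only bookkeeping point worth checking is that the ranges of the time parameter match up. Corollary~\ref{corol:ergodiciffdiscreteregularlyabsorbing} is stated for $t>0$, while Proposition~\ref{prop:regulariffonestep} is stated for the wider range $t\geq0$; since the intersection of these ranges is $t>0$, and since we are quantifying over $t>0$ in the statement to be proved, both ingredients are available simultaneously for every admissible $t$. No further argument is needed, and in particular none of the substantive work---relating convergence as $t\to+\infty$ to a single $\lt_t$, or showing that regular absorption collapses to $1$-step absorption for this special family---has to be redone here, as it is already encapsulated in the cited results.

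I therefore expect no real obstacle: this is a one-line deduction, and the difficulty has been front-loaded into Proposition~\ref{prop:firstequivalences}, Proposition~\ref{prop:regulariffonestep} and the intermediate Corollary~\ref{corol:ergodiciffdiscreteregularlyabsorbing}. The proof can be written in a single sentence that invokes the two results and notes the agreement of the parameter ranges.
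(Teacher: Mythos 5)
Your proposal is correct and is essentially identical to the paper's own proof, which likewise obtains the corollary by simply chaining Corollary~\ref{corol:ergodiciffdiscreteregularlyabsorbing} with Proposition~\ref{prop:regulariffonestep}. The remark about the parameter ranges ($t>0$ versus $t\geq0$) is accurate and harmless.
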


Because of this result, checking whether $\lrate$ is ergodic is now reduced to checking whether $\lt_t$ is $1$-step absorbing, for some arbitrary but fixed $t>0$. Although this is already easier than studying the limit behaviour of $\lt_t$ directly, it is still non-trivial. As can be seen from Definition~\ref{def:onestepabsorbing}, it requires us to evaluate the strict positivity of numbers that are of the form $\ut_{\hspace{-1.5pt}t}\ind{x}(y)$ and $\lt_t\ind{A}(x)$, with $x,y\in\states$ and $A\subseteq\states$. At first sight, this still seems to be a rather cumbersome task that will involve either solving the differential Equation~\eqref{eq:differential} or applying the limit expression in Proposition~\ref{prop:approximation}. However, as it turns out, this is not the case.

Indeed, as we are about to show, the strict positivity of $\ut_{\hspace{-1.5pt}t}\ind{x}(y)$ and $\lt_t\ind{A}(x)$ does not depend on the specific value of $t$, but only on the lower transition operator $\lrate$. 
In order to make this specific, we introduce the following notions of upper and lower reachability.

\begin{definition}[Upper reachability]\label{def:upperreachability}
For any $x,y\in\states$, we say that $x$ is upper reachable from $y$, and denote this by \,$\smash{y\dotabovearrow x}$,
if there is some sequence $y=x_0,\dots,x_n=x$ such that, for all $k\in\{1,\dots,n\}$:
\begin{equation*}
x_k\neq x_{k-1}\text{ and }\urate(\ind{x_k})(x_{k-1})>0.
\vspace{2mm}
\end{equation*}
\end{definition}

\begin{definition}[Lower reachability]\label{def:lowerreachability}
For any $x\in\states$ and $A\subseteq\states$, we say that $A$ is lower reachable from $x$, and denote this by $x\dotbelowarrow A$,
 if $x\in A_n$, where $\{A_k\}_{k\in\natswith}$ is an increasing sequence that is defined by $A_0\coloneqq A$ and
\begin{equation}\label{eq:lowerreachability}
A_{k+1}\coloneqq A_k\cup\{y\in\states\setminus A_k\colon \lrate(\ind{A_k})(y)>0\}
\text{~~for all $k\in\natswith$},
\end{equation}
and where $n$ is the first index such that $A_n=A_{n+1}$.
\end{definition}

An important property of both of these two notions is that they are easy to check. For upper reachability, it suffices to draw a directed graph that has the elements of $\states$ as its nodes and which features an arrow from $y$ to $x$ if and only if $\urate(\ind{x})(y)>0$. Checking whether $y$ is upper reachable from $x$ is then clearly equivalent to checking whether it is possible to start in $x$ and follow the arrows in the graph to reach $y$. This is a standard reachability problem that can either be solved manually, or by means of techniques from graph theory. Lower reachability essentially requires us to construct the sequence $\{A_k\}_{k\in\natswith}$ up to the index $n$. Since it follows from the increasing nature of this sequence that $n\leq\abs{\states\setminus A}$, this too is a straightforard task.


The reason why we are interested in these notions of lower and upper reachability are the following two equivalences.

\begin{proposition}\label{prop:iffuperreachable}
Let $\lrate$ be a lower transition rate operator. Then for any $t>0$ and any $x,y\in\states$: 
\begin{equation*}
\ut_{\hspace{-1.5pt}t}\ind{x}(y)>0
~~\Leftrightarrow~~
y\dotabovearrow x.
\vspace{2mm}
\end{equation*}
\end{proposition}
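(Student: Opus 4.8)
The plan is to prove both implications by relating the strict positivity of $\ut_{\hspace{-1.5pt}t}\ind{x}(y)$ to the existence of a chain of intermediate states along which the off-diagonal rate is positive. The key observation is that, by the conjugacy relation~\eqref{eq:Qconjugacy} and the fact that $\ut_{\hspace{-1.5pt}t}=-\lt_t(-\cdot)$, working with upper operators is natural here, and that $\urate$ controls the \emph{instantaneous} behaviour of $\ut_{\hspace{-1.5pt}t}$ near $t=0$. I would first establish the reverse implication ($y\dotabovearrow x \then \ut_{\hspace{-1.5pt}t}\ind{x}(y)>0$) via a single-step analysis, and then bootstrap it to arbitrary chains using the semigroup property~\eqref{eq:decomposition}.

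For the single-step case, suppose $\urate(\ind{x})(z)>0$ with $z\neq x$; I would show that $\ut_{\hspace{-1.5pt}s}\ind{x}(z)>0$ for all sufficiently small $s>0$. The natural tool is the limit expression of Proposition~\ref{prop:approximation} together with its conjugate form for $\ut$, or equivalently a Taylor-type expansion: since $\frac{d}{dt}\lt_t=\lrate\lt_t$ (Proposition~\ref{prop:operatordifferential}), the conjugate derivative of $\ut_{\hspace{-1.5pt}t}$ at $t=0$ is governed by $\urate$, so $\ut_{\hspace{-1.5pt}s}\ind{x}(z)\approx \ind{x}(z)+s\,\urate(\ind{x})(z)=s\,\urate(\ind{x})(z)>0$ for small $s$. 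To chain steps, given a path $y=x_0,\dots,x_n=x$, I would use $\ut_{\hspace{-1.5pt}t}=\ut_{\hspace{-1.5pt}t/n}\cdots\ut_{\hspace{-1.5pt}t/n}$ (the conjugate of~\eqref{eq:decomposition}) and the monotonicity property~\ref{L:monotone} applied to $\ut$, propagating strict positivity one state at a time along the path. Here I expect to need a lower bound of the form $\ut_{\hspace{-1.5pt}s}\ind{A}(w)\geq \ut_{\hspace{-1.5pt}s}\ind{x_{k+1}}(w)$ whenever $x_{k+1}\in A$, combined with the fact that $\ut_{\hspace{-1.5pt}s}$ preserves positivity because of~\ref{L:bounds}.

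For the forward implication ($\ut_{\hspace{-1.5pt}t}\ind{x}(y)>0 \then y\dotabovearrow x$), I would argue by contraposition. If $y$ is not upper reachable from $x$, let $A$ be the set of all states upper reachable from $y$ (including $y$); then $x\notin A$ and, by Definition~\ref{def:upperreachability}, the complement of $A$ is ``closed'' in the sense that $\urate(\ind{w})(v)=0$ for every $v\in A$ and $w\notin A$ with $w\neq v$. The plan is to show that this structural closure forces $\ut_{\hspace{-1.5pt}t}\ind{x}(v)=0$ for all $v\in A$ and all $t\geq0$, in particular at $v=y$. I would do this by showing that the quantity $\ut_{\hspace{-1.5pt}t}\ind{A^c}(v)$ stays identically zero on $A$: its derivative is controlled by $\urate$ evaluated on the relevant indicators, and the closure condition makes this derivative vanish on $A$, so by uniqueness of the solution the value never leaves zero. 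Since $\ind{x}\leq\ind{A^c}$ on the support that matters, monotonicity then gives $\ut_{\hspace{-1.5pt}t}\ind{x}(y)=0$.

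The main obstacle I anticipate is the forward direction, specifically justifying rigorously that the closure condition on $\urate$ propagates to a permanent zero for $\ut_{\hspace{-1.5pt}t}$ for \emph{all} $t$, not just infinitesimally. The superadditivity of $\lrate$ means $\urate$ is \emph{subadditive}, so one cannot simply decompose indicators linearly; I would instead need an invariance argument using the operator differential equation~\eqref{eq:operatordifferential} and a uniqueness/Gr\"onwall-type estimate showing that if a nonnegative quantity starts at zero and has a nonpositive upper bound on its derivative wherever it is zero, it remains zero. Making this precise while respecting non-linearity — likely through the monotonicity property~\ref{L:monotone} and the bound~\ref{LR:gamblebound} applied to the restricted set $A$ — is where the real work lies, and I suspect the cleanest route is to prove an auxiliary lemma (perhaps one of the technical lemmas alluded to in the appendix) stating that the set $A$ is invariant under $\ut_{\hspace{-1.5pt}t}$ in exactly this sense.
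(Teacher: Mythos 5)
Your reverse implication has a genuine gap at the chaining step. The single-step analysis only yields $\ut_s\ind{x_k}(x_{k-1})>0$ for all \emph{sufficiently small} $s>0$, yet you then propose to factor $\ut_t=\ut_{t/n}\cdots\ut_{t/n}$ for the \emph{given} $t$, and $t/n$ (with $n$ the fixed length of the path) need not be small. What is missing is the fact that the strict positivity of $\ut_s\ind{x}(y)$ does not depend on the time point $s>0$: this is the paper's Corollary~\ref{corol:constantsignprob}, which rests on Lemmas~\ref{lemma:stayingabovetheminimum}--\ref{lemma:constantsignupper} and, at bottom, on the exponential lower bound $\lt_sf(x)-\min f\geq(f(x)-\min f)e^{[\lrate(\ind{x})(x)]s}$ obtained from \ref{LR:gamblebound} and Equation~\eqref{eq:differential}. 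Without some such persistence result your argument only delivers $\ut_\epsilon\ind{x}(y)>0$ for one particular $\epsilon=\sum_k\epsilon_k$, not for the arbitrary $t>0$ of the statement. The chaining itself is exactly the paper's: $\ut_{\epsilon_k}\ind{x_k}\geq c_k\ind{x_{k-1}}$ with $c_k>0$, then \ref{L:monotone}, \ref{L:homo} and the semigroup property~\eqref{eq:decomposition}. But the time-independence of the sign is a substantive lemma occupying a good part of the appendix, and your plan does not identify the need for it.

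Your forward implication takes a genuinely different, and viable, route. The paper does not argue by contraposition: it replaces $\ut_t$ by $(I+\Delta\urate)^n$ via Proposition~\ref{prop:approximation}, expands this power using subadditivity of $\ut_*=I+\Delta\urate$ into sums over chains of the non-negative coefficients $c(w,z)=\ut_*\ind{z}(w)$, and extracts from the positivity of the result a chain with $c(x_{k-1},x_k)>0$, hence $\urate(\ind{x_k})(x_{k-1})>0$ after discarding repeated states. Your invariance argument does work: with $A$ the set of states upper reachable from $y$ (so $x\notin A$), closedness gives $\urate(\ind{w})(v)=0$ for $v\in A$, $w\notin A$, $w\neq v$; since $g_t\coloneqq\ut_t\ind{\states\setminus A}\geq0$, subadditivity and non-negative homogeneity of $\urate$ give $\urate(g_t)(v)\leq\sum_{w}g_t(w)\urate(\ind{w})(v)$, the closure kills the terms with $w\notin A$, and a Gr\"onwall bound on $h(t)=\sum_{v\in A}g_t(v)$, which starts at zero, forces $h\equiv0$; then \ref{L:monotone} and $\ind{x}\leq\ind{\states\setminus A}$ together with \ref{L:bounds} give $\ut_t\ind{x}(y)=0$. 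This is more conceptual than the paper's combinatorial expansion, at the price of an analytic Gr\"onwall step where the paper's argument is purely algebraic. (A minor slip: $y\dotabovearrow x$ means $x$ is upper reachable from $y$, so the contrapositive hypothesis is that $x$ is not upper reachable from $y$; your construction of $A$ is consistent with this correct reading even though your phrasing inverts it.)
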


\begin{proposition}\label{prop:ifflowerreachable}
Let $\lrate$ be a lower transition rate operator. Then for any $t>0$, any $x\in\states$ and any $A\subseteq\states$:
\begin{equation*}
\lt_t\ind{A}(x)>0
~~\Leftrightarrow~~
x\dotbelowarrow A.
\vspace{2mm}
\end{equation*}
\end{proposition}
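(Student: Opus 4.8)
The plan is to prove the two implications separately, organising everything around the sets $\{A_k\}_{k\in\natswith}$ of Definition~\ref{def:lowerreachability}, which stabilise at $A_n$ so that $x\dotbelowarrow A$ means exactly $x\in A_n$. Since $\ind{A}\geq 0$, monotonicity \ref{L:monotone} together with $\lt_t 0=0$ gives $\lt_t\ind{A}\geq 0$, so the whole question is one of \emph{strict} positivity. The two workhorses are the limit expression $\lt_t=\lim_{m\to\infty}(I+\tfrac{t}{m}\lrate)^m$ of Proposition~\ref{prop:approximation}, which for large $m$ makes $I+\tfrac{t}{m}\lrate$ a lower transition operator by Proposition~\ref{prop:fromQtoL}, and the differential Equation~\eqref{eq:differential} combined with the pointwise bound \ref{LR:gamblebound}. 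I will use the discrete scheme for one implication and the differential equation for the other.

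For $\lt_t\ind{A}(x)>0\then x\dotbelowarrow A$ I would argue by the contrapositive, tracking supports along the discrete scheme. Writing $S:=\{y\in\states\colon f(y)>0\}$ for the support of an $f\geq 0$, I first show that for $0<\Delta\norm{\lrate}<1$ the support of $(I+\Delta\lrate)f$ is contained in $S\cup\{y\in\states\setminus S\colon\lrate(\ind{S})(y)>0\}$. The only point to check is that $(I+\Delta\lrate)f(y)=\Delta\lrate(f)(y)=0$ whenever $y\notin S$ and $\lrate(\ind{S})(y)\leq 0$: decomposing $\norm{f}\,\ind{S}=f+h$ with $h\geq 0$ and using superadditivity together with $\lrate(h)(y)\geq 0$ (from \ref{LR:gamblebound}, since $h(y)=0=\min h$) gives $0\leq\lrate(f)(y)\leq\norm{f}\,\lrate(\ind{S})(y)\leq 0$. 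Iterating from $f=\ind{A}$ shows, by induction and the very definition \eqref{eq:lowerreachability}, that the support of $(I+\Delta\lrate)^m\ind{A}$ is contained in $A_m\subseteq A_n$ for every $m$ and every small $\Delta$. Hence if $x\notin A_n$ then $(I+\tfrac{t}{m}\lrate)^m\ind{A}(x)=0$ for all large $m$, and letting $m\to\infty$ through Proposition~\ref{prop:approximation} (the convergence being pointwise by Proposition~\ref{prop:Luniform}) yields $\lt_t\ind{A}(x)=0$, which is the desired contrapositive.

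For the converse $x\dotbelowarrow A\then\lt_t\ind{A}(x)>0$ I would induct on the least index $j$ with $x\in A_j$, using the differential equation rather than the discrete scheme, because the finite-step argument above survives the limit only as a non-negativity statement. Fix $x$ and set $g_t:=\lt_t\ind{A}$. For the base case $x\in A_0=A$, evaluating \eqref{eq:differential} at $x$ and applying \ref{LR:gamblebound} with $\min g_t\geq 0$ and $\lrate(\ind{x})(x)\leq 0$ gives $\tfrac{d}{dt}g_t(x)=\lrate(g_t)(x)\geq g_t(x)\,\lrate(\ind{x})(x)$; since $g_0(x)=1$, a Gr\"onwall/comparison argument yields $g_t(x)\geq e^{\lrate(\ind{x})(x)\,t}>0$. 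For the inductive step, suppose $x\in A_{j+1}\setminus A_j$, so that $\lrate(\ind{A_j})(x)>0$, and assume $g_s(z)>0$ for all $z\in A_j$ and all $s>0$. On any interval $[a,b]$ with $a>0$, continuity of $(z,s)\mapsto g_s(z)$ and the induction hypothesis give $\epsilon:=\min\{g_s(z)\colon z\in A_j,\ s\in[a,b]\}>0$, whence $g_s\geq\epsilon\,\ind{A_j}$ on all of $\states$. Writing $g_s=\epsilon\,\ind{A_j}+r_s$ with $r_s\geq 0$ and $r_s(x)=g_s(x)$, superadditivity of $\lrate$ and \ref{LR:gamblebound} give $\tfrac{d}{dt}g_t(x)\geq\epsilon\,\lrate(\ind{A_j})(x)+g_t(x)\,\lrate(\ind{x})(x)$ on $[a,b]$. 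This is a linear differential inequality with strictly positive constant forcing $\epsilon\,\lrate(\ind{A_j})(x)$, so comparison with the corresponding linear ODE (and $g_a(x)\geq 0$) forces $g_t(x)>0$ on $(a,b]$; as $a>0$ and $b$ are arbitrary, $g_t(x)>0$ for all $t>0$, completing the induction.

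The main obstacle is precisely this converse direction. The discrete support argument shows $(I+\tfrac{t}{m}\lrate)^m\ind{A}(x)>0$ for reachable $x$, but a limit of positive numbers may vanish, so finite-step positivity is not enough to conclude $\lt_t\ind{A}(x)>0$. Converting reachability into a strictly positive forcing term in the differential inequality---via the compactness/continuity step that extracts a uniform $\epsilon>0$ on $A_j$ and feeds it through superadditivity and \ref{LR:gamblebound}---is the crux, after which the comparison principle delivers genuine positivity of the limit $g_t(x)=\lt_t\ind{A}(x)$.
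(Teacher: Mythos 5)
Your proof is correct, but it takes a genuinely different route from the paper's in both directions, so a comparison is worthwhile. For the forward implication the paper does not pass to supports: it shows directly that $\lt_*\ind{A_k}\leq c_k\ind{A_{k+1}}$ for the discrete operator $\lt_*\coloneqq I+\Delta\lrate$ and chains these inequalities using \ref{L:monotone}, which sidesteps the one point your induction leaves implicit --- namely that the support $S_m$ of $(I+\Delta\lrate)^m\ind{A}$ is a priori only known to be a subset of $A_m$, so to get $S_{m+1}\subseteq A_{m+1}$ from your one-step support bound you still need that $y\notin A_m$ and $\lrate(\ind{S_m})(y)>0$ imply $\lrate(\ind{A_m})(y)>0$; this follows in one line from \ref{LR:subadditive} and \ref{LR:nondiagpositive} (and in fact your own estimate $\lrate(f)(y)\geq\epsilon\lrate(\ind{S})(y)$ shows the supports are exactly the sets $A_m$), but it should be said. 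The more substantial divergence is in the converse. The paper uses the differential equation only at $s=0$ to produce small times $\epsilon_k>0$ with $\lt_{\epsilon_k}\ind{A_k}\geq c_k\ind{A_{k+1}}$, chains these via the semigroup property \eqref{eq:decomposition} to get positivity at the single time $\epsilon=\sum_k\epsilon_k$, and then transfers to the given $t$ by invoking Corollary~\ref{corol:constantsignprob} (time-independence of strict positivity, which rests on Lemmas~\ref{lemma:stayingabovetheminimum}--\ref{lemma:constantsignlower}). You instead run a Gr\"onwall/comparison argument at the single state $x$ over an interval $[a,b]$, extracting a uniform $\epsilon>0$ on $A_j$ by compactness and feeding it through \ref{LR:subadditive} and \ref{LR:gamblebound} to obtain a strictly positive forcing term in a linear differential inequality. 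That buys you positivity for all $t>0$ directly, with no appeal to Corollary~\ref{corol:constantsignprob}, at the cost of reproving a quantitative form of Lemma~\ref{lemma:stayingabovetheminimum} and carrying the ODE comparison machinery; the paper's version is shorter mainly because Corollary~\ref{corol:constantsignprob} is needed elsewhere (e.g.\ in Proposition~\ref{prop:regulariffonestep}) and so comes for free.
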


By combining these equivalences with Definition~\ref{def:onestepabsorbing} and Corollary~\ref{corol:ergodiciffdiscrete1stepabsorbing}, we easily obtain the following result, which is the  characterisation of ergodicty that we have been after all along.

\begin{theorem}\label{theo:finalequivalence}
A lower transition rate operator $\lrate$ is ergodic if and only if
\vspace{1pt}
\begin{equation*}
\mathcal{X}_{\mathrm{1A}}\coloneqq
\{
x\in\states\colon
(\forall y\in\states)~y\dotabovearrow x
\}\neq\emptyset\vspace{-7pt}
\end{equation*}
and
\vspace{0pt}
\begin{equation*}
(\forall x\in\states\setminus\mathcal{X}_{\mathrm{1A}})~~x\dotbelowarrow \mathcal{X}_{\mathrm{1A}}.\vspace{6pt}
\end{equation*}
\end{theorem}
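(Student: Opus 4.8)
The plan is to obtain the statement as a direct translation of Corollary~\ref{corol:ergodiciffdiscrete1stepabsorbing}, using the two reachability equivalences to rewrite the defining conditions of $1$-step absorption in terms of $\dotabovearrow$ and $\dotbelowarrow$. Concretely, I would fix an arbitrary $t>0$ and invoke Corollary~\ref{corol:ergodiciffdiscrete1stepabsorbing}, which tells us that $\lrate$ is ergodic if and only if $\lt_t$ is $1$-step absorbing. It then remains to show that, for this $t$, the two conditions of Definition~\ref{def:onestepabsorbing} are exactly the two conditions appearing in the statement of the theorem.

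For the first condition, I would unpack $\min\ut_{\hspace{-1.5pt}t}\ind{x}>0$ as the requirement that $\ut_{\hspace{-1.5pt}t}\ind{x}(y)>0$ for every $y\in\states$. By Proposition~\ref{prop:iffuperreachable}, each such strict inequality is equivalent to $y\dotabovearrow x$, so the set $\{x\in\states\colon\min\ut_{\hspace{-1.5pt}t}\ind{x}>0\}$ appearing in Definition~\ref{def:onestepabsorbing} coincides with the set $\mathcal{X}_{\mathrm{1A}}=\{x\in\states\colon(\forall y\in\states)\,y\dotabovearrow x\}$ of the theorem. The key point worth flagging here is that, although Definition~\ref{def:onestepabsorbing} builds this set out of the $t$-dependent operator $\ut_{\hspace{-1.5pt}t}$, the reachability relation $\dotabovearrow$ does not involve $t$ at all; hence the set is in fact independent of the chosen time point, and the nonemptiness condition of $\lt_t$ reduces precisely to $\mathcal{X}_{\mathrm{1A}}\neq\emptyset$.

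For the second condition, having identified the top class $\mathcal{X}_{\mathrm{1A}}$, I would apply Proposition~\ref{prop:ifflowerreachable} with $A\coloneqq\mathcal{X}_{\mathrm{1A}}$: for any $x\in\states\setminus\mathcal{X}_{\mathrm{1A}}$, the inequality $\lt_t\ind{\mathcal{X}_{\mathrm{1A}}}(x)>0$ is equivalent to $x\dotbelowarrow\mathcal{X}_{\mathrm{1A}}$. This turns the absorption condition of $\lt_t$ into the second displayed condition of the theorem, again uniformly in $t$. Chaining these equivalences with Corollary~\ref{corol:ergodiciffdiscrete1stepabsorbing} yields the claim.

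I do not expect a genuine obstacle here: all the analytic content is already carried by the cited propositions, and the argument is essentially bookkeeping. The only point requiring a little care is the set identity for $\mathcal{X}_{\mathrm{1A}}$ together with the accompanying observation that both conditions are independent of the auxiliary time point $t$ — this is precisely what makes the final criterion expressible purely in terms of $\lrate$ via the two graph-theoretic reachability notions, and so it is worth stating explicitly rather than leaving it implicit.
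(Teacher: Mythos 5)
Your proposal is correct and follows essentially the same route as the paper's own proof: fix an arbitrary $t>0$, apply Corollary~\ref{corol:ergodiciffdiscrete1stepabsorbing}, and then translate the two conditions of Definition~\ref{def:onestepabsorbing} via Propositions~\ref{prop:iffuperreachable} and~\ref{prop:ifflowerreachable}. Your explicit remark that the resulting conditions are independent of the auxiliary time point $t$ is a helpful clarification, but the argument is otherwise identical to the one in the paper.
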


We consider this neccesary and sufficient condition for the ergodicity of $\lrate$ to be the main contribution of this paper. The reason why it is to be preferred over other necessary and sufficient conditions, such as those that are given in Corollaries~\ref{corol:ergodiciffdiscreteregularlyabsorbing} and~\ref{corol:ergodiciffdiscrete1stepabsorbing}, is because it does not require us to evaluate the operator $\lt_t$. Instead, all we have to do is solve a limited number of lower and upper reachability problems, which, as can be seen from Definitions~\ref{def:upperreachability} and~\ref{def:lowerreachability}, only requires us to evalutate the operator $\lrate$. This is obviously preferable, because $\lrate$ is directly available, whereas $\lt_t$ is known only indirectly through the differential Equation~\eqref{eq:differential} or the limit expression in Proposition~\ref{prop:approximation}. 


\section{Conclusions}\label{sec:conclusions}

The main contribution of this paper is a simple necessary and sufficient condition for the ergodicity of a lower transition rate operator $\lrate$. Specifically, as can be seen from Theorem~\ref{theo:finalequivalence}, it is necessary and sufficient for at least one state $x$ to be upper reachable from every other state $y$, and for the set $\mathcal{X}_{\mathrm{1A}}$ of all the states that satisfy this condition to be lower reachable from each of the states that does not.
The main conclusion that can be drawn from this result is that  ergodicity is easily satisfied. For example, it already suffices---but is by no means necessary---for every state to be upper reachable from any other state.

This result provides us with a simple method for checking wether $\lrate$ is ergodic, or equivalently, whether $\lt_tf$ is guaranteed to converge to a constant function as $t$ approaches infinity. The reason why this is important to us is because, as explained in the introduction, in the context of imprecise continuous-time Markov chains, this notion of ergodicity is equivalent to the existence of a unique limiting distribution that is independent of the initial state. Therefore, our results can be used to check whether or not such a unique limiting distribution exists.

Although the existence of such a limiting distribution is important, it is of course only one of the many aspects of the limit behaviour of imprecise continuous-time Markov chains. Many problems still remain unsolved. For example: what happens if we no longer care about the influence of the initial state? In particular: for a given initial state, under which conditions will an imprecise continuous-time Markov chain converge to a limiting distribution that is allowed to depend on this initial state? Or equivalently, using the terminolgy of this paper: which conditions does $\lrate$ need to satisfy in order for $\lim_{t\to+\infty}\lt_tf(x)$ to exist? Ergodicity is clearly a sufficient condition---since it additionaly requires that this limit does not depend of $x$---but it is definitely not necessary. In fact, we even conjecture that this type of convergence requires no conditions at all.

 The simple reason why we suspect this conjecture to hold is because, as mentioned in the introduction, if $\lrate$ is an intensity matrix $Q$, then rather remarkably, regardless of the specific intensity matrix $Q$ that is considered, $\smash{\lt_t=e^{Qt}}$ will always converge to a limit~\cite[Theorem~II.10.1]{Chung:1967}. By analogy, for any lower transition rate operator $\lrate$, we think that the corresponding lower transition operator $\lt_t$ will always converge to a limit. Investigating wether this conjecture is indeed true would be a nice topic for future research.

Another interesting line of future research would be to study ergodicity---or other types of convergence---from a quantitave rather than just qualitative point of view, by also taking into account the rate of convergence. For the discrete-time type of ergodicity that we discussed in Section~\ref{sec:lowertransergodic}, such a study has already been conducted in References~\cite{Hermans:2012ie,Skulj:2011db}, leading to the development of a coefficient of ergodicity that simultaneously captures both the qualitative aspect of convergence---``does it converge or not?''---and the quantitative aspect---``at which rate does it converge?''. We think that similar coefficients of ergodicity can also be developed for the continuous-time models that we have considered in this paper.

Finally, we would like to point out that these suggestions for future research are just the tip of the iceberg, because they focus solely on the limit behaviour of imprecise continuous-time Markov chains. Ultimately, we hope that our contributions will serve as a first step towards a further theoretic development of the general field of imprecise continuous-time Markov chains. 
The reason why we consider such developments to be important is because, given the succes of precise continuous-time Markov chains in various fields of application~\cite{Anderson:1991}, and the ever increasing demand for features such as reliability and robustness in these applications, we are convinced that imprecise continuous-time Markov chains have plenty of applied potential. Nevertheless, almost no applications have been developed so far. It seems to us that one of the main reasons for this lack of applications is a severe lack of available theoretical tools. We hope that a further theoretical development of the field of imprecise continuous-time Markov chains will allow this field to flourish, and will turn it into a full-fledged robust extension of the field of continous-time Markov chains.

\section*{Acknowledgements}
Jasper De Bock is a Postdoctoral Fellow of the Research Foundation - Flanders (FWO) and wishes to acknowledge its financial support. 
The author would also like to thank Gert de Cooman, Matthias C. M. Troffaes, Stavros Lopatatzidis and Thomas Krak for stimulating discussions on the topic of imprecise continuous-time Markov chains.

\bibliography{general}

\begin{thebibliography}{10}
\expandafter\ifx\csname url\endcsname\relax
  \def\url#1{\texttt{#1}}\fi
\expandafter\ifx\csname urlprefix\endcsname\relax\def\urlprefix{URL }\fi
\expandafter\ifx\csname href\endcsname\relax
  \def\href#1#2{#2} \def\path#1{#1}\fi

\bibitem{Chung:1967}
K.~L. Chung, Markov chains with stationary transition probabilities, Die
  Grundlehren der mathematischen Wissenschaften in Einzeldarstellungen,
  Springer, Berlin, New York, 1967.

\bibitem{Anderson:1991}
W.~J. Anderson, Continuous-Time Markov chains, an Applications-Oriented
  Approach, Springer Series in Statistics, Springer, New York, 1991.

\bibitem{Skulj:2015cq}
D.~{\v{S}}kulj, {Efficient computation of the bounds of continuous time
  imprecise Markov chains}, Applied mathematics and computation 250~(C) (2015)
  165--180.

\bibitem{Krak:2016}
T.~Krak, J.~De~Bock, {Imprecise Continuous-Time Markov Chains}Work in progress.

\bibitem{Troffaes:CTMCapp}
M.~C.~M. Troffaes, J.~Gledhill, D.~{\v S}kulj, S.~Blake, Using imprecise
  continuous time markov chains for assessing the reliability of power networks
  with common cause failure and non-immediate repair, in: ISIPTA '15:
  Proceedings of the Ninth International Symposium on Imprecise Probability:
  Theories and Applications, 2015, pp. 287--294.

\bibitem{2012arXiv1204.0467S}
D.~{{\v S}kulj}, {Interval matrix differential equations} (2012).
\newblock \href {http://arxiv.org/abs/1204.0467} {\path{arXiv:1204.0467}}.

\bibitem{Xianping:2009}
X.~Guo, O.~Hern\'andez-Lerma, {Continuous-time Markov decision processes:
  theory and applications.}, Stochastic Modelling and Applied Probability 62,
  Springer, Berlin, 2009.

\bibitem{guo2003}
X.~Guo, O.~Hernández-Lerma, Continuous-time controlled markov chains, Annals
  of Applied Probability 13~(1) (2003) 363--388.

\bibitem{Galdino:2013}
S.~Galdino, Interval continuous-time markov chains simulation, in: Proceedings
  of the 1013 International Conference on Fuzzy Theory and Its Applications,
  2013, pp. 273--278.

\bibitem{Troffaes:2014tl}
M.~C.~M. Troffaes, G.~de~Cooman, {Lower previsions}, John Wiley {\&} Sons,
  2014.

\bibitem{Walley:1991vk}
P.~Walley, {Statistical reasoning with imprecise probabilities}, Chapman and
  Hall, London, 1991.

\bibitem{Hermans:2012ie}
F.~Hermans, G.~de~Cooman, {Characterisation of ergodic upper transition
  operators}, International Journal of Approximate Reasoning 53~(4) (2012)
  573--583.

\bibitem{seneta06}
E.~Seneta, Non-negative matrices and Markov chains, Springer, New York, 2006.

\bibitem{tornambè1995discrete}
A.~Tornamb{\`e}, Discrete-event System Theory: An Introduction, World
  Scientific, 1995.

\bibitem{Skulj:2011db}
D.~{\v{S}}kulj, R.~Hable, {Coefficients of ergodicity for Markov chains with
  uncertain parameters}, Metrika 76~(1) (2011) 107--133.

\bibitem{Royden:2010vn}
H.~L. Royden, P.~M. Fitzpatrick, {Real Analysis}, 4th Edition, Prentice Hall,
  2010.

\end{thebibliography}

\appendix

\section{Proofs}\label{app:proofs}

\subsection{Proofs of results in Section~\ref{sec:prelim}}\label{app:prelimproofs}

Let $A$ and $B$ be two non-negatively homogeneous operators from $\gamblesX$ to $\gamblesX$ and consider any $f,g\in\gamblesX$ and $\lambda\in\reals$.

It is well known that the maximum norm on $\gamblesX$ satisfies the defining properties of a norm: it is absolutely homogeneous ($\norm{\lambda f}=\abs{\lambda}\norm{f}$), it is subadditive ($\norm{f+g}\leq\norm{f}+\norm{g}$) and it separates points ($\norm{f}=0\then f=0$). The induced operator norm also satisfies these properties. Firstly, it is absolutely homogeneous because the maximum norm is: 
\begin{align*}
\norm{\lambda A}&=\sup\{\norm{\lambda Af}\colon f\in\gamblesX,\norm{f}=1\}\\
&=\sup\{\abs{\lambda}\norm{Af}\colon f\in\gamblesX,\norm{f}=1\}\\
&=\abs{\lambda}\sup\{\norm{Af}\colon f\in\gamblesX,\norm{f}=1\}
=\abs{\lambda}\norm{A}.
\end{align*}
Secondly, it is subadditive because the maximum norm is: 
\begin{align*}
\norm{A+B}&=\sup\{\norm{(A+B)f}\colon f\in\gamblesX,\norm{f}=1\}\\
&=\sup\{\norm{Af+Bf}\colon f\in\gamblesX,\norm{f}=1\}\\
&\leq\sup\{\norm{Af}+\norm{Bf}\colon f\in\gamblesX,\norm{f}=1\}\\
&\leq\sup\{\norm{A}+\norm{B}\colon f\in\gamblesX,\norm{f}=1\}
=\norm{A}+\norm{B}.
\end{align*}
Thirdly, it separates points because the maximum norm does: if $\norm{A}=0$, then $A=0$ because, for all $f\in\gamblesX$, it follows from \ref{N:gambleproduct}---which we will prove next---that
\begin{equation*}
0\leq\norm{Af}\leq\norm{A}\norm{f}=0
\end{equation*}
and therefore, since the maximum norm separates points, that $Af=0$.

In order to prove \ref{N:gambleproduct}, we consider two cases: $f=0$ and $f\neq0$. If $f\neq0$, or equivalently, if $\norm{f}\neq0$, we let $g\coloneqq\nicefrac{f}{\norm{f}}$. If $f=0$, or equivalently, if $\norm{f}=0$, we let $g\coloneqq 1$. In both cases, this guarantees that $f=\norm{f}g$ and $\norm{g}=1$ and therefore, we find that
\begin{equation*}
\norm{Af}=\norm{A(\norm{f}g)}
=\norm{\norm{f}Ag}
=\norm{f}\norm{Ag}\leq\norm{f}\norm{A},
\end{equation*}
where the inequality holds because $\norm{f}\geq0$ and $\norm{Ag}\leq\norm{A}$.

Finally, \ref{N:operatorproduct}  follows rather easily from \ref{N:gambleproduct}:
\begin{align*}
\norm{AB}&=\sup\{\norm{ABf}\colon f\in\gamblesX,\norm{f}=1\}\\
&\leq\sup\{\norm{A}\norm{Bf}\colon f\in\gamblesX,\norm{f}=1\}\\
&=\norm{A}\sup\{\norm{Bf}\colon f\in\gamblesX,\norm{f}=1\}=\norm{A}\norm{B}.
\end{align*}

\subsection{Proofs of results in Section~\ref{sec:lowertrans}}\label{app:lowertransproofs}

\begin{proof}[Proof of \ref{L:lowerthanone}, \ref{L:gambleinequality} and \ref{L:operatorinequality}]
\ref{L:lowerthanone} follows from Equation~\eqref{eq:operatornorm} because we know from \ref{L:bounds} that $\norm{\lt f}\leq\norm{f}$ for all $f\in\gamblesX$.
\ref{L:gambleinequality} follows from \ref{L:absinequality} and \ref{L:bounds} (in that order). \ref{L:operatorinequality} follows from Equation~\eqref{eq:operatornorm} and \ref{L:gambleinequality}.
\end{proof}

\begin{proof}[Proof of Proposition~\ref{prop:Luniform}]
The direct implication follows trivially from \ref{N:gambleproduct}. For the converse implication, we provide a proof by contradiction. 
Assume that  $\lt_nf\to\lt f$ for all $f\in\gamblesX$. Assume \emph{ex absurdo} that $\lt_n\not\to\lt$. Then since $\lt_n\not\to\lt$, it follows that $\limsup_{n\to\infty}\norm{\lt_n-\lt}>0$, which implies that there is some $\epsilon>0$ and an increasing sequence $n_k$, $k\in\nats$, of natural numbers such that $\norm{\lt_{n_k}-\lt}>\epsilon$ for all $k\in\nats$. Furthermore, for all $k\in\nats$, it follows from $\norm{\lt_{n_k}-\lt}>\epsilon$ and Equation~\eqref{eq:operatornorm} that there is some $f_k\in\gamblesX$ such that $\norm{f_k}=1$ and $\norm{\lt_{n_k}f_k-\lt f_k}>\epsilon$. 
Since the sequence $f_k$, $k\in\nats$, is clearly bounded---because $\norm{f_k}=1$---it follows from the Bolzano-Weierstrass theorem that it has a convergent subsequence, which implies that there is some $f\in\gamblesX$ and an increasing sequence $k_i$, $i\in\nats$, of natural numbers such that $\lim_{i\to\infty}\norm{f_{k_i}-f}=0$. Furthermore, since we have assumed that $\lt_nf\to\lt f$, it follows that
\begin{equation*}
\lim_{i\to\infty}\norm{\lt_{n_{k_i}}f-\lt f}=\lim_{n\to\infty}\norm{\lt_{n}f-\lt f}=0.
\end{equation*}
Hence, since it follows from \ref{L:gambleinequality} that
\begin{align*}
\norm{\lt_{n_{k_i}}f_{k_i}-\lt f_{k_i}}
&=
\norm{\big(\lt_{n_{k_i}}f_{k_i}-\lt_{n_{k_i}}f\big)
+
\big(\lt_{n_{k_i}}f-\lt f\big)
+
\big(\lt f-\lt f_{k_i}\big)
}\\
&\leq
\norm{\lt_{n_{k_i}}f_{k_i}-\lt_{n_{k_i}}f}
+
\norm{\lt_{n_{k_i}}f-\lt f}
+
\norm{\lt f-\lt f_{k_i}}
\\
&\leq
\norm{f_{k_i}-f}
+
\norm{\lt_{n_{k_i}}f-\lt f}
+
\norm{f_{k_i}-f},
\end{align*}
we find that
\begin{equation*}
\lim_{i\to\infty}\norm{\lt_{n_{k_i}}f_{k_i}-\lt f_{k_i}}=0.
\end{equation*}
Since $\norm{\lt_{n_{k_i}}f_{k_i}-\lt f_{k_i}}>\epsilon>0$ for all $i\in\nats$, this is a contradiction.
\end{proof}

\subsection{Proofs of results in Section~\ref{sec:lowertransergodic}}\label{app:lowertransergodicproofs}

\begin{lemma}\label{lemma:simplifyfilip}
A lower transition operator $\lt$ is regularly absorbing if and only if
\begin{equation*}
\mathcal{X}'_{\mathrm{RA}}\coloneqq
\{
x\in\states\colon
(\exists n\in\nats)(\forall k\geq n)~
\min\ut^k\ind{x}>0
\}\neq\emptyset
\end{equation*}
and
\begin{equation*}
(\forall x\in\states\setminus\mathcal{X}'_{\mathrm{RA}})
(\exists n\in\nats)~
\ut^n\ind{\mathcal{X}\setminus\mathcal{X}'_{\mathrm{RA}}}(x)<1.
\end{equation*}
Furthermore, the set $\mathcal{X}'_{\mathrm{RA}}$ is equal to the set $\mathcal{X}_{\mathrm{RA}}$ that was used in Definition~\ref{def:regularlyabsorbing}.
\end{lemma}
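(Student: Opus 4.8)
The plan is to prove the equivalence by first establishing the auxiliary set equality $\mathcal{X}'_{\mathrm{RA}}=\mathcal{X}_{\mathrm{RA}}$ (the ``Furthermore'' claim), and then matching the two displayed conditions term by term against top class regularity and top class absorption from Definition~\ref{def:regularlyabsorbing}. The reformulation is essentially a conjugacy rewriting, once the two sets are known to coincide, so the set equality is the conceptual heart of the argument.

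First I would establish $\mathcal{X}'_{\mathrm{RA}}=\mathcal{X}_{\mathrm{RA}}$. The inclusion $\mathcal{X}'_{\mathrm{RA}}\subseteq\mathcal{X}_{\mathrm{RA}}$ is immediate, since a property holding for all $k\geq n$ in particular holds for some $n$. For the reverse inclusion, the key observation is that $k\mapsto\min\ut^k\ind{x}$ cannot drop below a positive value it has already attained. Indeed, $\ut$ fixes constants—for a constant $c$ we have $\min c=\max c=c$, so \ref{L:bounds} forces $\ut c=c$—and $\ut$ is monotone by \ref{L:monotone}. Hence if $c\coloneqq\min\ut^n\ind{x}>0$, then $\ut^n\ind{x}\geq c$ pointwise, so $\ut^{n+1}\ind{x}=\ut(\ut^n\ind{x})\geq\ut(c)=c$, giving $\min\ut^{n+1}\ind{x}\geq c>0$. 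Iterating yields $\min\ut^k\ind{x}>0$ for all $k\geq n$, so $x\in\mathcal{X}'_{\mathrm{RA}}$, and equality follows.

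Next, writing $\mathcal{R}\coloneqq\mathcal{X}_{\mathrm{RA}}=\mathcal{X}'_{\mathrm{RA}}$ for brevity, the first condition $\mathcal{X}'_{\mathrm{RA}}\neq\emptyset$ is literally top class regularity. For the second, I would prove the conjugacy identity $\lt^n\ind{\mathcal{R}}=1-\ut^n\ind{\states\setminus\mathcal{R}}$ for every $n$. Its one-step form is $\lt(1-g)=\lt(-g)+1=1-\ut(g)$ for any $g\in\gamblesX$, using \ref{L:constantadditivity} together with Equation~\eqref{eq:Tconjugacy}; since $\ind{\mathcal{R}}=1-\ind{\states\setminus\mathcal{R}}$, applying this repeatedly with $g$ the current iterate and inducting on $n$ gives the identity. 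Evaluating at a point $x$ then shows $\lt^n\ind{\mathcal{R}}(x)>0\iff\ut^n\ind{\states\setminus\mathcal{R}}(x)<1$ for each fixed $x$ and $n$. Consequently the $\exists n$ statement over $x\in\states\setminus\mathcal{R}$—top class absorption—is equivalent to the second displayed condition, and combining the two matched conditions with the set equality completes the proof.

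I expect the set equality of Step one to be the main obstacle, or at least the only place requiring a genuine idea: the monotonicity of $\min\ut^k\ind{x}$ is not part of the definitions and must be extracted from the facts that $\ut$ fixes constants and preserves order, so that positivity, once achieved, persists. Everything after that is routine conjugacy bookkeeping built on the single identity $\lt(1-g)=1-\ut(g)$.
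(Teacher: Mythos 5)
Your proposal is correct and follows essentially the same route as the paper: first show $\mathcal{X}'_{\mathrm{RA}}=\mathcal{X}_{\mathrm{RA}}$ by arguing that positivity of $\min\ut^k\ind{x}$ persists once attained, then translate top class absorption via the conjugacy identity $\lt^n\ind{\mathcal{X}_{\mathrm{RA}}}=1-\ut^n\ind{\states\setminus\mathcal{X}_{\mathrm{RA}}}$ obtained from \ref{L:constantadditivity} and Equation~\eqref{eq:Tconjugacy}. The only cosmetic difference is that the paper derives the persistence step directly from $\ut g\geq\min g$ (\ref{L:bounds}) rather than from monotonicity plus the fact that $\ut$ fixes constants; both are valid one-line arguments.
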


\begin{proof}[Proof of Lemma~\ref{lemma:simplifyfilip}]
Consider any $x\in\mathcal{X}_{\mathrm{RA}}$. Definition~\ref{def:regularlyabsorbing} then implies that there is some $n\in\nats$ such that $\min\ut^n\ind{x}>0$, and therefore, because of \ref{L:bounds}, we know that $\ut^{n+1}\ind{x}=\ut(\ut^n\ind{x})\geq\min\ut^n\ind{x}>0$, which implies that $\min\ut^{n+1}\ind{x}>0$. In the same way, we also find that $\min\ut^{n+2}\ind{x}>0$ and, by continuing in this way, that $\min\ut^k\ind{x}>0$ for all $k\geq n$. Since this holds for all $x\in\mathcal{X}_{\mathrm{RA}}$, it follows that $\mathcal{X}_{\mathrm{RA}}\subseteq\mathcal{X}'_{\mathrm{RA}}$. Since $\mathcal{X}'_{\mathrm{RA}}$ is clearly a subset of $\mathcal{X}_{\mathrm{RA}}$, this implies that $\mathcal{X}'_{\mathrm{RA}}=\mathcal{X}_{\mathrm{RA}}$. Hence, trivially, $\mathcal{X}_{\mathrm{RA}}\neq\emptyset$ if and only if $\mathcal{X}'_{\mathrm{RA}}\neq\emptyset$. The result now follows because it holds for all $x\in\states\setminus\mathcal{X}'_{\mathrm{RA}}=\states\setminus\mathcal{X}_{\mathrm{RA}}$ and all $n\in\nats$ that
\begin{equation*}
\ut^n\ind{\states\setminus\mathcal{X}'_{\mathrm{RA}}}(x)
=\ut^n(1-\ind{\mathcal{X}'_{\mathrm{RA}}})(x)
=1-\lt^n(\ind{\mathcal{X}'_{\mathrm{RA}}})(x)
=1-\lt^n(\ind{\mathcal{X}_{\mathrm{RA}}})(x),
\end{equation*}
where the second equality follows from \ref{L:constantadditivity} and Equation~\eqref{eq:Tconjugacy}.
\end{proof}

\begin{proof}[Proof of Proposition~\ref{prop:filip}]
Since we know from Lemma~\ref{lemma:simplifyfilip} that our definition of a regularly absorbing lower transition operator is equivalent to the definition in Reference~\cite{Hermans:2012ie}, this result is identical to \cite[Proposition~3]{Hermans:2012ie}.
\end{proof}

\subsection{Proofs of results in Section~\ref{sec:lowertransrate}}\label{app:lowertransrateproofs}

\begin{proof}[Proof of \ref{LR:lowerbelowupper}, \ref{LR:removeconstant}, \ref{LR:diagonalnegative}, \ref{LR:gamblebound} and \ref{LR:operatorbound}]
\ref{LR:lowerbelowupper} holds because it follows from Equation~\eqref{eq:Qconjugacy}, \ref{LR:subadditive} and \ref{LR:constant} that
\begin{equation*}
\lrate(f)-\urate(f)=\lrate(f)+\lrate(-f)\leq\lrate(f-f)=\lrate(0)=0.
\end{equation*}
\ref{LR:removeconstant} holds because it follows from \ref{LR:constant} and \ref{LR:subadditive} that
\begin{equation*}
\lrate(f)
=\lrate(f)+\lrate(\mu)
\leq\lrate(f+\mu)
=\lrate(f+\mu)+\lrate(-\mu)
\leq\lrate(f).
\end{equation*}
\ref{LR:diagonalnegative} holds because it follows from Equation~\eqref{eq:Qconjugacy}, \ref{LR:removeconstant}, \ref{LR:subadditive} and \ref{LR:nondiagpositive}---in that order---that
\begin{align*}
\urate(\ind{x})(x)
=-\lrate(-\ind{x})(x)
&=-\lrate(1-\ind{x})(x)\\
&=-\lrate(\textstyle\sum_{y\in\states\setminus\{x\}}\ind{y})(x)
\leq-\textstyle\sum_{y\in\states\setminus\{x\}}\lrate(\ind{y})(x)
\leq0.
\end{align*}
\ref{LR:gamblebound} holds because it follows from \ref{LR:removeconstant}, \ref{LR:subadditive}, \ref{LR:homo}, \ref{LR:nondiagpositive}, \ref{LR:diagonalnegative} and \ref{LR:lowerbelowupper}---in that order---that
\begin{align*}
\lrate(f)(x)
=\lrate(f-\min f)(x)
&\geq\textstyle\sum_{y\in\states}\lrate\big((f(y)-\min f)\ind{y}\big)(x)\\
&=\textstyle\sum_{y\in\states}(f(y)-\min f)\lrate(\ind{y})(x)\\
&\geq(f(x)-\min f)\lrate(\ind{x})(x)\\
&\geq(\max f-\min f)\lrate(\ind{x})(x)
\geq2\norm{f}\lrate(\ind{x})(x).
\end{align*}
We end by proving \ref{LR:operatorbound}. Consider any $g\in\gamblesX$ such that $\norm{g}=1$. It then follows from \ref{LR:gamblebound} and \ref{LR:diagonalnegative} that
\begin{equation*}
\lrate(g)\geq2\norm{g}\min_{x\in\states}\lrate(\ind{x})(x)
\geq-2\max_{x\in\states}\abs{\lrate(\ind{x})(x)}.
\end{equation*}
Similarly, since $\norm{-g}=\norm{g}=1$, we also find that $\lrate(-g)\geq-2\max_{x\in\states}\abs{\lrate(\ind{x})(x)}.$ By combining these two inequalities with \ref{LR:lowerbelowupper} and Equation~\eqref{eq:Qconjugacy}, it follows that
\begin{equation*}
-2\max_{x\in\states}\abs{\lrate(\ind{x})(x)}\leq\lrate(g)\leq\urate(g)=-\lrate(-g)\leq2\max_{x\in\states}\abs{\lrate(\ind{x})(x)},
\end{equation*}
which implies that $\norm{\lrate(g)}\leq2\max_{x\in\states}\abs{\lrate(\ind{x})(x)}$. Since this is true for all $g\in\gamblesX$ such that $\norm{g}=1$, \ref{LR:operatorbound} now follows from Equation~\eqref{eq:operatornorm}.
\end{proof}

\begin{proof}[Proof of Proposition~\ref{prop:fromQtoL}]
\ref{L:subadditivity} and~\ref{L:homo} follow trivially from \ref{LR:subadditive} and~\ref{LR:homo}. We only prove \ref{L:bound}. Consider any $f\in\gamblesX$. Then
\begin{align*}
(I+\Delta\lrate)f
&=f+\Delta\lrate f\\
&=f+\Delta\sum_{x\in\states}\ind{x}\lrate(f)(x)\\
&\geq f+\Delta\sum_{x\in\states}(f(x)-\min f)\ind{x}\lrate(\ind{x})(x)\\
&\geq f-\Delta\sum_{x\in\states}(f(x)-\min f)\ind{x}\norm{\lrate(\ind{x})}\\
&\geq f-\Delta\sum_{x\in\states}(f(x)-\min f)\ind{x}\norm{\lrate}\\
&=f-\Delta\norm{\lrate}(f-\min f)
=(f-\min f)(1-\Delta\norm{\lrate})+\min f\geq\min f,
\end{align*}
where the first inequality follows from \ref{LR:gamblebound} and the third inequality follows from Equation~\eqref{eq:operatornorm} and the fact that $\norm{\ind{x}}=1$.
\end{proof}

\begin{proof}[Proof of Proposition~\ref{prop:fromLtoQ}]
Simply check each of the defining properties:
\ref{LR:constant} holds because \ref{L:bounds} implies that $\lt(\mu)=\mu$ for all $\mu\in\reals$, \ref{LR:subadditive} follows from \ref{L:subadditivity}, \ref{LR:homo} follows from \ref{L:homo} and \ref{LR:nondiagpositive} follows from \ref{L:bound}. 
\end{proof}

\begin{proof}[Proof of \ref{LR:continuous}, \ref{LR:gambleinequality} and \ref{LR:operatorinequality}]
\ref{LR:continuous}, \ref{LR:gambleinequality} and \ref{LR:operatorinequality} are trivial if $\lrate=0$. Therefore, we may assume that $\lrate\neq0$, which implies that $\norm{\lrate}>0$. Now let $\lt\coloneqq I+\nicefrac{1}{\norm{\lrate}}\lrate$. It then follows from Proposition~\ref{prop:fromQtoL} that $\lt$ is a lower transition operator.
We first prove~\ref{LR:continuous}. If $f_n\to f$, then $\lt f_n\to\lt f$ because of~\ref{L:continuous}. Since $\lrate=\norm{\lrate}(\lt-I)$, this implies that $\lrate f_n\to\lrate$. \ref{LR:gambleinequality} holds because 
\begin{align*}
\norm{\lrate f-\lrate g}
&=\norm{\norm{\lrate}(\lt f-f)-\norm{\lrate}(\lt g-g)}\\
&\leq\norm{\lrate}\norm{\lt f-\lt g}+\norm{\lrate}\norm{f-g}
\leq2\norm{\lrate}\norm{f-g},
\end{align*}
where the last inequality follows from~\ref{L:gambleinequality}. Similarly,
\ref{LR:operatorinequality} holds because
\begin{align*}
\norm{\lrate A-\lrate B}
&=\norm{\norm{\lrate}(\lt A-A)-\norm{\lrate}(\lt B-B)}\\
&\leq\norm{\lrate}\norm{\lt A-\lt B}+\norm{\lrate}\norm{A-B}
\leq2\norm{\lrate}\norm{A-B},
\end{align*}
where the last inequality follows from~\ref{L:operatorinequality}.
\end{proof}

\begin{proof}[Proof of Proposition~\ref{prop:Quniform}]
The direct implication follows trivially from~\ref{N:gambleproduct}. We only prove the converse implication. Assume that $\lrate_nf\to\lrate f$ for all $f\in\gamblesX$. 
For all $x\in\states$, this implies that $\lrate_n(\ind{x})(x)\to\lrate(\ind{x})(x)$, which in turn implies that there is some $c_x>0$ such that $\vert\lrate(\ind{x})(x)\vert<c_x$ and $\vert\lrate_n(\ind{x})(x)\vert<c_x$ for all $n\in\nats$. Let $c\coloneqq\max_{x\in\states}c_x$. It then follows from~\ref{LR:operatorbound} that $\vert\vert\lrate\vert\vert\leq2c$ and $\vert\vert\lrate_n\vert\vert\leq2c$ for all $n\in\nats$.
Choose any $0<\Delta\leq\nicefrac{1}{2c}$. It then follows from Proposition~\ref{prop:fromQtoL} that $\lt\coloneqq I+\Delta\lrate$ and $\lt_n\coloneqq I+\Delta\lrate_n$, $n\in\nats$, are lower transition operators. Furthermore, since $\lrate_nf\to\lrate f$ for all $f\in\gamblesX$, it follows that $\lt_nf\to\lt f$ for all $f\in\gamblesX$. By applying Proposition~\ref{prop:Luniform}, we now find that $\lt_n\to\lt$, which implies that $\lrate_n\to\lrate$ because
\begin{equation*}
\norm{\lrate_n-\lrate}
=\frac{1}{\Delta}\norm{\Delta\lrate_n-\Delta\lrate}
=\frac{1}{\Delta}\norm{(I+\Delta\lrate_n)-(I+\Delta\lrate)}
=\frac{1}{\Delta}\norm{\lt_n-\lt}.
\end{equation*}
\end{proof}

\subsection{Proofs of results in Section~\ref{sec:differential}}\label{app:differentialproofs}

\begin{lemma}\label{lemma:continouslydifferentiable}
Let $\lrate$ be a lower transition rate operator. Then for all $f\in\gamblesX$, $\lt_sf$ is continously differentiable on $[0,\infty)$.
\end{lemma}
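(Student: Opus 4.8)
The plan is to read off the derivative from the differential equation itself and then argue that this derivative is continuous, using the Lipschitz continuity of $\lrate$. By the result of {\v{S}}kulj that justifies the definition of $\lt_t$, we already know that for every $f\in\gamblesX$ the map $s\mapsto\lt_sf$ is a differentiable function from $[0,\infty)$ into $\gamblesX$ (with a right derivative at $s=0$, since the differential equation is posited for all $t\geq0$) and that $\frac{d}{ds}\lt_sf=\lrate\lt_sf$. Hence, to establish \emph{continuous} differentiability, it suffices to show that the map $s\mapsto\lrate\lt_sf$ is continuous on $[0,\infty)$.

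First I would note that differentiability of $s\mapsto\lt_sf$ immediately gives its continuity: if the limit defining the derivative exists in $\gamblesX$ at a point $s$, then in particular $\lt_{s+h}f\to\lt_sf$ as $h\to0$. Thus $s\mapsto\lt_sf$ is a continuous map from $[0,\infty)$ into $\gamblesX$. Next I would compose this with $\lrate$ and invoke the Lipschitz bound~\ref{LR:gambleinequality}. For any $s\geq0$ and any $h$ with $s+h\geq0$,
\begin{equation*}
\norm{\lrate\lt_{s+h}f-\lrate\lt_sf}\leq 2\norm{\lrate}\,\norm{\lt_{s+h}f-\lt_sf},
\end{equation*}
and the right-hand side tends to $0$ as $h\to0$ by the continuity just established (here $\norm{\lrate}$ is finite by~\ref{LR:operatorbound}). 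Therefore $s\mapsto\lrate\lt_sf$ is continuous, and since this map is exactly the derivative $\frac{d}{ds}\lt_sf$, the derivative is continuous and $\lt_sf$ is continuously differentiable on $[0,\infty)$.

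I do not expect a genuine obstacle here: the whole argument is a chaining of three facts---(i) that the solution of the differential equation is differentiable, (ii) that differentiability entails continuity of $s\mapsto\lt_sf$, and (iii) the Lipschitz estimate~\ref{LR:gambleinequality} on $\lrate$. The only point requiring a little care is the behaviour at the endpoint $s=0$, where ``differentiable'' and ``continuous'' must be read in the one-sided sense; but because the differential equation is stated for all $t\geq0$, the right derivative at $0$ exists and the same composition argument applies verbatim there.
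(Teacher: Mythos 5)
Your proposal is correct and follows essentially the same route as the paper: deduce continuity of $s\mapsto\lt_sf$ from its differentiability (which comes from Equation~\eqref{eq:differential}), compose with $\lrate$ to get continuity of the derivative $\lrate\lt_sf$, and conclude. The only cosmetic difference is that you invoke the quantitative Lipschitz bound~\ref{LR:gambleinequality} where the paper simply cites the continuity property~\ref{LR:continuous}; these are interchangeable here.
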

\begin{proof}
It follows from Equation~\eqref{eq:differential} that $\lt_sf$ is continuous on $[0,\infty)$. Therefore, since $\lrate$ is a continuous operator [\ref{LR:continuous}], $\lrate\lt_sf$ is also continuous on $[0,\infty)$. Because of Equation~\eqref{eq:differential}, this implies that $\lt_sf$ is continuously differentiable on $[0,\infty)$.
\end{proof}

\begin{lemma}\label{lemma:minimumincreases}
Let $\lrate$ be a lower transition rate operator and let $\Gamma(s)$ be a continously differentiable map from $[0,t]$ to $\gamblesX$ for which $\smash{\frac{d}{ds}}\Gamma(s)\geq\lrate\Gamma(s)$ for all $s\in[0,t]$. Then $\min\Gamma(t)\geq\min\Gamma(0)$.
\end{lemma}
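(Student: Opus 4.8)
The plan is to track the running minimum $m(s)\coloneqq\min\Gamma(s)$ and show that it is non-decreasing on $[0,t]$; the conclusion $\min\Gamma(t)\geq\min\Gamma(0)$ is then immediate. The entire argument is driven by a single pointwise observation that isolates the role of $\lrate$. Fix $s$ and let $x$ be any state at which $\Gamma(s)$ attains its minimum, so that $\Gamma(s)(x)-\min\Gamma(s)=0$. Applying property~\ref{LR:gamblebound} to $f=\Gamma(s)$ then yields
\begin{equation*}
\lrate(\Gamma(s))(x)\geq(\Gamma(s)(x)-\min\Gamma(s))\,\lrate(\ind{x})(x)=0,
\end{equation*}
and combining this with the hypothesis $\frac{d}{ds}\Gamma(s)\geq\lrate\Gamma(s)$, read coordinatewise at $x$, gives $\frac{d}{ds}\Gamma(s)(x)\geq0$. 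In words: at every state that is currently minimal, the corresponding coordinate of $\Gamma$ is non-decreasing. This is the only place where the structure of a lower transition rate operator enters.

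Next I would transfer this coordinatewise statement to $m$ itself. Since $\states$ is finite and $\Gamma$ is continuously differentiable, $m$ is the minimum of the finitely many $C^1$ functions $\Gamma(\cdot)(x)$, hence Lipschitz, and it admits a right derivative at every $s$. A standard envelope argument shows that this right derivative equals $\min\{\frac{d}{ds}\Gamma(s)(x)\colon x\in A(s)\}$, where $A(s)\coloneqq\{x\in\states\colon\Gamma(s)(x)=m(s)\}$ is the \emph{active set}: for $x\notin A(s)$ the strict gap $\Gamma(s)(x)>m(s)$ together with continuity keeps $x$ out of the minimiser for all sufficiently small $h>0$, so only the finitely many states in $A(s)$ contribute, and a first-order expansion of each gives the formula. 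By the first step, each such coordinate derivative is non-negative, so the right derivative of $m$ is $\geq0$ for all $s\in[0,t)$.

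It then remains to invoke the elementary fact that a continuous function on $[0,t]$ whose right derivative is everywhere non-negative is non-decreasing. I would prove this by the usual perturbation: for $\epsilon>0$ the function $g_\epsilon(s)\coloneqq m(s)+\epsilon s$ has right derivative $\geq\epsilon>0$. If $g_\epsilon$ were not non-decreasing, there would be $s_1<s_2$ with $g_\epsilon(s_1)>g_\epsilon(s_2)$; taking an intermediate value $y$ and letting $s^*$ be the last point of $[s_1,s_2]$ where $g_\epsilon(s^*)=y$, all right difference quotients at $s^*$ would be negative, contradicting the right derivative being positive. Hence $g_\epsilon$ is non-decreasing, and letting $\epsilon\to0$ gives the claim for $m$, so in particular $m(t)\geq m(0)$.

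The only genuine obstacle is the non-smoothness of $m$ at times where the minimising state switches: the ordinary mean value theorem does not apply, which is precisely why I pass to one-sided derivatives, use the active-set/envelope description in the second step, and finish with the Dini-type monotonicity lemma in the third. All the structural input from the theory of lower transition rate operators is concentrated in the first step via~\ref{LR:gamblebound}; the remaining two steps are purely real-analytic, and that is where the care must be spent.
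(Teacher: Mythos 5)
Your proof is correct, and its decisive step is exactly the one the paper uses: at any state $x$ where $\Gamma(s)$ attains its minimum, property~\ref{LR:gamblebound} gives $\lrate(\Gamma(s))(x)\geq(\Gamma(s)(x)-\min\Gamma(s))\lrate(\ind{x})(x)=0$, whence $\frac{d}{ds}\Gamma(s)(x)\geq0$. Where you genuinely diverge is in how you transfer this coordinatewise information to $\min\Gamma(s)$. The paper notes that a minimum of finitely many absolutely continuous functions is absolutely continuous, so $\min\Gamma(s)$ is differentiable almost everywhere with a Lebesgue-integrable derivative satisfying $\min\Gamma(t)=\min\Gamma(0)+\int_0^t\frac{d}{ds}\min\Gamma(s)\,ds$; at each point of differentiability it shows the derivative of $\min\Gamma$ coincides with that of $\Gamma(\cdot)(x)$ for an active $x$ (by a small contradiction argument exploiting that $\Gamma(s)(x)-\min\Gamma(s)$ is nonnegative and vanishes at an interior point), and concludes by nonnegativity of the integrand. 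You instead work with one-sided derivatives everywhere: the envelope formula identifying the right derivative of the minimum with $\min\{\frac{d}{ds}\Gamma(s)(x)\colon x\in A(s)\}$ over the active set, followed by the Dini-type lemma that a continuous function with everywhere nonnegative right derivative is non-decreasing. Both routes are sound; yours is more elementary in that it avoids measure theory (absolute continuity and the Lebesgue fundamental theorem of calculus) at the cost of proving the envelope formula and the monotonicity lemma by hand, and it yields the slightly stronger conclusion that $\min\Gamma(s)$ is non-decreasing on all of $[0,t]$, which the paper's integral argument also delivers implicitly but does not state.
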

\begin{proof}
Since $\Gamma(s)$ is continously differentiable on $[0,t]$, it follows that for every $x\in\states$, $\Gamma(s)(x)$ is also continuously differentiable on $[0,t]$, which implies that it is absolutely continuous on $[0,t]$. Hence, since a minimum of a finite number of absolutely continuous functions is again absolutely continuous, we find that $\min\Gamma(s)$ is absolutely continuous on $[0,t]$, which implies---see Reference~\cite[Theorem 10, Section 6.5]{Royden:2010vn}---that $\min\Gamma(s)$ has a derivative $\frac{d}{ds}\min\Gamma(s)$  almost everwhere on $(0,t)$, that this derivative is Lebesgue integrable over $[0,t]$, and that
\begin{equation}\label{eq:lebesgue}
\min\Gamma(t)=\min\Gamma(0)+\int_0^t\Big(\frac{d}{ds}\min\Gamma(s)\Big)ds.
\end{equation}
Consider now any $t^*\in(0,t)$ for which $\min\Gamma(s)$ has a derivative and consider any $x\in\states$ for which $\Gamma(t^*)(x)=\min\Gamma(t^*)$ [clearly, there is at least one such $x$]. Since $\Gamma(s)(x)$ is differentiable, $\smash{\frac{d}{ds}}\Gamma(s)(x)$ exists in $t^*$. Assume \emph{ex absurdo} that $\smash{\frac{d}{ds}\Gamma(s)(x)\big\vert_{s=t^*}}$ is not equal to $\frac{d}{ds}\min\Gamma(s)\big\vert_{s=t^*}$ or, equivalently, that $\frac{d}{ds}(\Gamma(s)(x)-\min\Gamma(s))\big\vert_{s=t^*}\neq0$. Then, because $\Gamma(s)(x)-\min\Gamma(s)$ is continuous [since $\Gamma(s)(x)$ and $\min\Gamma(s)$ are both (absolutely) continuous] and because $t^*\in(0,t)$ and $\Gamma(t^*)(x)-\min\Gamma(t^*)=0$, it follows that there is some $t'\in(0,t)$ such that $\Gamma(t')(x)-\min\Gamma(t')<0$ or, equivalently, such that $\Gamma(t')(x)<\min\Gamma(t')$. Since this is clearly a contradiction, it follows that
\begin{equation}\label{eq:equalderivative}
\frac{d}{ds}\Gamma(s)(x)\Big\vert_{s=t^*}=\frac{d}{ds}\min\Gamma(s)\Big\vert_{s=t^*}.
\end{equation}
We also have that
\begin{equation*}
\frac{d}{ds}\Gamma(s)(x)\Big\vert_{s=t^*}
\geq\lrate\big(\Gamma(t^*)\big)(x)
\geq\big(\Gamma(t^*)(x)-\min\Gamma(t^*)\big)\lrate(\ind{x})(x)=0,
\end{equation*}
where the second inequality follows from \ref{LR:gamblebound} and the last equality follows because $\Gamma(t^*)(x)=\min\Gamma(t^*)$. By combining this result with Equation~\eqref{eq:equalderivative}, we find that, for all $t^*\in(0,t)$ for which $\min\Gamma(s)$ has a derivative, $\frac{d}{ds}\min\Gamma(s)\big\vert_{s=t^*}\geq0$. It therefore follows from Equation~\eqref{eq:lebesgue} that $\min\Gamma(t)\geq\min\Gamma(0)$.
\end{proof}

\begin{proof}[Proof of Proposition~\ref{prop:islowertransoperator}]
We first prove \ref{L:bound}. Consider any $f\in\gamblesX$. It then follows from Lemma~\ref{lemma:continouslydifferentiable} that $\lt_sf$ is continuously differentiable on $[0,t]$. Therefore, and because of Equation~\eqref{eq:differential}, we infer from Lemma~\ref{lemma:minimumincreases} that $\min\lt_tf\geq\min\lt_0f$. Since $\lt_0f=f$, this implies that $\min\lt_tf\geq\min f$, which in turn implies that $\lt_tf\geq\min f$.

Let us now prove~\ref{L:subadditivity}. Consider any $f,g\in\gamblesX$. 
It follows from Lemma~\ref{lemma:continouslydifferentiable} that $\lt_sf$, $\lt_sg$ and $\lt_s(f+g)$ are continuously differentiable on $[0,t]$, which implies that $\Gamma(s)\coloneqq\lt_s(f+g)-\lt_sf-\lt_sg$ is continuously differentiable on $[0,t]$.
Furthermore, for all $s\in[0,t]$, it follows from Equation~\eqref{eq:differential} and~\ref{LR:subadditive} that
\begin{align*}
\frac{d}{ds}\Gamma(s)
&=\frac{d}{ds}\lt_s(f+g)-\frac{d}{ds}\lt_sf-\frac{d}{ds}\lt_sg\\
&=\lrate\lt_s(f+g)-\lrate\lt_sf-\lrate\lt_sg\\
&=\lrate\big(\Gamma(s)+\lt_sf+\lt_sg\big)-\lrate\lt_sf-\lrate\lt_sg
\geq\lrate\Gamma(s).
\end{align*}
 Therefore, we infer from Lemma~\ref{lemma:minimumincreases} that $\min\Gamma(t)\geq\min\Gamma(0)$. Since $\Gamma(0)=\lt_0(f+g)-\lt_0f-\lt_0g=0$, this implies that $\min\Gamma(t)\geq0$, which in turn implies that $\Gamma(t)\geq0$ or, equivalently, that $\lt_t(f+g)\geq\lt_tf+\lt_tg$.

We end by proving~\ref{L:homo}. Consider any $f\in\gamblesX$ and $\lambda\geq0$. It then follows from Equation~\eqref{eq:differential} and \ref{LR:homo} that
\begin{equation*}
\frac{d}{ds}(\lambda\lt_sf)
=\lambda\frac{d}{ds}\lt_sf
=\lambda\lrate\lt_sf
=\lrate(\lambda\lt_sf)
\text{~for all $s\geq0$}.
\end{equation*}
Since we also have that $\lambda\lt_0f=\lambda f=\lt_0(\lambda f)$, it follows that $\lambda\lt_sf$ satisfies the same differential equation and boundary condition as $\lt_s(\lambda f)$. Since we know that this differential equation and boundary condition lead to a unique solution on $[0,\infty)$, it follows that $\lt_t(\lambda f)=\lambda\lt_t(f)$.
\end{proof}

\begin{lemma}\label{lemma:boundaryderivative}
Let $\lrate$ be a lower transition rate operator. Then 
\begin{equation*}
\lim_{\Delta\to0^+}\lt_\Delta=I
\text{ and }
\lim_{\Delta\to0^+}\nicefrac{1}{\Delta}(\lt_\Delta-I)=\lrate.
\end{equation*}
\end{lemma}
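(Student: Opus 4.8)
The plan is to prove both statements as convergence in the operator norm by deriving uniform-in-$f$ bounds from the integral form of the defining differential Equation~\eqref{eq:differential}. By Lemma~\ref{lemma:continouslydifferentiable}, for every $f\in\gamblesX$ the map $s\mapsto\lt_sf$ is continuously differentiable, so the fundamental theorem of calculus applies componentwise and yields, for all $\Delta\geq0$,
\begin{equation*}
\lt_\Delta f-f=\int_0^\Delta\lrate\lt_sf\,ds.
\end{equation*}
The whole argument then rests on bounding the integrand uniformly over all $f$ with $\norm{f}=1$, which is what upgrades pointwise estimates to operator-norm convergence.

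For the first limit, I would use that $\lt_s$ is a lower transition operator [Proposition~\ref{prop:islowertransoperator}], so $\norm{\lt_s}\leq1$ by \ref{L:lowerthanone}, together with the non-negative homogeneity of $\lrate$, which via \ref{N:gambleproduct} gives $\norm{\lrate\lt_sf}\leq\norm{\lrate}\norm{\lt_sf}\leq\norm{\lrate}\norm{f}$. Substituting this into the integral representation gives $\norm{\lt_\Delta f-f}\leq\Delta\norm{\lrate}\norm{f}$, and taking the supremum over $\norm{f}=1$ yields $\norm{\lt_\Delta-I}\leq\Delta\norm{\lrate}$, which tends to $0$ as $\Delta\to0^+$.

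For the second limit, I would subtract $\lrate f$ and again use the integral representation to write
\begin{equation*}
\frac{1}{\Delta}(\lt_\Delta f-f)-\lrate f=\frac{1}{\Delta}\int_0^\Delta(\lrate\lt_sf-\lrate f)\,ds.
\end{equation*}
The key estimate is the Lipschitz-type property \ref{LR:gambleinequality}, namely $\norm{\lrate\lt_sf-\lrate f}\leq2\norm{\lrate}\norm{\lt_sf-f}$, combined with the first-part bound $\norm{\lt_sf-f}\leq s\norm{\lrate}\norm{f}$. For $\norm{f}=1$ this bounds the integrand by $2s\norm{\lrate}^2$, so
\begin{equation*}
\norm{\frac{1}{\Delta}(\lt_\Delta f-f)-\lrate f}\leq\frac{1}{\Delta}\int_0^\Delta2s\norm{\lrate}^2\,ds=\Delta\norm{\lrate}^2.
\end{equation*}
Taking the supremum over $\norm{f}=1$ gives $\norm{\nicefrac{1}{\Delta}(\lt_\Delta-I)-\lrate}\leq\Delta\norm{\lrate}^2\to0$.

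The main obstacle—really the only delicate point—is ensuring that both limits hold in operator norm rather than merely pointwise, which is precisely why the bounds must be uniform over the unit sphere; the properties \ref{L:lowerthanone}, \ref{N:gambleproduct} and \ref{LR:gambleinequality} are exactly what make this possible. The degenerate case $\lrate=0$ (hence $\norm{\lrate}=0$ and $\lt_\Delta=I$) is covered automatically, since the derived bounds then vanish. I do not expect the justification of the integral representation itself to cause difficulty, as it follows immediately from the continuous differentiability established in Lemma~\ref{lemma:continouslydifferentiable}.
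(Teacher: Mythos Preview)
Your proof is correct, but it takes a genuinely different route from the paper's. The paper first establishes pointwise convergence only---using continuity of $s\mapsto\lt_sf$ for the first limit and the very definition of the derivative in Equation~\eqref{eq:differential} for the second---and then upgrades to operator-norm convergence by invoking Propositions~\ref{prop:Luniform} and~\ref{prop:Quniform}, the compactness-based results that say pointwise convergence of lower transition (rate) operators implies norm convergence. Your argument bypasses those propositions entirely: by passing to the integral form and using \ref{L:lowerthanone}, \ref{N:gambleproduct} and \ref{LR:gambleinequality}, you obtain explicit bounds $\norm{\lt_\Delta-I}\leq\Delta\norm{\lrate}$ and $\norm{\nicefrac{1}{\Delta}(\lt_\Delta-I)-\lrate}\leq\Delta\norm{\lrate}^2$ that are uniform over the unit sphere from the outset. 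This is more elementary (no Bolzano--Weierstrass) and more informative, since it gives an explicit rate of convergence; the paper's approach, on the other hand, illustrates the utility of the pointwise-to-uniform machinery it has already built.
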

\begin{proof}
For any $f\in\gamblesX$, it follows from Equation~\eqref{eq:differential} that $\lt_tf$ is continuous on $[0,\infty)$, which implies that $\lim_{\Delta\to0^+}\lt_\Delta f=\lt_0f=f$. Therefore, we infer from Proposition~\ref{prop:Luniform} that $\lim_{\Delta\to0^+}\lt_\Delta=I$, which proves the first part of this lemma. We end by proving the second part. For any $f\in\gamblesX$, it follows from Equation~\eqref{eq:differential} that
\begin{equation*}
\lim_{\Delta\to0^+}\nicefrac{1}{\Delta}(\lt_\Delta -I)(f)
=\lim_{\Delta\to0^+}\nicefrac{1}{\Delta}(\lt_\Delta f -f)
=\lim_{\Delta\to0^+}\nicefrac{1}{\Delta}(\lt_\Delta f-\lt_0f)=\lrate\lt_0 f=\lrate f.
\end{equation*}
Therefore, and since, for all $\Delta>0$, $\nicefrac{1}{\Delta}(\lt_\Delta-I)$ is a lower transition rate operator because of Proposition~\ref{prop:fromLtoQ}, it follows from Proposition~\ref{prop:Quniform} that $\lim_{\Delta\to0^+}\nicefrac{1}{\Delta}(\lt_\Delta-I)=\lrate$.
\end{proof}

\begin{proof}[Proof of Proposition~\ref{prop:operatordifferential}]
Since $\lt_0f\coloneqq f$ for all $f\in\gamblesX$, it follows trivially that $\lt_0=I$.
Consider now any $t\geq0$. In order to prove that $\frac{d}{dt}\lt_t=\lrate\lt_t$, it suffices to show that for all $\epsilon>0$, there is some $\delta>0$ such that 
\begin{equation}\label{eq:operatorderivativehulp}
\norm{\frac{\lt_s-\lt_t}{s-t}-\lrate\lt_t}<\epsilon
\text{ for all $s\geq0$ such that $0<\abs{t-s}<\delta$.}
\end{equation}
So consider any $\epsilon>0$. 
If $\lrate=0$, Equation~\eqref{eq:operatorderivativehulp} is trivially true because, since $I$ clearly satisfies Equation~\eqref{eq:differential}, it follows from the unicity of the solution of Equation~\eqref{eq:differential} that $\lt_t=\lt_s=I$. Therefore, in the remainder of this proof, we may assume that $\lrate\neq0$, which implies that $\norm{\lrate}\neq0$. It then follows from Lemma~\ref{lemma:boundaryderivative} that there are $\delta_1>0$ and $\delta_2>0$ such that $\norm{\lt_q-I}<\nicefrac{\epsilon}{4\norm{\lrate}}$ for all $0<q<\delta_1$ and $\norm{\nicefrac{1}{\Delta}(\lt_\Delta-I)-\lrate}<\nicefrac{\epsilon}{2}$ for all $0<\Delta<\delta_2$. Now define $\delta\coloneqq\min\{\delta_1,\delta_2\}$ and consider any $s\geq0$ such that $0<\abs{t-s}<\delta$. Let $u\coloneqq\min\{s,t\}$, $\Delta\coloneqq\abs{t-s}$ and $q\coloneqq t-u$, which implies that $0\leq q\leq\Delta<\delta\leq\delta_1$ and $0<\Delta<\delta\leq\delta_2$. If $q=0$, then $\lt_q=\lt_0=I$ and therefore $\norm{\lrate\lt_q-\lrate}=\norm{\lrate-\lrate}=0$. If $q>0$, it follows from~\ref{LR:operatorinequality} and Proposition~\ref{prop:islowertransoperator} that $\norm{\lrate\lt_q-\lrate}\leq2\norm{\lrate}\norm{\lt_q-I}<2\norm{\lrate}\nicefrac{\epsilon}{4\norm{\lrate}}=\nicefrac{\epsilon}{2}$. Hence, in all cases, we find that $\norm{\lrate\lt_q-\lrate}<\nicefrac{\epsilon}{2}$. The result now holds because
\begin{align*}
\norm{\frac{\lt_s-\lt_t}{s-t}-\lrate\lt_t}
&=\norm{\frac{\lt_{\Delta+u}-\lt_u}{\Delta}-\lrate\lt_{q+u}}
=\norm{\frac{\lt_\Delta\lt_u-\lt_u}{\Delta}-\lrate\lt_q\lt_u}\\
&\leq\norm{\frac{\lt_{\Delta}-I}{\Delta}-\lrate\lt_{q}}\norm{\lt_u}
\leq\norm{\frac{\lt_{\Delta}-I}{\Delta}-\lrate\lt_{q}}\\
&\leq\norm{\frac{\lt_{\Delta}-I}{\Delta}-\lrate}+\norm{\lrate\lt_q-\lrate}
<\frac{\epsilon}{2}+\frac{\epsilon}{2}=\epsilon,
\end{align*}
where the second equality follows from Equation~\eqref{eq:decomposition}, the first inequality follows from \ref{N:operatorproduct} and the second inequality follows from Proposition~\ref{prop:islowertransoperator} and \ref{L:lowerthanone}.
\end{proof}

\begin{proof}[Proof of Proposition~\ref{prop:approximation}]
The result is trivial if $t=0$. In the remainder of this proof, we assume that $t>0$.
The result for $\lrate=0$ is also trivial because, since $I$ then clearly satisfies Equation~\eqref{eq:differential}, it follows from the unicity of the solution of Equation~\eqref{eq:differential} that $\lt_t=I$. Therefore, in the remainder of this proof, we assume that $\lrate\neq0$, which implies that $\norm{\lrate}\neq0$. We will now prove that for every $\epsilon>0$, there is some $n\in\nats$ such that
\begin{equation*}
\norm{\lt_t-(I+\frac{t}{k}\lrate)^k}<\epsilon \text{ for all $k\geq n$.}
\end{equation*}
So consider any $\epsilon>0$. It then follows from Lemma~\ref{lemma:boundaryderivative} that there is some $\delta>0$ such that $\norm{\nicefrac{1}{\Delta}(\lt_\Delta-I)-\lrate}<\nicefrac{\epsilon}{t}$ for all $0<\Delta<\delta$. 
Now choose $n\in\nats$ such that $n>\max\{\nicefrac{t}{\delta},t\norm{\lrate}\}$ and consider any $k\geq n$. Let $\Delta\coloneqq\nicefrac{t}{k}\leq\nicefrac{t}{n}$, which implies that $0<\Delta<\delta$ and $\Delta\norm{\lrate}<1$. Then
\begin{align*}
&\norm{(\lt_\Delta)^k-(I+\Delta\lrate)^k}\\
&\quad\quad=\norm{(\lt_\Delta)^k-(\lt_\Delta)^{k-1}(I+\Delta\lrate)+(\lt_\Delta)^{k-1}(I+\Delta\lrate)-(I+\Delta\lrate)^k}\\
&\quad\quad\leq\norm{(\lt_\Delta)^k-(\lt_\Delta)^{k-1}(I+\Delta\lrate)}+\norm{(\lt_\Delta)^{k-1}(I+\Delta\lrate)-(I+\Delta\lrate)^k}\\
&\quad\quad\leq\norm{\lt_\Delta-(I+\Delta\lrate)}+\norm{(\lt_\Delta)^{k-1}-(I+\Delta\lrate)^{k-1}}\norm{I+\Delta\lrate}\\
&\quad\quad\leq\norm{\lt_\Delta-(I+\Delta\lrate)}+\norm{(\lt_\Delta)^{k-1}-(I+\Delta\lrate)^{k-1}},
\end{align*}
where the second inequality follows from Proposition~\ref{prop:islowertransoperator} and \ref{L:operatorinequality} [by applying them repeatedly] and \ref{N:operatorproduct}, and the third inequality follows from \ref{L:lowerthanone} and Proposition~\ref{prop:fromQtoL}. By continuing in this way, we find that
\begin{equation*}
\norm{(\lt_\Delta)^k-(I+\Delta\lrate)^k}
\leq
k\norm{\lt_\Delta-(I+\Delta\lrate)}.
\end{equation*}
Therefore, since
\begin{equation*}
\norm{\lt_\Delta-(I+\Delta\lrate)}
=\Delta\norm{\frac{\lt_\Delta-I}{\Delta}-\lrate}
<\Delta\frac{\epsilon}{t}=\frac{t}{k}\frac{\epsilon}{t}=\frac{\epsilon}{k},
\end{equation*}
and because it follows from Equation~\eqref{eq:decomposition} that $\lt_t=(\lt_{\nicefrac{t}{k}})^k=(\lt_{\Delta})^k$, we find that
\begin{equation*}
\norm{\lt_t-(I+\frac{t}{k}\lrate)^k}
=\norm{(\lt_{\Delta})^k-(I+\Delta\lrate)^k}
\leq
k\norm{\lt_\Delta-(I+\Delta\lrate)}<k\frac{\epsilon}{k}=\epsilon.
\end{equation*}
\end{proof}

\subsection{Proofs of results in Section~\ref{sec:lowertransrateergodic}}\label{app:lowertransrateergodicproofs}

\begin{proof}[Proof of Proposition~\ref{prop:firstequivalences}]
First assume that $\lrate$ is ergodic. For all $f\in\gamblesX$, it then follows from Definition~\ref{def:lowertransrateergodic} that $\lim_{s\to\infty}\lt_s f$ exists and is a constant function. Therefore, for all $f\in\gamblesX$, it follows from Equation~\eqref{eq:decomposition} that
\begin{equation*}
\lim_{n\to\infty}\lt_t^n f=\lim_{n\to\infty}\lt_{nt} f=\lim_{s\to\infty}\lt_s f
\end{equation*}
exists and is a constant function, which implies that $\lt_t$ is ergodic. 

Next, assume that $\lt_t$ is ergodic. 
This means that, for all $f\in\gamblesX$, there is some $c_f\in\reals$ such that
\begin{equation}\label{eq:limitlts}
(\forall\epsilon>0)(\exists n\in\nats)(\forall k\geq n)
\norm{\lt_t^k f-c_f}<\epsilon.
\end{equation} 
Consider now any $f\in\gamblesX$ and any $\epsilon>0$. It then follows from Equation~\eqref{eq:limitlts} that there is some $n_\epsilon\in\nats$ such that $\norm{\lt_t^{n_\epsilon} f-c_f}<\epsilon$, which, because of Proposition~\ref{prop:islowertransoperator} and~\ref{L:constantadditivity}, implies that $\norm{\lt_t^{n_\epsilon}(f-c_f)}<\epsilon$. Now let $s_\epsilon\coloneqq n_\epsilon t$. Then for all $s\geq s_\epsilon$, we have that
\begin{equation*}
\norm{\lt_s f - c_f}
=\norm{\lt_s(f - c_f)}
=\norm{\lt_{s-s_\epsilon}\lt_{t}^{n_\epsilon}(f - c_f)}
\leq\norm{\lt_{s-s_\epsilon}}\norm{\lt_{t}^{n_\epsilon}(f - c_f)}
<\epsilon,
\end{equation*}
where the first equality follows from Proposition~\ref{prop:islowertransoperator} and \ref{L:constantadditivity}, the second equality follows from Equation~\eqref{eq:decomposition}, the first inequality follows from \ref{N:gambleproduct} and the last inequality follows from Proposition~\ref{prop:islowertransoperator}, \ref{L:lowerthanone} and the fact that $\norm{\lt_t^{n_\epsilon} f-c_f}<\epsilon$.
Hence, we have found that for all $\epsilon>0$, there is some $s_\epsilon>0$ such that $\norm{\lt_sf-c_f}<\epsilon$ for all $s\geq s_\epsilon$. In other words: $\lim_{s\to\infty}\lt_s f=c_f$. Since this is true for all \mbox{$f\in\gamblesX$}, it follows from Definition~\ref{def:lowertransrateergodic} that $\lrate$ is ergodic.
\end{proof}

\begin{proof}[Proof of Corollary~\ref{corol:ergodiciffdiscreteregularlyabsorbing}]
Immediate consequence of Propositions~\ref{prop:filip},~\ref{prop:islowertransoperator} and~\ref{prop:firstequivalences}.
\end{proof}

\begin{lemma}\label{lemma:stayingabovetheminimum}
Let $\lrate$ be a lower transition rate operator. Consider any $f\in\gamblesX$ and $x\in\states$ such that $f(x)>\min f$. Then for all $t\geq0$: $\lt_t f(x)>\min f$.
\end{lemma}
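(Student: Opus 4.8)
The plan is to track the single real-valued function $t \mapsto \lt_t f(x)$ and to control it from below by a simple exponential. Write $m \coloneqq \min f$ and $g_t \coloneqq \lt_t f$. Since Proposition~\ref{prop:islowertransoperator} guarantees that $\lt_t$ is a lower transition operator, property~\ref{L:bound} gives $g_t \geq m$, and in particular $\min g_t \geq m$ for every $t \geq 0$; this is what keeps everything anchored at the original minimum. The quantity I actually want to keep strictly positive is $\psi(t) \coloneqq g_t(x) - m$: indeed $\psi(t) \geq 0$ automatically, $\psi(0) = f(x) - m > 0$ by hypothesis, and the claim $\lt_t f(x) > \min f$ is exactly $\psi(t) > 0$.

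First I would record that $\psi$ is well behaved: by Lemma~\ref{lemma:continouslydifferentiable} the map $s \mapsto \lt_s f$ is continuously differentiable, so evaluating at $x$ shows that $\psi$ is continuously differentiable with $\psi'(t) = \lrate(g_t)(x)$, using Equation~\eqref{eq:differential}. The central estimate is then a differential inequality $\psi'(t) \geq c\,\psi(t)$, where $c \coloneqq \lrate(\ind{x})(x)$. To obtain it, apply~\ref{LR:gamblebound} to $g_t$ to get $\psi'(t) = \lrate(g_t)(x) \geq (g_t(x) - \min g_t)\,\lrate(\ind{x})(x)$. Now note that $c \leq 0$ (combine~\ref{LR:lowerbelowupper} and~\ref{LR:diagonalnegative}) and that $0 \leq g_t(x) - \min g_t \leq g_t(x) - m = \psi(t)$, the last step using $\min g_t \geq m$. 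Multiplying the inequality $g_t(x) - \min g_t \leq \psi(t)$ by the nonpositive number $c$ reverses it, and substituting back yields $\psi'(t) \geq c\,\psi(t)$.

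Finally I would integrate this inequality with an integrating factor: set $\phi(t) \coloneqq \psi(t)e^{-ct}$, so that $\phi'(t) = e^{-ct}\big(\psi'(t) - c\,\psi(t)\big) \geq 0$. Since $\phi$ is continuously differentiable with nonnegative derivative it is nondecreasing, whence $\phi(t) \geq \phi(0) = \psi(0) = f(x) - m > 0$ for all $t \geq 0$. Multiplying back by $e^{ct} > 0$ gives $\psi(t) \geq (f(x) - m)e^{ct} > 0$, which is precisely $\lt_t f(x) > \min f$.

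The argument is essentially routine once the differential inequality is in place; the only point requiring care---and the step I expect to be the main obstacle---is the chain of sign manipulations that turns~\ref{LR:gamblebound} into $\psi'(t) \geq c\,\psi(t)$. The subtlety is that the coefficient $c = \lrate(\ind{x})(x)$ is nonpositive, so the comparison $g_t(x) - \min g_t \leq \psi(t)$ must be multiplied through carefully, since the inequality flips, and one must use $\min g_t \geq m$ rather than $\min g_t = m$, as the running minimum may strictly exceed the initial one.
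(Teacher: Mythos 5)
Your proof is correct and follows essentially the same route as the paper's: both reduce the claim to the differential inequality $\frac{d}{dt}\bigl(\lt_tf(x)-\min f\bigr)\geq \lrate(\ind{x})(x)\,\bigl(\lt_tf(x)-\min f\bigr)$ and integrate it to obtain the exponential lower bound $r_0(x)e^{[\lrate(\ind{x})(x)]t}>0$. The only cosmetic difference is that the paper derives this inequality in one step from \ref{LR:subadditive}, \ref{LR:homo} and \ref{LR:nondiagpositive} applied to the decomposition $r_t=\sum_{y\in\states}r_t(y)\ind{y}$, whereas you route it through \ref{LR:gamblebound} and then handle the gap between $\min \lt_tf$ and $\min f$ via the sign of $\lrate(\ind{x})(x)\leq0$; both derivations are valid.
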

\begin{proof}
Since we know from Lemma~\ref{lemma:continouslydifferentiable} that $\lt_tf$ is continuously differentiable on $[0,\infty)$, we know that $r_t\coloneqq \lt_t f-\min f$ and therefore also 
$r_t(x)$ is continuously differentiable on $[0,\infty)$. 
Furthermore, for all $t\geq0$, it follows from Proposition~\ref{prop:islowertransoperator} and~\ref{L:bound} that $r_t\geq0$, which in turn implies that
\begin{equation*}
\frac{d}{dt}r_t(x)
=\frac{d}{dt}\lt_tf(x)
=\lrate(\lt_tf)(x)
=\lrate(r_t)(x)
\geq\sum_{y\in\states}r_t(y)\lrate(\ind{y})(x)
\geq r_t(x)\lrate(\ind{x})(x),
\end{equation*}
where the second equality follows from Equation~\eqref{eq:differential}, the third equality follows from \ref{LR:removeconstant}, the first inequality follows from~\ref{LR:subadditive} and~\ref{LR:homo} and the last inequality follows from \ref{LR:nondiagpositive}.
Hence, for all $t\geq0$, we find that $r_t(x)\geq r_0(x)e^{[\lrate(\ind{x})(x)]t}$.
Since we also know that $r_0(x)=\lt_0f(x)-\min f=f(x)-\min f>0$, this implies that for all $t\geq0$:
\begin{equation*}
\lt_tf(x)-\min f=r_t(x)\geq r_0(x)e^{[\lrate(\ind{x})(x)]t}>0.
\end{equation*}
\end{proof}

\begin{lemma}\label{lemma:increasing}
Let $\lrate$ be a lower transition rate operator. Consider any $f\in\gamblesX$, $x\in\states$ and $s\geq0$ such that $\lt_sf(x)>\min f$. Then for all $t\geq s$: $\lt_tf(x)>\min f$.
\end{lemma}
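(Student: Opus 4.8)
The plan is to reduce the claim to the already-established Lemma~\ref{lemma:stayingabovetheminimum} by means of the semigroup property~\eqref{eq:decomposition}. The key idea is to regard $g\coloneqq\lt_sf$ as a new initial function. For any $t\geq s$, writing $t=s+u$ with $u\coloneqq t-s\geq0$, the semigroup identity gives $\lt_tf=\lt_{u+s}f=\lt_u\lt_sf=\lt_ug$, so it suffices to prove that $\lt_ug(x)>\min f$ for every $u\geq0$.

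First I would record the elementary but crucial fact that $\min g\geq\min f$. Indeed, since $\lt_s$ is a lower transition operator by Proposition~\ref{prop:islowertransoperator}, property~\ref{L:bound} gives $g=\lt_sf\geq\min f$ pointwise, hence $\min g\geq\min f$. Together with the hypothesis $g(x)>\min f$, this leaves exactly two cases. If $\min g>\min f$, then applying~\ref{L:bound} to the lower transition operator $\lt_u$ yields $\lt_ug(x)\geq\min g>\min f$ immediately, without even invoking the earlier lemma. If instead $\min g=\min f$, then $g(x)>\min f=\min g$, so $g$ and $x$ satisfy the hypothesis of Lemma~\ref{lemma:stayingabovetheminimum}; that lemma then gives $\lt_ug(x)>\min g=\min f$ for all $u\geq0$. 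In both cases $\lt_ug(x)>\min f$, and translating back through $\lt_tf=\lt_ug$ establishes the result for all $t\geq s$.

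I do not anticipate a genuine obstacle here. The only point requiring care is that Lemma~\ref{lemma:stayingabovetheminimum} has hypothesis $g(x)>\min g$, whereas what the present hypothesis directly supplies is $g(x)>\min f$; since $\min g$ may strictly exceed $\min f$, one cannot apply the earlier lemma blindly. The case split on whether $\min g=\min f$ or $\min g>\min f$ is precisely what bridges this gap, with property~\ref{L:bound} covering the strict case on its own.
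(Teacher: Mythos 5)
Your proposal is correct and follows essentially the same route as the paper's own proof: decompose via the semigroup property as $\lt_tf=\lt_{t-s}\lt_sf$, then split on whether $\min\lt_sf>\min f$ (handled by \ref{L:bound}) or $\min\lt_sf=\min f$ (handled by Lemma~\ref{lemma:stayingabovetheminimum}). The only cosmetic difference is that you explicitly name $g=\lt_sf$ and note up front that $\min g\geq\min f$, which the paper states as the impossibility of $\min\lt_sf<\min f$.
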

\begin{proof}
Because of Equation~\eqref{eq:decomposition}, it suffices to prove that $\lt_{t-s}\lt_s f(x)>\min f$. We consider two cases: $\min \lt_s f>\min f$ and $\min \lt_s f=\min f$; $\min \lt_s f<\min f$ is not possible because of Proposition~\ref{prop:islowertransoperator} and~\ref{L:bound}. If $\min \lt_s f>\min f$, it follows from Proposition~\ref{prop:islowertransoperator} and~\ref{L:bound} that $\lt_{t-s}\lt_s f(x)\geq\min \lt_s f>\min f$.
If $\min \lt_s f=\min f$, then $\lt_sf(x)>\min\lt_s f$ and therefore, because of Lemma~\ref{lemma:stayingabovetheminimum}, $\lt_{t-s}\lt_s f(x)>\min\lt_s f=\min f$.
\end{proof}


\begin{lemma}\label{lemma:constantsignlower}
Let $\lrate$ be a lower transition rate operator. Consider any $f\in\gamblesX$, $x\in\states$ and $t,s>0$. Then
\begin{equation*}
\lt_tf(x)>\min f\asa\lt_sf(x)>\min f.
\end{equation*}
\end{lemma}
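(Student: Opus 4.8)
The statement is symmetric in $s$ and $t$ and trivial when $s=t$, so I would assume $0<s<t$ throughout. Writing $m\coloneqq\min f$ and $r_u\coloneqq\lt_u f-m$, note first that \ref{L:bound} together with Proposition~\ref{prop:islowertransoperator} gives $r_u\geq0$ for all $u\geq0$, so that ``$\lt_u f(x)>\min f$'' is exactly the statement $r_u(x)>0$. The forward implication $\lt_s f(x)>\min f\then\lt_t f(x)>\min f$ is then immediate from Lemma~\ref{lemma:increasing}: once the gap at $x$ is strictly positive it stays strictly positive for all later times. The whole difficulty is therefore the backward implication $\lt_t f(x)>\min f\then\lt_s f(x)>\min f$, that is, ruling out a \emph{delayed escape} in which the trajectory sits exactly at the minimum at $x$ up to some positive time and only then lifts off.

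To organise this I would consider the escape threshold $\tau\coloneqq\sup\{u\geq0\colon \lt_w f(x)=m \text{ for all } w\in[0,u]\}$. If $\tau=\infty$ then $\lt_u f(x)=m$ for every $u$ and both sides of the equivalence are false, while if $\tau=0$ then Lemma~\ref{lemma:increasing} (or Lemma~\ref{lemma:stayingabovetheminimum} when $f(x)>\min f$) forces $\lt_u f(x)>m$ for every $u>0$ and both sides are true; in either case the equivalence holds. By the contrapositive of Lemma~\ref{lemma:increasing} the set $\{u>0\colon r_u(x)=0\}$ is downward closed, hence of the form $(0,\tau]$, and by continuity of $u\mapsto\lt_u f(x)$ (Lemma~\ref{lemma:continouslydifferentiable}) one has $\lt_\tau f(x)=m$. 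The remaining task is to show that $0<\tau<\infty$ cannot occur, and for this I would prove that the trajectory in fact stays pinned at $m$ on some interval beyond $\tau$, contradicting the definition of $\tau$.

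The key mechanism is a propagation argument for the ``stuck'' set $S\coloneqq\{z\in\states\colon r_u(z)=0 \text{ for all } u\in(0,\tau]\}$, which contains $x$. For any $z\in S$ the map $u\mapsto\lt_u f(z)$ is constant on $(0,\tau)$, so $\frac{d}{du}\lt_u f(z)=\lrate(\lt_u f)(z)=0$ there. Writing $r_u=\sum_{y}r_u(y)\ind{y}$ and using superadditivity \ref{LR:subadditive}, homogeneity \ref{LR:homo}, the constant-invariance \ref{LR:removeconstant} and the sign condition \ref{LR:nondiagpositive}, this forces
\begin{equation*}
0=\lrate(\lt_u f)(z)\geq\sum_{y\neq z}r_u(y)\,\lrate(\ind{y})(z)\geq0,
\end{equation*}
where the diagonal term drops out because $r_u(z)=0$. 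Since every summand is nonnegative, each must vanish, so $r_u(y)=0$ whenever $\lrate(\ind{y})(z)>0$; hence $S$ is closed under $\lrate$-successors. I would also record the harmless reduction $f\mapsto\ind{A}$ with $A\coloneqq\{z\colon f(z)>\min f\}$: by \ref{L:monotone} and \ref{L:homo} one has $c\,\lt_u\ind{A}\leq\lt_u(f-m)\leq C\,\lt_u\ind{A}$ for suitable constants $0<c\leq C$, so the sign of $r_u(x)$ agrees with that of $\lt_u\ind{A}(x)$, which both reconnects the lemma to lower reachability (Definition~\ref{def:lowerreachability}) and slightly streamlines the algebra above.

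The main obstacle is the final step: turning successor-closure of $S$ into genuine \emph{forward} invariance, i.e.\ $\lt_v(\lt_\tau f)(z)=m$ for all $z\in S$ and all $v\geq0$, which immediately contradicts $0<\tau<\infty$. The subtlety is that the nonlinear flow need not respect $\lrate$-successors in forward time: upper bounds on $\lrate$ only come through the subadditive operator $\urate$ (via \ref{LR:lowerbelowupper}), whose ``successors'' may be strictly larger than the $\lrate$-successors that define $S$. I would attempt to close this with a Grönwall estimate on $\phi(v)\coloneqq\max_{z\in S}\bigl(\lt_{\tau+v}f(z)-m\bigr)\geq0$: at a maximiser $z^{*}\in S$ every $\lrate$-successor lies in $S$ and hence has value at most $\lt_{\tau+v}f(z^{*})$, and I would combine this with a maximum-decrease counterpart of Lemma~\ref{lemma:minimumincreases} (obtained by applying that lemma to $-\lt_{\tau+v}f$ and using conjugacy) to derive $\frac{d}{dv}\phi(v)\leq K\phi(v)$ for some constant $K$; since $\phi(0)=0$ this yields $\phi\equiv0$ near $v=0$ and then everywhere. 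Should the stray $\urate$-successor terms prove uncontrollable from the axioms alone, the fallback is to exploit the piecewise-linear structure of $\lrate$ and the real-analyticity of the flow inside each linear regime, which forbids a trajectory that is constant on $(0,\tau)$ yet non-constant immediately after $\tau$.
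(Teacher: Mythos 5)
The easy parts of your argument are fine: symmetry plus Lemma~\ref{lemma:increasing} reduces everything to ruling out a finite, strictly positive lift-off time $\tau$ at $x$, and your derivation that the stuck set $S$ is closed under $\lrate$-successors (from $0=\lrate(r_u)(z)\geq\sum_{y\neq z}r_u(y)\lrate(\ind{y})(z)\geq0$ on $(0,\tau)$) is correct. But the step you yourself flag as the obstacle is a genuine gap, and neither of your two proposed repairs closes it. The Gr\"onwall estimate needs an \emph{upper} bound on $\frac{d}{dv}r_{\tau+v}(z^*)=\lrate(r_{\tau+v})(z^*)$ at a maximiser $z^*\in S$, whereas superadditivity only delivers lower bounds of the form $\lrate(g)\geq\sum_y g(y)\lrate(\ind{y})$. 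The information you extracted --- $\lrate(\ind{y})(z^*)=0$ for every $y\notin S$ --- does not imply $\lrate\bigl(\sum_{y\notin S}c_y\ind{y}\bigr)(z^*)\leq0$: for a lower envelope of two rate matrices one can have $\lrate(\ind{y_1})(z^*)=\lrate(\ind{y_2})(z^*)=0$ while $\lrate(\ind{y_1}+\ind{y_2})(z^*)>0$, so the contribution of the states outside $S$ to the derivative at $z^*$ is not controlled by $\phi(v)$, and the differential inequality $\frac{d}{dv}\phi\leq K\phi$ does not follow from \ref{LR:constant}--\ref{LR:nondiagpositive}. The fallback is also unavailable: a lower transition rate operator is only assumed superadditive, non-negatively homogeneous, vanishing on constants and with non-negative off-diagonal evaluations; it need not be piecewise linear (it may be the lower envelope of an infinite, non-polytopic family), so there is no finite collection of linear regimes in which the flow is real-analytic.

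The paper closes the hard direction by an entirely different, purely order-theoretic trick that avoids differentiating at $\tau$ altogether. Since $\states$ is finite and $\states_\tau\coloneqq\{y\colon\lt_\tau f(y)>\min f\}$ is increasing in $\tau$ (your Lemma~\ref{lemma:increasing} observation), $\states_\tau$ must be constant, equal to some $\states^*$, on an initial interval $(0,\tau^*]$. For any $\tau>\tau^*$ pick $n$ with $2\nicefrac{\tau}{n}\leq\tau^*$; then $r_{\nicefrac{\tau}{n}}$ and $r_{2\nicefrac{\tau}{n}}$ have the same zero set, so finiteness of $\states$ gives a $\lambda>0$ with $r_{2\nicefrac{\tau}{n}}\leq\lambda\,r_{\nicefrac{\tau}{n}}$, i.e.\ $\lt_{\nicefrac{\tau}{n}}^2(f-\min f)\leq\lambda\,\lt_{\nicefrac{\tau}{n}}(f-\min f)$. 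Applying $\lt_{\nicefrac{\tau}{n}}$ repeatedly and using \ref{L:monotone}, \ref{L:homo} and the semigroup property yields $r_\tau\leq\lambda^{n-1}r_{\nicefrac{\tau}{n}}$, so the zero set cannot shrink after $\tau^*$ either. If you want to salvage your own route, you would need a substitute for the Gr\"onwall step that uses only one-sided (super-)bounds --- which is essentially what this comparison argument provides.
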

\begin{proof}
For any $\tau\geq0$, let
\begin{equation}\label{eq:def:statestau}
\states_\tau\coloneqq\{y\in\states\colon \lt_\tau f(y)>\min f\}.
\end{equation}
It then follows from Lemma~\ref{lemma:increasing} that $\states_\tau$ is an increasing function of $\tau$:
\begin{equation}\label{eq:increasingfunction}
\tau\leq\tau'~\then~\states_{\tau}\subseteq\states_{\tau'}.
\end{equation}

\noindent
Assume \emph{ex absurdo} that
\begin{equation}\label{eq:neginitialconstant}
(\forall\tau'>0)~(\forall\states'\subseteq\states)~(\exists\tau\in(0,\tau'])~\states_\tau\neq\states'.
\end{equation}
Choose any $\tau_1>0$. Then clearly, $\states_{\tau_1}\subseteq\states$. Therefore, due to Equation~\eqref{eq:neginitialconstant}, we know that there is some $0<\tau_2<\tau_1$ such that $\states_{\tau_2}\neq\states_{\tau_1}$, which, because of Equation~\eqref{eq:increasingfunction}, implies that $\states_{\tau_2}\subset\states_{\tau_1}$. Similarly, we infer that there is some \mbox{$0<\tau_3<\tau_2$} such that \mbox{$\states_{\tau_3}\subset\states_{\tau_2}$}. By continuing in this way, we obtain an infinite sequence of time points $\tau_1>\tau_2>\tau_3>\cdots>\tau_i>\cdots>0$ such that $\states\supseteq\states_{\tau_1}\supset\states_{\tau_2}\supset\states_{\tau_3}\supset\cdots\supset\states_{\tau_i}\supset\cdots$. Since $\states$ is a finite set, this is a contradiction, leading us to conclude that Equation~\eqref{eq:neginitialconstant} is false. This implies that there is some $\tau^*>0$ and $\states^*\subseteq\states$ such that
\begin{equation}\label{eq:initialconstant}
(\forall\tau\in(0,\tau^*])~\states_\tau=\states^*.
\end{equation}

Fix any $\tau>\tau^*$ and choose $n\in\nats$ high enough such that $2\nicefrac{\tau}{n}\leq\tau^*$.
It then follows from Equation~\eqref{eq:initialconstant} that $\states_{\nicefrac{\tau}{n}}=\states_{2\nicefrac{\tau}{n}}=\states^*$. Furthermore, because of Proposition~\ref{prop:islowertransoperator},~\ref{L:bound} and Equation~\eqref{eq:def:statestau}, we know that $\lt_{\nicefrac{\tau}{n}}f(y)=\lt_{2\nicefrac{\tau}{n}}f(y)=\min f$ for all $y\in\states\setminus\states^*$.
Therefore, we infer from Equation~\eqref{eq:def:statestau} that there is some $\lambda>0$ such that
\begin{equation*}
\lt_{2\nicefrac{\tau}{n}}f-\min f\leq\lambda(\lt_{\nicefrac{\tau}{n}}f-\min f),
\end{equation*}
which, because of Equation~\eqref{eq:decomposition}, Proposition~\ref{prop:islowertransoperator} and \ref{L:constantadditivity} implies that
\begin{equation*}
\lt_{\nicefrac{\tau}{n}}^2(f-\min f)
=\lt_{2\nicefrac{\tau}{n}}(f-\min f)
=\lt_{2\nicefrac{\tau}{n}}f-\min f
\leq\lambda(\lt_{\nicefrac{\tau}{n}}f-\min f).
\end{equation*}
Hence, it follows from Proposition~\ref{prop:islowertransoperator}, \ref{L:constantadditivity}, Equation~\eqref{eq:decomposition},~\ref{L:monotone} and~\ref{L:homo} that
\begin{equation}\label{eq:constantinequality}
\lt_\tau f-\min f
=\lt_\tau (f-\min f)
=\lt_{\nicefrac{\tau}{n}}^n(f-\min f)
\leq\lambda^{n-1}(\lt_{\nicefrac{\tau}{n}}f-\min f).
\end{equation}
Consider now any $y\in\states\setminus\states^*$. Since $\lt_{\nicefrac{\tau}{n}}f(y)=\min f$, it follows from Equation~\eqref{eq:constantinequality} that $\lt_{\tau}f(y)\leq\min f$, which in turn implies that $y\notin\states_\tau$. Since this holds for all $y\in\states\setminus\states^*$, we find that $\states_\tau\subseteq\states^*=\states_{\nicefrac{\tau}{n}}$. Furthermore, since $\nicefrac{\tau}{n}\leq\tau$, it follows from Equation~\eqref{eq:increasingfunction} that $\states_{\nicefrac{\tau}{n}}\subseteq\states_\tau$. Hence, we find that $\states_\tau=\states^*$. Since this is true for all $\tau>\tau^*$, it follows from Equation~\eqref{eq:initialconstant} that
\begin{equation*}
\states_\tau=\states^*
\text{ for all $\tau>0$}.
\end{equation*}
Therefore, due to Equation~\eqref{eq:def:statestau}, we find that
\begin{equation*}
\lt_tf(x)>\min f
\asa x\in\states_t
\asa x\in\states_s
\asa\lt_sf(x)>\min f.
\end{equation*}
\end{proof}


\begin{lemma}\label{lemma:increasingupper}
Let $\lrate$ be a lower transition rate operator. Consider any $f\in\gamblesX$, $x\in\states$ and $s\geq0$ such that $\ut_sf(x)>\min f$. Then for all $t\geq s$: $\ut_tf(x)>\min f$.
\end{lemma}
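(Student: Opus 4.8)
The plan is to reduce the claim to an \emph{upper} analogue of Lemma~\ref{lemma:stayingabovetheminimum} and then copy, almost verbatim, the semigroup argument used for Lemma~\ref{lemma:increasing}. First I would record that, since $\ut_t f = -\lt_t(-f)$, the operators $\ut_t$ inherit from Equation~\eqref{eq:decomposition} the semigroup property $\ut_t = \ut_{t-s}\ut_s$ for $t\geq s$. Granting the auxiliary statement \emph{(UA): if $f(x)>\min f$ then $\ut_t f(x)>\min f$ for all $t\geq0$}, the lemma follows by splitting on $\min\ut_s f$. Writing $g\coloneqq\ut_s f$, property~\ref{L:bounds} gives $g\geq\min f$, so $\min g\geq\min f$. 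If $\min g>\min f$, then $\ut_t f(x)=\ut_{t-s}g(x)\geq\min g>\min f$ by~\ref{L:bounds}; if instead $\min g=\min f$, the hypothesis $\ut_s f(x)>\min f$ reads $g(x)>\min g$, and applying (UA) to $g$ yields $\ut_{t-s}g(x)>\min g=\min f$. Either way $\ut_t f(x)>\min f$. This is exactly the two-case structure of the proof of Lemma~\ref{lemma:increasing}, with $\lt$ replaced by $\ut$.

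To prove (UA) I would mirror the ODE argument of Lemma~\ref{lemma:stayingabovetheminimum}. Since $\ut_t f=-\lt_t(-f)$, Lemma~\ref{lemma:continouslydifferentiable} makes $\ut_t f$ continuously differentiable, and differentiating together with Equation~\eqref{eq:differential} and~\eqref{eq:Qconjugacy} gives the conjugate differential equation $\frac{d}{dt}\ut_t f=\urate(\ut_t f)$. Setting $r_t\coloneqq\ut_t f-\min f$, property~\ref{L:bounds} gives $r_t\geq0$, and since $\urate$ discards additive constants (from~\ref{LR:removeconstant} and~\eqref{eq:Qconjugacy}) we obtain $\frac{d}{dt}r_t(x)=\urate(r_t)(x)$. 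The goal is then the differential inequality $\frac{d}{dt}r_t(x)\geq\urate(\ind{x})(x)\,r_t(x)$, from which $r_t(x)\geq r_0(x)\,e^{[\urate(\ind{x})(x)]t}$ follows exactly as in Lemma~\ref{lemma:stayingabovetheminimum}; as $r_0(x)=f(x)-\min f>0$ this gives $r_t(x)>0$, i.e.\ (UA).

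The crux --- and the only place where the argument genuinely differs from the lower-operator case --- is the pointwise bound \emph{(B): $\urate(h)(x)\geq h(x)\,\urate(\ind{x})(x)$ for every $h\geq0$}. In the lower case the analogue is immediate, since superadditivity lets one discard the \emph{nonnegative} off-diagonal terms $h(y)\lrate(\ind{y})(x)$; here that move goes the wrong way because $\urate$ is only subadditive, and naive conjugacy is no help either --- applying Lemma~\ref{lemma:increasing} to $-f$ produces the dual ``stays below $\max$'' statement, not (UA). Instead I would isolate the off-diagonal part $k\coloneqq h-h(x)\ind{x}=\sum_{y\neq x}h(y)\ind{y}$, which satisfies $k\geq0$ and $k(x)=\min k=0$. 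Subadditivity of $\urate$ (from~\ref{LR:subadditive} and~\eqref{eq:Qconjugacy}) applied to $h(x)\ind{x}=h+(-k)$ gives $\urate(h)\geq\urate(h(x)\ind{x})-\urate(-k)$; evaluating at $x$, using non-negative homogeneity (so $\urate(h(x)\ind{x})(x)=h(x)\urate(\ind{x})(x)$, valid since $h(x)\geq0$) together with $-\urate(-k)(x)=\lrate(k)(x)$, reduces (B) to showing $\lrate(k)(x)\geq0$. This last inequality is precisely~\ref{LR:gamblebound} for $k$, since $k(x)-\min k=0$. With (B) in hand the differential inequality above follows because $r_t\geq0$, and the exponential comparison closes (UA), hence the lemma. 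I expect (B) to be the main obstacle, as it is the step where the superadditive proof strategy must be re-engineered for the subadditive operator $\urate$.
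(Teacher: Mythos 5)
Your proof is correct, but the route you take to your auxiliary statement (UA) --- the upper analogue of Lemma~\ref{lemma:stayingabovetheminimum} --- is genuinely different from, and considerably longer than, the paper's. The two-case split on $\min\ut_sf$ is exactly the paper's argument. Where you part ways is in the case $\min\ut_sf=\min f$: the paper simply observes that, by \ref{L:bounds}, $\ut_{t-s}g\geq\lt_{t-s}g$ for $g\coloneqq\ut_sf$, so the already-proved \emph{lower}-operator Lemma~\ref{lemma:stayingabovetheminimum} applies directly and gives $\ut_{t-s}g(x)\geq\lt_{t-s}g(x)>\min g=\min f$. You correctly note that naive conjugacy (applying Lemma~\ref{lemma:increasing} to $-f$) yields the wrong dual statement, but you overlook this one-line domination trick; instead you rebuild the entire ODE argument for the upper operator, deriving the conjugate equation $\frac{d}{dt}\ut_tf=\urate\,\ut_tf$ and the pointwise bound $\urate(h)(x)\geq h(x)\,\urate(\ind{x})(x)$ for $h\geq0$ via the decomposition $h(x)\ind{x}=h+(-k)$, subadditivity of $\urate$, the identity $-\urate(-k)=\lrate(k)$, and $\lrate(k)(x)\geq0$ from \ref{LR:gamblebound} with $k(x)=\min k=0$. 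All of these steps check out, and your exponential comparison even yields the marginally sharper rate $e^{[\urate(\ind{x})(x)]t}$ in place of $e^{[\lrate(\ind{x})(x)]t}$. So the proposal is a valid proof; it just pays with a fresh page of analysis for what the paper obtains from the single inequality $\lt_{t-s}\leq\ut_{t-s}$.
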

\begin{proof}
Because of Equations~\eqref{eq:Tconjugacy} and~\eqref{eq:decomposition}, it suffices to prove that $\ut_{t-s}\ut_s f(x)>\min f$. 
We consider two cases: $\min \ut_s f>\min f$ and $\min \ut_s f=\min f$; $\min \ut_s f<\min f$ is not possible because of Proposition~\ref{prop:islowertransoperator} and~\ref{L:bounds}. 
If $\min \ut_s f>\min f$, it follows from Proposition~\ref{prop:islowertransoperator} and~\ref{L:bounds} that $\ut_{t-s}\ut_s f(x)\geq\min \ut_s f>\min f$.
If $\min \ut_s f=\min f$, then $\ut_sf(x)>\min\ut_s f$ and therefore, it follows from Proposition~\ref{prop:islowertransoperator}, \ref{L:bounds} and Lemma~\ref{lemma:stayingabovetheminimum} that $\ut_{t-s}\ut_s f(x)\geq\lt_{t-s}\ut_s f(x)>\min\ut_s f=\min f$.
\end{proof}

\begin{lemma}\label{lemma:constantsignupper}
Let $\lrate$ be a lower transition rate operator. Consider any $f\in\gamblesX$, $x\in\states$ and $t,s>0$. Then
\begin{equation*}
\ut_tf(x)>\min f\asa\ut_sf(x)>\min f.
\end{equation*}
\end{lemma}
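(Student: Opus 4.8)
The plan is to mirror the proof of Lemma~\ref{lemma:constantsignlower} almost verbatim, with $\ut_\tau$ playing the role of $\lt_\tau$ throughout. The reason this works is that for each fixed $t\geq0$ the operator $\ut_t$ is the conjugate of the lower transition operator $\lt_t$ [Proposition~\ref{prop:islowertransoperator}], and as such it inherits exactly the handful of structural properties that the earlier argument actually exploited: it is monotone and non-negatively homogeneous, it satisfies constant additivity $\ut_t(f+\mu)=\ut_tf+\mu$, it obeys the semigroup identity $\ut_{t_1+t_2}=\ut_{t_1}\ut_{t_2}$ (which follows from Equation~\eqref{eq:decomposition} together with Equation~\eqref{eq:Tconjugacy}), and it is bounded below by the minimum, $\ut_tf\geq\lt_tf\geq\min f$, by~\ref{L:bounds}. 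Crucially, the superadditivity of $\lt$ was \emph{not} used in the decisive steps of the lower-case proof; it entered only through the monotonicity lemma, whose upper analogue---Lemma~\ref{lemma:increasingupper}---is already available.

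Concretely, I would fix $f\in\gamblesX$ and set $\states_\tau\coloneqq\{y\in\states\colon\ut_\tau f(y)>\min f\}$. Lemma~\ref{lemma:increasingupper} shows that $\states_\tau$ is nondecreasing in $\tau$. A finiteness argument identical to the one in Lemma~\ref{lemma:constantsignlower}---if no constant initial interval existed, one could extract a strictly decreasing infinite chain of subsets of the finite set $\states$, a contradiction---yields some $\tau^*>0$ and $\states^*\subseteq\states$ with $\states_\tau=\states^*$ for all $\tau\in(0,\tau^*]$. For $\tau>\tau^*$, choosing $n$ with $2\nicefrac{\tau}{n}\leq\tau^*$ gives $\states_{\nicefrac{\tau}{n}}=\states_{2\nicefrac{\tau}{n}}=\states^*$, so $\ut_{\nicefrac{\tau}{n}}f$ and $\ut_{2\nicefrac{\tau}{n}}f$ both equal $\min f$ exactly off $\states^*$; since $\states$ is finite there is some $\lambda>0$ with $\ut_{2\nicefrac{\tau}{n}}f-\min f\leq\lambda(\ut_{\nicefrac{\tau}{n}}f-\min f)$. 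Iterating this bound via the semigroup property, monotonicity and homogeneity gives $\ut_\tau f-\min f\leq\lambda^{n-1}(\ut_{\nicefrac{\tau}{n}}f-\min f)$, forcing $\states_\tau\subseteq\states^*$ and hence $\states_\tau=\states^*$. Thus $\states_\tau=\states^*$ for every $\tau>0$, and the claimed equivalence is just the statement that $x\in\states_t\asa x\in\states_s$.

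The only real work, and the single place to be careful, is confirming that each property invoked above genuinely transfers to $\ut_t$---in particular the semigroup identity and constant additivity, both of which follow routinely from conjugacy. I do not anticipate a genuine obstacle here: once these are in place, the combinatorial core of Lemma~\ref{lemma:constantsignlower} applies unchanged. An alternative, equivalent route would be to rephrase the claim for $g\coloneqq-f$ as a ``staying strictly below $\max g$'' statement for $\lt_t$, dual to Lemma~\ref{lemma:constantsignlower}; but since proving that dual would require re-running the same argument with $\max$ in place of $\min$, the direct mirror above is the cleaner option.
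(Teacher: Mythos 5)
Your proposal is correct and follows essentially the same route as the paper: the paper's proof of this lemma defines the same sets $\states_\tau$, invokes Lemma~\ref{lemma:increasingupper} for monotonicity in $\tau$, reuses the finiteness argument from Lemma~\ref{lemma:constantsignlower} verbatim to obtain the constant initial interval, and then runs the same $\lambda$-bound and iteration with $\ut_{\nicefrac{\tau}{n}}$ via conjugacy, the semigroup property, constant additivity, monotonicity and homogeneity. Nothing further is needed.
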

\begin{proof}
For any $\tau\geq0$, let
\begin{equation}\label{eq:def:statestau:upper}
\states_\tau\coloneqq\{y\in\states\colon \ut_\tau f(y)>\min f\}.
\end{equation}
It then follows from Lemma~\ref{lemma:increasingupper} that $\states_\tau$ is an increasing function of $\tau$:
\begin{equation}\label{eq:increasingfunctionupper}
\tau\leq\tau'~\then~\states_{\tau}\subseteq\states_{\tau'}.
\end{equation}
Using an argument that is identical to that in Lemma~\ref{lemma:constantsignlower}, we find that this implies that there is some $\tau^*>0$ and $\states^*\subseteq\states$ such that
\begin{equation}\label{eq:initialconstant:upper}
(\forall\tau\in(0,\tau^*])~\states_\tau=\states^*.
\end{equation}

Fix any $\tau>\tau^*$ and choose $n\in\nats$ high enough such that $2\nicefrac{\tau}{n}\leq\tau^*$.
It then follows from Equation~\eqref{eq:initialconstant:upper} that $\states_{\nicefrac{\tau}{n}}=\states_{2\nicefrac{\tau}{n}}=\states^*$. 
Furthermore, because of Proposition~\ref{prop:islowertransoperator},~\ref{L:bounds} and Equation~\eqref{eq:def:statestau:upper}, we know that $\ut_{\nicefrac{\tau}{n}}f(y)=\ut_{2\nicefrac{\tau}{n}}f(y)=\min f$ for all $y\in\states\setminus\states^*$.
Therefore, we infer from Equation~\eqref{eq:def:statestau:upper} that there is some $\lambda>0$ such that
\begin{equation*}
\ut_{2\nicefrac{\tau}{n}}f-\min f\leq\lambda(\ut_{\nicefrac{\tau}{n}}f-\min f),
\end{equation*}
which, because of Equations~\eqref{eq:Tconjugacy} and~\eqref{eq:decomposition}, Proposition~\ref{prop:islowertransoperator} and \ref{L:constantadditivity} implies that
\begin{equation*}
\ut_{\nicefrac{\tau}{n}}^2(f-\min f)
=\ut_{2\nicefrac{\tau}{n}}(f-\min f)
=\ut_{2\nicefrac{\tau}{n}}f-\min f
\leq\lambda(\ut_{\nicefrac{\tau}{n}}f-\min f).
\end{equation*}
Hence, it follows from Proposition~\ref{prop:islowertransoperator}, \ref{L:constantadditivity}, Equations~\eqref{eq:Tconjugacy} and~\eqref{eq:decomposition}, \ref{L:monotone} and~\ref{L:homo} that
\begin{equation}\label{eq:constantinequality:upper}
\ut_\tau f-\min f
=\ut_\tau (f-\min f)
=\ut_{\nicefrac{\tau}{n}}^n(f-\min f)
\leq\lambda^{n-1}(\ut_{\nicefrac{\tau}{n}}f-\min f).
\end{equation}
Consider now any $y\in\states\setminus\states^*$. Since $\ut_{\nicefrac{\tau}{n}}f(y)=\min f$, it follows from Equation~\eqref{eq:constantinequality:upper} that $\ut_{\tau}f(y)\leq\min f$, which in turn implies that $y\notin\states_\tau$. Since this holds for all $y\in\states\setminus\states^*$, we find that $\states_\tau\subseteq\states^*=\states_{\nicefrac{\tau}{n}}$. Furthermore, since $\nicefrac{\tau}{n}\leq\tau$, it follows from Equation~\eqref{eq:increasingfunctionupper} that $\states_{\nicefrac{\tau}{n}}\subseteq\states_\tau$. Hence, we find that $\states_\tau=\states^*$. Since this is true for all $\tau>\tau^*$, it follows from Equation~\eqref{eq:initialconstant:upper} that
\begin{equation*}
\states_\tau=\states^*
\text{ for all $\tau>0$}.
\end{equation*}
Therefore, due to Equation~\eqref{eq:def:statestau:upper}, we find that
\begin{equation*}
\ut_tf(x)>\min f
\asa x\in\states_t
\asa x\in\states_s
\asa\ut_sf(x)>\min f.
\end{equation*}
\end{proof}


\begin{proposition}\label{prop:minmaxproperty}
Let $\lrate$ be a lower transition rate operator. Then for all $f\in\gamblesX$, $x\in\states$ and $t,s>0$:
\begin{align*}
f(x)>\min f\hspace{1.5pt}~\then~\lt_tf(x)>\min f\hspace{1.5pt}&~\asa~\lt_sf(x)>\min f;\\
f(x)<\max f~\then~\lt_tf(x)<\max f&~\asa~\lt_sf(x)<\max f;\\
f(x)>\min f\hspace{1.5pt}~\then~\ut_tf(x)>\min f\hspace{1.5pt}&~\asa~\ut_sf(x)>\min f;\\
f(x)<\max f~\then~\ut_tf(x)<\max f&~\asa~\ut_sf(x)<\max f;
\end{align*}
\end{proposition}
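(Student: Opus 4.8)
The plan is to observe that the four assertions split into two genuinely distinct ``lower/minimum'' and ``upper/minimum'' statements, together with their ``maximum'' counterparts, which follow by conjugacy. Almost all of the analytic work has already been carried out in the preceding lemmas, so the proof is essentially a matter of assembling them.

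First I would dispatch the first line, concerning $\lt$ and $\min$. Its implication part, $f(x)>\min f\then\lt_tf(x)>\min f$, is exactly Lemma~\ref{lemma:stayingabovetheminimum}, and its equivalence part, $\lt_tf(x)>\min f\asa\lt_sf(x)>\min f$, is exactly Lemma~\ref{lemma:constantsignlower}. For the third line, concerning $\ut$ and $\min$, the equivalence part is Lemma~\ref{lemma:constantsignupper}; for its implication part I would use that $\lt_t$ is a lower transition operator (Proposition~\ref{prop:islowertransoperator}), so that \ref{L:bounds} gives $\ut_tf\geq\lt_tf$, whence $f(x)>\min f$ implies, via Lemma~\ref{lemma:stayingabovetheminimum}, that $\ut_tf(x)\geq\lt_tf(x)>\min f$.

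The two remaining lines---the ``$\max$'' statements---I would obtain from the first two using the conjugacy relation $\ut_\tau f=-\lt_\tau(-f)$ of Equation~\eqref{eq:Tconjugacy} together with $\max f=-\min(-f)$. Writing $g\coloneqq-f$, these identities give, for every $\tau>0$, the equivalences $f(x)<\max f\asa g(x)>\min g$ and $\lt_\tau f(x)<\max f\asa\ut_\tau g(x)>\min g$; substituting these into the second line turns it into precisely the third line applied to $g$. In the same way, using $\lt_\tau g=-\ut_\tau f$, the fourth line becomes the first line applied to $g$. Since the first and third lines will have just been established for all functions in $\gamblesX$, the second and fourth follow at once.

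I expect no serious obstacle here: the only points requiring care are the purely mechanical sign-flips in the conjugacy step---keeping track of which of $\lt$ and $\ut$ survives after negation, and checking that strict inequalities are preserved under $h\mapsto-h$---and the observation that $\ut_t$ dominates $\lt_t$ pointwise, which is what upgrades the lower ``staying above the minimum'' property to the upper one. All of the genuine content (continuity, the differential-inequality argument, and the finiteness-of-$\states$ pigeonhole that forces the relevant sets to stabilise) already lives in Lemmas~\ref{lemma:stayingabovetheminimum}, \ref{lemma:constantsignlower} and~\ref{lemma:constantsignupper}, so this proposition adds only bookkeeping on top of them.
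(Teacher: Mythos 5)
Your proof is correct and follows essentially the same route as the paper's: the first and third lines are assembled from Lemmas~\ref{lemma:stayingabovetheminimum}, \ref{lemma:constantsignlower} and~\ref{lemma:constantsignupper}, and the two ``max'' lines are obtained by the conjugacy substitution $g\coloneqq-f$ exactly as in the paper. The only (immaterial) difference is that for the implication $f(x)>\min f\then\ut_tf(x)>\min f$ the paper invokes Lemma~\ref{lemma:increasingupper} with $s=0$, whereas you use the pointwise domination $\ut_tf\geq\lt_tf$ from \ref{L:bounds} together with Lemma~\ref{lemma:stayingabovetheminimum}; both are valid.
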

\begin{proof}
The first implication [$f(x)>\min f\then\lt_tf(x)>\min f$] follows from Lemma~\ref{lemma:stayingabovetheminimum} and the first equivalence [$\lt_tf(x)>\min f\asa\lt_sf(x)>\min f$] follows from Lemma~\ref{lemma:constantsignlower}. 
Since $\ut_0f=f$, the third implication [$f(x)>\min f\then\ut_tf(x)$] follows from Lemma~\ref{lemma:increasingupper}. The third equivalence [$\ut_tf(x)>\min f\asa\ut_sf(x)>\min f$] follows from Lemma~\ref{lemma:constantsignupper}.
The rest of the result now follows directly because we know from Equation~\eqref{eq:Tconjugacy} that $\ut_tf(x)=-\lt_t(-f)(x)$, $\ut_sf(x)=-\lt_s(-f)(x)$ and $\max f=-\min(-f)$.
\end{proof}

\begin{corollary}\label{corol:constantsignprob}
Let $\lrate$ be a lower transition rate operator. Then for all $A\subseteq\states$, all $x\in\states$ and all $t, s>0$:
\begin{align*}
x\in A~\then~\lt_t\ind{A}(x)>0&~\asa~\lt_s\ind{A}(x)>0;\\
x\notin A~\then~\lt_t\ind{A}(x)<1&~\asa~\lt_s\ind{A}(x)<1;\\
x\in A~\then~\ut_t\ind{A}(x)>0&~\asa~\ut_s\ind{A}(x)>0;\\
x\notin A~\then~\ut_t\ind{A}(x)<1&~\asa~\ut_s\ind{A}(x)<1.
\end{align*}
\end{corollary}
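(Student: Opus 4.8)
The plan is to deduce the four equivalences directly from Proposition~\ref{prop:minmaxproperty}, by specialising the function to the indicator $\ind{A}$ and substituting the values $\min\ind{A}$ and $\max\ind{A}$. The only subtlety is the behaviour at the two degenerate sets $A=\emptyset$ and $A=\states$, for which the strict inequalities in the hypotheses of Proposition~\ref{prop:minmaxproperty} degenerate, so these must be checked separately.

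First I would dispose of the degenerate cases. Recall from Proposition~\ref{prop:islowertransoperator} that $\lt_t$ is a lower transition operator, so by~\ref{L:bounds} it fixes constant functions, and the same then holds for $\ut_t$ through Equation~\eqref{eq:Tconjugacy}. If $A=\emptyset$, then $\ind{A}=0$ and hence $\lt_t\ind{A}(x)=\ut_t\ind{A}(x)=0$ for all $x$ and $t$; the first and third equivalences are then vacuous, while the second and fourth hold because both of their sides assert $0<1$. Dually, if $A=\states$, then $\ind{A}=1$ and hence $\lt_t\ind{A}(x)=\ut_t\ind{A}(x)=1$; now the second and fourth equivalences are vacuous and the first and third hold because both sides assert $1>0$.

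For the remaining case $\emptyset\neq A\subsetneq\states$, I would note that $\min\ind{A}=0$ and $\max\ind{A}=1$, and then apply Proposition~\ref{prop:minmaxproperty} with $f\coloneqq\ind{A}$. If $x\in A$, then $\ind{A}(x)=1>0=\min\ind{A}$, so the first line of the proposition yields $\lt_t\ind{A}(x)>0\asa\lt_s\ind{A}(x)>0$, which is exactly the first claim; the third claim follows identically from the third line of the proposition. If instead $x\notin A$, then $\ind{A}(x)=0<1=\max\ind{A}$, so the second line of the proposition gives $\lt_t\ind{A}(x)<1\asa\lt_s\ind{A}(x)<1$, which is the second claim, and the fourth claim follows from the fourth line in the same way.

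I do not expect a genuine obstacle: all of the mathematical content is already contained in Proposition~\ref{prop:minmaxproperty}, and the proof is essentially a translation exercise. The one point to remain vigilant about is that the hypotheses $f(x)>\min f$ and $f(x)<\max f$ correspond exactly to the conditions $x\in A$ and $x\notin A$ only because $\min\ind{A}=0$ and $\max\ind{A}=1$ for a nonempty proper subset $A$, which is precisely why the cases $A=\emptyset$ and $A=\states$ require the separate, elementary treatment above.
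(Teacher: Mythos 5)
Your proposal is correct and follows exactly the paper's own route: dispose of $A=\emptyset$ and $A=\states$ via Proposition~\ref{prop:islowertransoperator} and \ref{L:bounds}, then apply Proposition~\ref{prop:minmaxproperty} with $f=\ind{A}$, using $\min\ind{A}=0$ and $\max\ind{A}=1$. Your write-up simply makes explicit the details the paper leaves as ``trivial'' and ``direct''.
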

\begin{proof}
If $A=\emptyset$ or $A=\states$, the result follows trivially from Proposition~\ref{prop:islowertransoperator} and \ref{L:bounds}. In all other cases, the result follows directly from Proposition~\ref{prop:minmaxproperty}, with \mbox{$f=\ind{A}$}.
\end{proof}

\begin{proof}[Proof of Proposition~\ref{prop:regulariffonestep}]
If $t=0$, we know from Proposition \ref{prop:operatordifferential} that $\lt_t=I$, which implies that $\lt_t^n=I=\lt_t$ and $\ut_t^n=I=\ut_t$ for all $n\in\nats$. In that case, Definitions~\ref{def:regularlyabsorbing} and~\ref{def:onestepabsorbing} are trivially equal.
If $t>0$, then since we know from Equations~\eqref{eq:decomposition} and~\eqref{eq:Tconjugacy} that $\lt_t^n=\lt_{nt}$ and $\ut_t^n=\ut_{nt}$ for all $n\in\nats$, the equivalence of Definitions~\ref{def:regularlyabsorbing} and~\ref{def:onestepabsorbing} follows directly from Corollary~\ref{corol:constantsignprob}.
\end{proof}

\begin{proof}[Proof of Corollary~\ref{corol:ergodiciffdiscrete1stepabsorbing}]
This result is a trivial consequence of Corollary~\ref{corol:ergodiciffdiscreteregularlyabsorbing} and Proposition~\ref{prop:regulariffonestep}.
\end{proof}

\begin{proof}[Proof of Proposition~\ref{prop:iffuperreachable}]
First assume that $\ut_t\ind{x}(y)>0$. It then follows from Proposition~\ref{prop:approximation} and Equations~\eqref{eq:Tconjugacy} and~\eqref{eq:Qconjugacy} that there is some $n\in\nats$ such that $n\geq t\norm{\lrate}$ and
\begin{equation}\label{eq:theproductispositive}
\Big((I+\frac{t}{n}\urate)^n\ind{x}\Big)(y)
>0.
\end{equation}
Let $\Delta\coloneqq\nicefrac{t}{n}\geq0$ and define $\lt_*\coloneqq I+\Delta\lrate$. Since $n\geq t\norm{\lrate}$ implies that $\Delta\norm{\lrate}\leq1$, it then follows from Proposition~\ref{prop:fromQtoL} that $\lt_*$ is a lower transition operator.
Therefore, for all $z\in\states$ and $w\in\states$, it follows from~\ref{L:bounds} that $c(w,z)\coloneqq\big(\ut_*\ind{z}\big)(w)\geq0$. For all $z\in\states$, we now have that
\begin{align*}
\ut_*\ind{z}
=\sum_{w\in\states}\ind{w}\cdot\big(\ut_*\ind{z}\big)(w)
=\sum_{w\in\states}c(w,z)\ind{w}.
\end{align*}
Hence, for all $x_n\in\states$, it follows from Equation~\eqref{eq:Tconjugacy}, \ref{L:subadditivity} and~\ref{L:monotone} that
\begin{align*}
\ut_*^n\ind{x_n}
=\ut_*^{n-1}\ut_*\ind{x_n}
&=
\ut_*^{n-1}\sum_{x_{n-1}\in\states}c(x_{n-1},x_n)\ind{x_{n-1}}
\leq
\sum_{x_{n-1}\in\states}c(x_{n-1},x_n)\ut_*^{n-1}\ind{x_{n-1}}
\end{align*}
and, by continuing in this way, that
\begin{align*}
\ut_*^n\ind{x_n}
\leq
\sum_{x_{n-1}\in\states}c(x_{n-1},x_n)
\sum_{x_{n-2}\in\states}c(x_{n-2},x_{n-1})
\cdots
\sum_{x_1\in\states}c(x_1,x_2)\ut_*\ind{x_1}.
\end{align*}
Therefore, for all $x_n\in\states$ and $x_0\in\states$, we find that
\begin{align*}
\big(\ut_*^n\ind{x_n}\big)(x_0)
&\leq
\sum_{x_{n-1}\in\states}c(x_{n-1},x_n)
\sum_{x_{n-2}\in\states}c(x_{n-2},x_{n-1})
\cdots
\sum_{x_1\in\states}c(x_1,x_2)c(x_0,x_1).
\end{align*}
Hence, if we let $x_0\coloneqq y$ and $x_n\coloneqq x$, it follows from Equation~\eqref{eq:theproductispositive} that
\begin{align*}
\sum_{x_{n-1}\in\states}c(x_{n-1},x_n)
\sum_{x_{n-2}\in\states}c(x_{n-2},x_{n-1})
\cdots
\sum_{x_1\in\states}c(x_1,x_2)c(x_0,x_1)
>0.
\end{align*}
This implies that there is some sequence $y=x_0,x_1,\dots,x_n=x$ such that
\begin{align*}
c(x_{n-1},x_n)
c(x_{n-2},x_{n-1})
\cdots
c(x_1,x_2)c(x_0,x_1)
>0.
\end{align*}
Since each of the factors in this product is non-negative, it follows that $c(x_{k-1},x_{k})>0$ for all $k\in\{1,\dots,n\}$. Therefore, for any $k\in\{1,\dots,n\}$ such that $x_k\neq x_{k-1}$, it follows that
\begin{align*}
\urate(\ind{x_k})(x_{k-1})
&=\frac{1}{\Delta}\Big(\ind{x_k}(x_{k-1})+\Delta\urate(\ind{x_k})(x_{k-1})\Big)\\
&=\frac{1}{\Delta}\Big((\ind{x_k}+\Delta\urate(\ind{x_k}))(x_{k-1})\Big)\\
&=\frac{1}{\Delta}\Big(\big((I+\Delta\urate)\ind{x_k}\big)(x_{k-1})\Big)
=\frac{1}{\Delta}\Big(\big(\ut_*\ind{x_k}\big)(x_{k-1})\Big)
=\frac{1}{\Delta}c(x_{k-1},x_k)>0.
\end{align*}
If $x_k\neq x_{k-1}$ for all $k\in\{1,\dots,n\}$, this implies that $x$ is upper reachable from $y$. 
Otherwise, let $x'_0,\dots,x'_m$ be a new sequence, obtained by removing from $x_0,\dots,x_n$ those elements $x_k$ for which $x_k=x_{k-1}$; $n-m$ is the number of elements that is removed. 
Then $x'_0=y$, $x'_m=x$ and, for all $k\in\{1,\dots,m\}$, we have that $x'_k\neq x'_{k-1}$ and $\urate(\ind{x'_k})(x'_{k-1})>0$. Therefore, $x$ is upper reachable from $y$.

Conversely, assume that $x$ is upper reachable from $y$, meaning that there is some sequence $y=x_0,x_1,\dots,x_n=x$ such that, for all $k\in\{1,\dots,n\}$, $x_k\neq x_{k-1}$ and $\urate\ind{x_k}(x_{k-1})>0$. 
If $n=0$, then $x=y$ and therefore, it follows from Corollary~\ref{corol:constantsignprob} that $\ut_t\ind{x}(y)>0$.
Hence, for the remainder of this proof, we may assume that $n\geq 1$.
Fix any $k\in\{1,\dots,n\}$. We then have that $\ut_0 \ind{x_k}(x_{k-1})=\ind{x_k}(x_{k-1})=0$ and
\begin{align*}
\frac{d}{ds}\ut_s \ind{x_k}(x_{k-1})\Big\vert_{s=0}
&=-\frac{d}{ds}\lt_s(-\ind{x_k})(x_{k-1})\Big\vert_{s=0}\\
&=-\lrate(\lt_0(-\ind{x_k}))(x_{k-1})
=-\lrate(-\ind{x_k})(x_{k-1})
=\urate(\ind{x_k})(x_{k-1})>0,
\end{align*}
where the first equality follows from Equation~\eqref{eq:Tconjugacy}, the second equality follows from Equation~\eqref{eq:differential} and the last equality follows from Equation~\eqref{eq:Qconjugacy}.
Therefore, there is some $\epsilon_k>0$ such that $\ut_{\epsilon_k}\ind{x_k}(x_{k-1})>0$. Consequently, if we let $c_k\coloneqq\ut_{\epsilon_k}\ind{x_k}(x_{k-1})>0$, then because it follows from Proposition~\ref{prop:islowertransoperator} and~\ref{L:bounds} that $\ut_{\epsilon_k}\ind{x_k}\geq0$, we have that $\ut_{\epsilon_k}\ind{x_k}\geq c_k\ind{x_{k-1}}$.
Let $\epsilon\coloneqq\sum_{k=1}^{n}\epsilon_k>0$. Then
\begin{equation*}
\ut_\epsilon\ind{x_n}=\ut_{\epsilon_1}\cdots\ut_{\epsilon_{n-1}}\ut_{\epsilon_n}\ind{x_n}
\geq c_n\ut_{\epsilon_1}\cdots\ut_{\epsilon_{n-1}}\ind{x_{n-1}}
\geq \ldots
\geq \left(\prod_{k=1}^n c_k\right) \ind{x_0},
\end{equation*}
where the equality follows from Equations~\eqref{eq:Tconjugacy} and~\eqref{eq:decomposition} and where the inequalities follow from Proposition~\ref{prop:islowertransoperator}, \ref{L:monotone}, \ref{L:homo} and Equation~\eqref{eq:Tconjugacy}. Therefore, we find that
\begin{equation*}
\ut_\epsilon\ind{x}(y)=\ut_\epsilon\ind{x_n}(x_0)
\geq \prod_{k=1}^n c_k \ind{x_0}(x_0)
=\prod_{k=1}^n c_k>0,
\end{equation*}
which implies that $\ut_t\ind{x}(y)>0$ because of Corollary~\ref{corol:constantsignprob}.
\end{proof}

\begin{proof}[Proof of Proposition~\ref{prop:ifflowerreachable}]
Let $\{A_k\}_{k\in\natswith}$ and $n$ be defined as in Definition~\ref{def:lowerreachability}. We need to prove that $x\in A_n$ if and only if $\lt_t\ind{A}(x)>0$.

First assume that $\lt_t\ind{A}(x)>0$. It then follows from Proposition~\ref{prop:approximation} that there is some $m\in\nats$ such that $m\geq n$, $m\geq t\norm{\lrate}$ and
\begin{equation}\label{eq:theproductispositive:lower}
\Big((I+\frac{t}{m}\lrate)^m\ind{A}\Big)(x)
>0.
\end{equation}
Let $\Delta\coloneqq\nicefrac{t}{m}\geq0$ and define $\lt_*\coloneqq I+\Delta\lrate$. Since $m\geq t\norm{\lrate}$ implies that $\Delta\norm{\lrate}\leq1$, it then follows from Proposition~\ref{prop:fromQtoL} that $\lt_*$ is a lower transition operator.
Consider any $k\in\natswith$ such that $k\leq m$ and any $y\in\states\setminus A_{k+1}$. Since $A_k\subseteq A_{k+1}$, this implies that $y\notin A_k$. Assume \emph{ex absurdo} that $\lrate(\ind{A_k})(y)>0$. It then follows from Equation~\eqref{eq:lowerreachability} that $y\in A_{k+1}$, a contradiction. Hence, we find that $\lrate(\ind{A_k})(y)\leq0$. Since $y\notin A_k$, it follows from \ref{LR:subadditive} and~\ref{LR:nondiagpositive} that $\lrate(\ind{A_k})(y)\geq\sum_{z\in A_k}\lrate(\ind{z})(y)\geq0$. Hence, we infer that $\lrate(\ind{A_k})(y)=0$. Furthermore, since $y\notin A_k$, we als have that $\ind{A_k}(y)=0$. Hence, we find that $(\lt_*\ind{A_k})(y)=((I+\Delta\lrate)\ind{A_k})(y)=0$. Since this holds for all $y\in\states\setminus A_{k+1}$, there is some $c_k>0$ such that $\lt_*\ind{A_k}\leq c_k\ind{A_{k+1}}$. Due to \ref{L:homo} and \ref{L:monotone}, this implies that $\lt_*^{m-k}\ind{A_k}\leq c_k\lt_*^{m-k-1}\ind{A_{k+1}}$. Since this holds for all $k\in\natswith$ such that $k\leq m$, we find that
\begin{equation*}
\lt_*^m\ind{A_0}
\leq c_0\lt_*^{m-1}\ind{A_1}
\leq c_0c_1\lt_*^{m-2}\ind{A_2}
\leq\dots
\leq c_0c_1\dots c_{m-1}\ind{A_m}.
\end{equation*}
Therefore, since $A_0=A$, it follows from Equation~\eqref{eq:theproductispositive:lower} that $x\in A_m$. Since $A_n=A_{n+1}$, it follows from Equation~\eqref{eq:lowerreachability} that $A_r=A_n$ for all $r\geq n$ and therefore, in particular, that $A_m=A_n$. Since $x\in A_m$, this implies that $x\in A_n$.

Conversely, assume that $x\in A_n$.
If $n=0$, then $A_n=A_0=A$ and therefore $x\in A$, which, due to Corollary~\ref{corol:constantsignprob}, implies that $\lt_t\ind{A}(x)>0$. Therefore, for the remainder of this proof, we may assume that $n\geq 1$.
Fix any $k\in\{0,\dots,n-1\}$. Consider any $y\in A_{k+1}\setminus A_k$. Then $\lt_0\ind{A_k}(y)=\ind{A_k}(y)=0$ and
\begin{equation*}
\frac{d}{ds}\lt_s \ind{A_k}(y)\Big\vert_{s=0}
=\lrate(\lt_0(\ind{A_k}))(y)
=\lrate(\ind{A_k})(y)>0,
\end{equation*}
where the first equality follows from Equation~\eqref{eq:differential} and the inequality follows from Equation~\eqref{eq:lowerreachability}. Therefore, there is some $\epsilon_{k,y}>0$ such that $\lt_s\ind{A_k}(y)>0$ for all $s\in(0,\epsilon_{k,y}]$. Hence, if we let $\epsilon_k\coloneqq\min_{y\in A_{k+1}\setminus A_k}\epsilon_{k,y}$, then $\lt_{\epsilon_k}\ind{A_k}(y)>0$ for all $y\in A_{k+1}\setminus A_k$. For all $y\in A_k$, it follows from Corollary~\ref{corol:constantsignprob} that $\lt_{\epsilon_k}\ind{A_k}(y)>0$. Hence, in summary, we have that $\lt_{\epsilon_k}\ind{A_k}(y)>0$ for all $y\in A_{k+1}$. Since we know from Proposition~\ref{prop:islowertransoperator} and \ref{L:bound} that $\lt_{\epsilon_k}\ind{A_k}\geq0$, this implies that there is some $c_k>0$ such that $\lt_{\epsilon_k}\ind{A_k}\geq c_k\ind{A_{k+1}}$. Let $\epsilon\coloneqq\sum_{k=0}^{n-1}\epsilon_k$. Then
\begin{equation*}
\lt_\epsilon\ind{A_0}=\lt_{\epsilon_{n-1}}\cdots\lt_{\epsilon_{1}}\lt_{\epsilon_0}\ind{A_0}
\geq c_0\lt_{\epsilon_{n-1}}\cdots\lt_{\epsilon_{1}}\ind{A_1}
\geq \ldots
\geq \left(\prod_{k=0}^{n-1} c_k\right) \ind{A_{n}},
\end{equation*}
where the equality follows from Equation~\eqref{eq:decomposition} and the inequalities follow from Proposition~\ref{prop:islowertransoperator}, \ref{L:homo} and~\ref{L:monotone}. Therefore, we find that
\begin{equation*}
\lt_\epsilon\ind{A}(x)=\lt_\epsilon\ind{A_0}(x)
\geq \left(\prod_{k=0}^{n-1} c_k\right) \ind{A_n}(x)
=\prod_{k=0}^{n-1}c_k>0,
\end{equation*}
which implies that $\lt_t\ind{A}(x)>0$ because of Corollary~\ref{corol:constantsignprob}.
\end{proof}

\begin{proof}[Proof of Theorem~\ref{theo:finalequivalence}]
Fix any $t>0$. It then follows from Corollary~\ref{corol:ergodiciffdiscrete1stepabsorbing} that $\lrate$ is ergodic if and only if $\lt_t$ is $1$-step absorbing or, equivalently, because of Definition~\ref{def:onestepabsorbing}, if
\begin{equation*}
\mathcal{X}_{\mathrm{1A}}\coloneqq
\{
x\in\states\colon
\min\ut_t\ind{x}>0
\}\neq\emptyset
\text{~~and~~}
(\forall x\in\states\setminus\mathcal{X}_{\mathrm{1A}})~
\lt_t\ind{\mathcal{X}_{\mathrm{1A}}}(x)>0.
\end{equation*}
The result now follows immediately because we know from Proposition~\ref{prop:iffuperreachable} that, for all $x\in\states$,
\begin{equation*}
\min\ut_t\ind{x}>0
~\Leftrightarrow~
(\forall y\in\states)~\ut_t\ind{x}(y)>0
~\Leftrightarrow~
(\forall y\in\states)~y\dotabovearrow x,
\end{equation*}
and because we know from Proposition~\ref{prop:ifflowerreachable} that, for all $x\in\states\setminus\mathcal{X}_{\mathrm{1A}}$, $\lt_t\ind{\mathcal{X}_{\mathrm{1A}}}(x)>0$ if and only if $x\dotbelowarrow \mathcal{X}_{\mathrm{1A}}$.
\end{proof}

\end{document}